\newtheorem{theorem}{Theorem}
\newtheorem{lemma}[theorem]{Lemma}
\newtheorem{remark}[theorem]{Remark}
\newtheorem{definition}[theorem]{Definition}
\newenvironment{Proof}[1][Proof.]{\begin{trivlist}
\item[\hskip \labelsep {\bfseries #1}]}{\flushright
$\Box$\end{trivlist}}
\begin{document}
\noindent{\Large 
Central extensions of $3$-dimensional Zinbiel algebras}
\footnote{
This work was started during the research stay of I. Kaygorodov at the Department of Mathematics, parcially funded by the {\it Coloquio de Matem\'atica} (CR 4430) of the University of Antofagasta. 
The work was supported by 
RFBR 20-01-00030;  FAPESP 18/15627-2, 19/03655-4; CNPq 302980/2019-9;
AP08052405 of MES RK. \\
 
}
 
   \medskip

   {\bf  
   Mar\'ia Alejandra Alvarez$^{a}$, 
   Thiago Castilho de Mello$^{b}$ \&
    Ivan Kaygorodov$^{c}$ 
   
    \medskip
}

{\tiny

$^{a}$ Departamento de Matem\'aticas, Facultad de Ciencias B\'asicas, Universidad de Antofagasta, Chile.

$^{b}$ Instituto de Ciência e Tecnologia, Universidade Federal de São Paulo, São José dos Campos, Brasil. 

$^{c}$ CMCC, Universidade Federal do ABC. Santo Andr\'e, Brasil.

\

   \medskip

   E-mail addresses:

\smallskip   
    Mar\'ia Alejandra Alvarez (maria.alvarez@uantof.cl),

\smallskip    
Thiago Castillo de Mello (tcmello@unifesp.br),

\smallskip
    Ivan Kaygorodov (kaygorodov.ivan@gmail.com).

}

\medskip

\noindent {\bf Abstract:}
We  describe all central extensions of all $3$-dimensional non-zero complex Zinbiel algebras.
As a corollary, we have a full classification of $4$-dimensional non-trivial complex Zinbiel algebras
and a full classification of 
 $5$-dimensional non-trivial complex Zinbiel algebras with $2$-dimensional annihilator,
 which gives the principal step in the algebraic classification of $5$-dimensional Zinbiel algebras.

   \medskip

\noindent {\bf Keywords:} Central Extensions; Zinbiel Algebras; Classification 

   \medskip

\noindent{}{\bf 2020 MSC:} 17A30

   \medskip

   \medskip
   \medskip

\section*{Introduction}
	Algebraic classification (up to isomorphism) of $n$-dimensional algebras from a certain variety
	defined by some family of polynomial identities is a classical problem in the theory of non-associative algebras.
	There are many results related to algebraic classification of small dimensional algebras in varieties of 
	Jordan, Lie, Leibniz, Zinbiel and many other algebras \cite{ack, cfk20, kkp20, degr3, usefi1, degr1, degr2, ha16, krs19, klp20, kks19, kkl19, hac18}.
	
	An algebra $\bf A$ is called a {\it Zinbiel algebra} if it satisfies the identity 
	$(xy)z=x(yz+zy).$
	Zinbiel algebras were introduced by Loday in \cite{loday} and studied in \cite{abash,adashev19, cam13,mukh, dok,  anau, saha, dzhuma, ckkk20, dzhuma5, dzhuma19, kppv, ualbay, yau}. Under the Koszul duality, the operad of Zinbiel algebras is dual to the operad of Leibniz algebras. Zinbiel algebras are also related to Tortkara algebras \cite{dzhuma} and Tortkara triple systems \cite{brem}. 
	More precisely, every Zinbiel algebra with the commutator multiplication gives a Tortkara algebra (also about Tortkara algebras, see, \cite{gkp,gkk}). 
	Tortkara algebras have recently sprung up in unexpected areas of mathematics \cite{tortnew1,tortnew2}.

	Central extensions play an important role in quantum mechanics: one of the earlier
	encounters is by means of Wigner's theorem which states that a symmetry of a quantum
	mechanical system determines an (anti-)unitary transformation of a Hilbert space.
	Another area of physics where one encounters central extensions is the quantum theory
	of conserved currents of a Lagrangian. These currents span an algebra which is closely
	related to so called affine Kac-Moody algebras, which are universal central extensions
	of loop algebras.
	Central extensions are needed in physics, because the symmetry group of a quantized
	system usually is a central extension of the classical symmetry group, and in the same way
	the corresponding symmetry Lie algebra of the quantum system is, in general, a central
	extension of the classical symmetry algebra. Kac-Moody algebras have been conjectured
	to be a symmetry group of a unified superstring theory. The centrally extended Lie
	algebras play a dominant role in quantum field theory, particularly in conformal field
	theory, string theory and in $M$-theory.
	In the theory of Lie groups, Lie algebras and their representations, a Lie algebra extension
	is an enlargement of a given Lie algebra $g$ by another Lie algebra $h.$ Extensions
	arise in several ways. There is a trivial extension obtained by taking a direct sum of
	two Lie algebras. Other types are a split extension and a central extension. Extensions may
	arise naturally, for instance, when forming a Lie algebra from projective group representations.
	A central extension and an extension by a derivation of a polynomial loop algebra
	over finite-dimensional simple Lie algebra gives a Lie algebra which is isomorphic to a
	non-twisted affine Kac-Moody algebra \cite[Chapter 19]{bkk}. Using the centrally extended loop
	algebra one may construct a current algebra in two spacetime dimensions. The Virasoro
	algebra is the universal central extension of the Witt algebra, the Heisenberg algebra is
	the central extension of a commutative Lie algebra  \cite[Chapter 18]{bkk}.
	
	The algebraic study of central extensions of Lie and non-Lie algebras has a very long history \cite{omirov,ha17,ha16nm,hac16,kkl18,ss78,cam19,cam20, zusmanovich}.
	For example, all central extensions of some filiform Leibniz algebras were classified in \cite{omirov}
	and all central extensions of filiform associative algebras were classified in \cite{kkl18}.
	Skjelbred and Sund used central extensions of Lie algebras for a classification of low dimensional nilpotent Lie algebras  \cite{ss78}.
	After that, the method introduced by Skjelbred and Sund was used to 
	describe all non-Lie central extensions of all $4$-dimensional Malcev algebras \cite{hac16},
	all non-associative central extensions of $3$-dimensional Jordan algebras \cite{ha17},
	all anticommutative central extensions of $3$-dimensional anticommutative algebras \cite{cfk182},
	all central extensions   of $2$-dimensional algebras \cite{cfk18}.
	Note that the method of central extensions is an important tool in the classification of nilpotent algebras.
	It was used to describe
	all $4$-dimensional nilpotent associative algebras \cite{degr1},
	all $4$-dimensional nilpotent assosymmetric  algebras \cite{ikm19},
	all $4$-dimensional nilpotent bicommutative algebras \cite{kpv19},
	all $4$-dimensional nilpotent Novikov  algebras \cite{kkk18},
	all $4$-dimensional commutative algebras \cite{fkkv19},
	all $5$-dimensional nilpotent Jordan algebras \cite{ha16},
	all $5$-dimensional nilpotent restricted Lie algebras \cite{usefi1},
	all $5$-dimensional anticommutative algebras \cite{fkkv19},
	all $6$-dimensional nilpotent Lie algebras \cite{degr3,degr2},
	all $6$-dimensional nilpotent Malcev algebras \cite{hac18},
	all $6$-dimensional nilpotent binary Lie algebras\cite{ack},  
	all $6$-dimensional nilpotent anticommutative $\mathfrak{CD}$-algebras \cite{ack},
	all $6$-dimensional nilpotent Tortkara algebras\cite{gkk},  
	and some other.

\ 

\paragraph{\bf Motivation and contextualization} 
Given algebras ${\bf A}$ and ${\bf B}$ in the same variety, we write ${\bf A}\to {\bf B}$ and say that ${\bf A}$ {\it degenerates} to ${\bf B}$, or that ${\bf A}$ is a {\it deformation} of ${\bf B}$, if ${\bf B}$ is in the Zariski closure of the orbit of ${\bf A}$ (under the aforementioned base-change action of the general linear group). The study of degenerations of algebras is very rich and closely related to deformation theory, in the sense of Gerstenhaber. It offers an insightful geometric perspective on the subject and has been the object of a lot of research.
In particular, there are many results concerning degenerations of algebras of small dimensions in a  variety defined by a set of identities.
One of the main problems of the {\it geometric classification} of a variety of algebras is a description of its irreducible components. In the case of finitely-many orbits (i.e., isomorphism classes), the irreducible components are determined by the rigid algebras --- algebras whose orbit closure is an irreducible component of the variety under consideration. 

Observe that the algebraic classification of central extensions of $3$-dimensional Zinbiel algebras gives 
the description of all Zinbiel algebras with $(n-3)$-dimensional annihilator (thanks to \cite{dzhuma5} we know that every finite-dimensional Zinbiel algebra is nilpotent), 
which is a principal step in the classification of all $n$-dimensional Zinbiel algebras.
However, an algebraic classification of $n$-dimensional Zinbiel algebras gives a way for obtaining the geometric classification of  all
complex $n$-dimensional Zinbiel algebras.

\section{Preliminaries}
\subsection{Previous definitions and methods}
During this paper, we are using the notations and methods well written in \cite{ha17,hac16,cfk18}
and adapted for the Zinbiel case with some modifications. 
From now, we will give only some important definitions.

Let $({\bf A}, \cdot)$ be a Zinbiel algebra over  $\mathbb{C}$  and let $\mathbb V$ be a vector space over $\mathbb{C}.$ Then the $\mathbb{C}$-linear space ${\rm Z}^2\left(
\bf A,\mathbb V \right) $ is defined as the set of all  bilinear maps $%
\theta :{\bf A} \times {\bf A} \longrightarrow {\mathbb V},$
such that \[\theta(xy,z)=\theta(x,yz+zy).\]
 Its elements will be called \textit{cocycles}. For a
linear map $f$ from $\bf A$ to  $\mathbb V$, if we write $\delta f\colon {\bf A} \times
{\bf A} \longrightarrow {\mathbb V}$ by $\delta f \left( x,y \right) =f(xy )$, then $%
\delta f\in {\rm Z}^2\left( {\bf A},{\mathbb V} \right) $. We define ${\rm B}^2\left(
{\bf A},{\mathbb V}\right) =\left\{ \theta =\delta f\ :f\in {\rm Hom}\left( {\bf A},{\mathbb V}\right) \right\}$.
One can easily check that ${\rm B}^2(\bf A,\mathbb V)$ is a linear subspace of $%
{\rm Z}^2\left( {\bf A},{\mathbb V}\right) $ whose elements are called \textit{%
coboundaries}. We define the \textit{second cohomology space} $%
{\rm H}^2\left( {\bf A},{\mathbb V}\right) $ as the quotient space ${\rm Z}^2%
\left( {\bf A},{\mathbb V}\right) \big/{\rm B}^2\left( {\bf A},{\mathbb V}\right).$

Let ${\rm Aut}\left( {\bf A} \right) $ be the automorphism group of the Zinbiel  algebra ${\bf A} $ and let $\phi \in {\rm Aut}\left( {\bf A}\right) $. For $\theta \in
{\rm Z}^2\left( {\bf A},{\mathbb V}\right) $ define $\phi \theta \left( x,y\right)
=\theta \left( \phi \left( x\right) ,\phi \left( y\right) \right) $. Then $%
\phi \theta \in {\rm Z}^2\left( {\bf A},{\mathbb V}\right) $. So, ${\rm Aut}\left( {\bf A}\right)$
acts on ${\rm Z}^2\left( {\bf A},{\mathbb V}\right) $. It is easy to verify that $%
{\rm B}^2\left( {\bf A},{\mathbb V}\right) $ is invariant under the action of ${\rm Aut}\left(
{\bf A}\right) $ and then we have that ${\rm Aut}\left( {\bf A}\right) $ acts on ${\rm H}^2\left( {\bf A},{\mathbb V}\right) $.

Let $\bf A$ be a Zinbiel  algebra of dimension $m<n$ over $\mathbb{C},$ 
and let ${\mathbb V}$ be a $\mathbb{C}$-vector
space of dimension $n-m$. For any $\theta \in {\rm Z}^2\left({\bf A},{\mathbb V}\right) $ define on the linear space ${\bf A}_{\theta }:={\bf A}\oplus {\mathbb V}$ the
bilinear product \textquotedblleft\ $\left[ -,-\right] _{{\bf A}_{\theta }}$" by $
\left[ x+x^{\prime },y+y^{\prime }\right] _{{\bf A}_{\theta }}=
 xy +\theta \left( x,y\right) $ for all $x,y\in {\bf A},x^{\prime },y^{\prime }\in {\mathbb V}$.
The algebra ${\bf A}_{\theta}$ is a Zinbiel algebra which is called an $(n-m)$%
{\it{-dimensional central extension}} of ${\bf A}$ by ${\mathbb V}$. Indeed, we have, in a straightforward way, that ${\bf A_{\theta}}$ is a Zinbiel algebra if and only if $\theta \in {\rm Z}^2({\bf A}, \mathbb{C})$.

We call the
set ${\rm Ann}(\theta)=\left\{ x\in {\bf A}:\theta \left( x, {\bf A} \right)+ \theta \left({\bf A} ,x\right) =0\right\} $
the {\it{annihilator}} of $\theta $. We recall that the {\it{annihilator}} of an  algebra ${\bf A}$ is defined as
the ideal ${\rm Ann}\left( {\bf A} \right) =\left\{ x\in {\bf A}:  x{\bf A}+ {\bf A}x =0\right\}$ and observe
 that
${\rm Ann}\left( {\bf A}_{\theta }\right) = {\rm Ann}(\theta) \cap {\rm Ann}\left( {\bf A}\right)
 \oplus {\mathbb V}.$

\medskip

We have the next  key result:

\begin{lemma}
Let ${\bf A}$ be an $n$-dimensional Zinbiel algebra such that $\dim({\rm Ann}({\bf A}))=m\neq0$. Then there exist, up to isomorphism, a unique $(n-m)$-dimensional Zinbiel algebra ${\bf A}'$ and a bilinear map $\theta \in {\rm Z}^2({\bf A}, {\mathbb V})$ with ${\rm Ann}({\bf A})\cap {\rm Ann}(\theta)=0$, where $\mathbb V$ is a vector space of dimension m, such that ${\bf A}\cong {\bf A}'_{\theta}$ and $
{\bf A}/{\rm Ann}({\bf A})\cong {\bf A}'$.
\end{lemma}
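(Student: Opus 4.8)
The plan is to mimic the classical Skjelbred--Sund argument in the Zinbiel setting, following the structure already used in the associative, Jordan and anticommutative cases cited in the introduction. First I would pick a complement: since $\mathrm{Ann}({\bf A})$ is an $m$-dimensional ideal, choose a linear subspace $W\subseteq{\bf A}$ with ${\bf A}=W\oplus\mathrm{Ann}({\bf A})$ as vector spaces. The subalgebra structure of $W$ need not be closed, but the quotient ${\bf A}'={\bf A}/\mathrm{Ann}({\bf A})$ is an $(n-m)$-dimensional Zinbiel algebra, and $W$ is linearly isomorphic to ${\bf A}'$; transport the Zinbiel product of ${\bf A}'$ to $W$ via this isomorphism and call the result $({\bf A}',\cdot)$. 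Let $\pi\colon{\bf A}\to{\bf A}'$ be the composition of the projection onto $W$ along $\mathrm{Ann}({\bf A})$ with the identification $W\cong{\bf A}'$, and let $\iota\colon{\bf A}'\to{\bf A}$ be the corresponding linear section (image $=W$), so that $\pi\iota=\mathrm{id}_{{\bf A}'}$.

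Next I would define the cocycle. Set ${\mathbb V}=\mathrm{Ann}({\bf A})$, and for $x,y\in{\bf A}'$ put
\[
\theta(x,y) = \iota(x)\iota(y) - \iota\bigl(\pi(\iota(x)\iota(y))\bigr),
\]
i.e.\ $\theta(x,y)$ is the $\mathrm{Ann}({\bf A})$-component of the product $\iota(x)\iota(y)$ computed in ${\bf A}$. One checks directly that $\theta$ takes values in ${\mathbb V}$ (by construction), that $\theta\in{\rm Z}^2({\bf A}',{\mathbb V})$ (this is exactly the identity $\theta(xy,z)=\theta(x,yz+zy)$, which follows from the Zinbiel identity in ${\bf A}$ after projecting, using that $\mathrm{Ann}({\bf A})$ multiplies trivially and $\pi$ is an algebra homomorphism modulo the annihilator), and that the map ${\bf A}'_\theta\to{\bf A}$, $(x,v)\mapsto\iota(x)+v$, is a Zinbiel algebra isomorphism — it is clearly a linear bijection, and it respects products because $[\iota(x)+v,\iota(y)+w]_{{\bf A}}=\iota(x)\iota(y)=\iota(xy)+\theta(x,y)$, which matches the product in ${\bf A}'_\theta$. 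By construction $\pi$ induces ${\bf A}/\mathrm{Ann}({\bf A})\cong{\bf A}'$. The condition $\mathrm{Ann}({\bf A})\cap\mathrm{Ann}(\theta)=0$ is the genuinely content-bearing point: translating through the isomorphism ${\bf A}\cong{\bf A}'_\theta$ and the formula $\mathrm{Ann}({\bf A}'_\theta)=(\mathrm{Ann}(\theta)\cap\mathrm{Ann}({\bf A}'))\oplus{\mathbb V}$ stated just above, any element of $\mathrm{Ann}(\theta)\cap\iota({\bf A}')$ would contribute to $\mathrm{Ann}({\bf A})$ but lie outside ${\mathbb V}$, contradicting that ${\mathbb V}=\mathrm{Ann}({\bf A})$ is the \emph{whole} annihilator; here I would simply verify that an $x\in{\bf A}'$ with $\theta(x,{\bf A}')+\theta({\bf A}',x)=0$ together with $x\cdot{\bf A}'+{\bf A}'\cdot x=0$ would force $\iota(x)\in\mathrm{Ann}({\bf A})\cap W=0$.

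For uniqueness, suppose ${\bf A}\cong{\bf A}'_\theta\cong{\bf A}''_{\vartheta}$ with both $\theta,\vartheta$ having trivial intersection of their annihilators with the respective algebra annihilators. Then ${\bf A}'\cong{\bf A}/\mathrm{Ann}({\bf A})\cong{\bf A}''$, so we may take ${\bf A}''={\bf A}'$; the remaining freedom is a choice of linear section, and two sections differ by a linear map ${\bf A}'\to\mathrm{Ann}({\bf A})$, which changes $\theta$ exactly by a coboundary $\delta f\in{\rm B}^2({\bf A}',{\mathbb V})$, composed with an automorphism of ${\bf A}'$ and a linear automorphism of ${\mathbb V}$. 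Hence the orbit of $[\theta]$ in ${\rm H}^2({\bf A}',{\mathbb V})$ under $\mathrm{Aut}({\bf A}')\times\mathrm{GL}({\mathbb V})$ is well defined, giving uniqueness up to isomorphism. The main obstacle I anticipate is bookkeeping rather than conceptual: making sure the Zinbiel identity (which is not symmetric in its arguments, unlike the Jacobi or Jordan identities) is handled correctly when verifying $\theta\in{\rm Z}^2$ and when computing annihilators — the asymmetry of $(xy)z=x(yz+zy)$ means one must be careful about which factor carries the annihilator, but no step is more than a routine diagram chase.
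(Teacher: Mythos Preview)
Your proposal is correct and follows essentially the same approach as the paper: choose a linear complement $W$ of $\mathrm{Ann}({\bf A})$, use the projection $P$ (your $\pi\circ\iota^{-1}$) to put a Zinbiel product on $W\cong{\bf A}/\mathrm{Ann}({\bf A})$, and take $\theta(x,y)=xy-[x,y]_{{\bf A}'}$ as the annihilator component of the product. The paper's proof is terser---it simply asserts the cocycle and annihilator conditions once the construction is in place---while you spell out the verification of $\theta\in{\rm Z}^2$ and of $\mathrm{Ann}({\bf A}')\cap\mathrm{Ann}(\theta)=0$ more carefully, and you go further than needed on uniqueness (the lemma only asserts uniqueness of ${\bf A}'$, which follows immediately from ${\bf A}'\cong{\bf A}/\mathrm{Ann}({\bf A})$; your orbit discussion for $\theta$ anticipates the later Lemma~4 but is not required here).
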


\begin{Proof}

Let ${\bf A}'$ be a linear complement of ${\rm Ann}({\bf A})$ in ${\bf A}$. Define a linear map $P: {\bf A} \longrightarrow {\bf A}'$ by $P(x+v)=x$ for $x\in {\bf A}'$ and $v\in {\rm Ann}({\bf A})$ and define a multiplication on ${\bf A}'$ by $[x, y]_{{\bf A}'}=P(x y)$ for $x, y \in {\bf A}'$.
For $x, y \in {\bf A}$ then
$$P(xy)=P((x-P(x)+P(x))(y- P(y)+P(y)))=P(P(x) P(y))=[P(x), P(y)]_{{\bf A}'}$$

Since $P$ is a homomorphism then $P({\bf A})={\bf A}'$ is a Zinbiel algebra and $
{\bf A}/{\rm Ann}({\bf A})\cong {\bf A}'$, which give us the uniqueness. Now, define the map $\theta: {\bf A}'\times{\bf A}'\longrightarrow {\rm Ann}({\bf A})$ by $\theta(x,y)=xy- [x,y]_{{\bf A}'}$. Thus, ${\bf A}'_{\theta}$ is ${\bf A}$ and therefore $\theta \in {\rm Z}^2({\bf A}, {\mathbb V})$ and ${\rm Ann}({\bf A})\cap {\rm Ann}(\theta)=0$.
\end{Proof}

\bigskip

However, in order to solve the isomorphism problem we need to study the
action of ${\rm Aut}\left( {\bf A}\right) $ on ${\rm H}^2\left( {\bf A},\mathbb{C}\right).$ To do that, let us fix a basis for ${\mathbb V}$,  $e_{1},\ldots ,e_{s}$ , and let $
\theta \in {\rm Z}^2\left( {\bf A},{\mathbb V}\right) $. Then $\theta$ can be uniquely
written as $\theta \left( x,y\right) =\underset{i=1}{\overset{s}{\sum }}%
\theta _{i}\left( x,y\right) e_{i}$, where $\theta _{i}\in
{\rm Z}^2\left( {\bf A},\mathbb{C}\right) $. Moreover, ${\rm Ann}(\theta)={\rm Ann}(\theta _{1})\cap {\rm Ann}(\theta _{2})\cap \ldots \cap {\rm Ann}(\theta _{s})$. Further, $\theta \in
{\rm B}^2\left( {\bf A},{\mathbb V}\right) $\ if and only if all $\theta _{i}\in {\rm B}^2\left( {\bf A},%
\mathbb{C}\right) $.

\bigskip

\begin{definition}
Let ${\bf A}$ be an algebra and let $I$ be a subspace of $\operatorname{Ann}({\bf A})$. If ${\bf A}={\bf A}_0 \oplus I$
then $I$ is called an {\it annihilator component} of ${\bf A}$.
\end{definition}
\begin{definition}
A central extension of an algebra $\bf A$ without annihilator component is called a {\it non-split central extension}.
\end{definition}

It is not difficult to prove, (see \cite[%
Lemma 13]{hac16}), that given a Zinbiel algebra ${\bf A}_{\theta}$, if we write as
above $\theta \left( x,y\right) =\underset{i=1}{\overset{s}{\sum }}$
 $\theta_{i}\left( x,y\right) e_{i}\in {\rm Z}^2\left( {\bf A},{\mathbb V}\right) $ and we have
${\rm Ann}(\theta)\cap {\rm Ann}\left( {\bf A}\right) =0$, then ${\bf A}_{\theta }$ has an
annihilator component if and only if $\left[ \theta _{1}\right] ,\left[
\theta _{2}\right] ,\ldots ,\left[ \theta _{s}\right] $ are linearly
dependent in ${\rm H}^2\left( {\bf A}, \mathbb{C} \right) $.

\bigskip

Let ${\mathbb V}$ be a finite-dimensional vector space over $\mathbb{C}.$ The {\it{%
Grassmannian}} $G_{k}\left( {\mathbb V}\right) $ is the set of all $k$-dimensional
linear subspaces of $ {\mathbb V}$. Let $G_{s}\left( {\rm H}^2\left( {\bf A},\mathbb{C}%
\right) \right) $ be the Grassmannian of subspaces of dimension $s$ in $%
{\rm H}^2\left( {\bf A},\mathbb{C} \right) $. There is a natural action of $%
{\rm Aut}\left( {\bf A}\right) $ on $G_{s}\left( {\rm H}^2\left( {\bf A},\mathbb{C} 
\right) \right) $. Let $\phi \in {\rm Aut}\left( {\bf A}\right) $. For $W=\left\langle %
\left[ \theta _{1}\right] ,\left[ \theta _{2}\right] , \ldots ,\left[ \theta _{s}%
\right] \right\rangle \in G_{s}\left( {\rm H}^2\left( {\bf A}, \mathbb{C} %
\right) \right) $ define $\phi W=\left\langle \left[ \phi \theta _{1}\right]
,\left[ \phi \theta _{2}\right] , \ldots,\left[ \phi \theta _{s}\right]
\right\rangle.$ Then $\phi W\in G_{s}\left({\rm H}^2\left( {\bf A},\mathbb{C}  \right) \right) $. We denote the orbit of $W\in G_{s}\left(
{\rm H}^2\left( {\bf A},\mathbb{C} \right) \right) $ under the action of $%
{\rm Aut}\left( {\bf A}\right) $ by ${\rm Orb}\left( W\right) $. Since given
\begin{equation*}
W_{1}=\left\langle \left[ \theta _{1}\right] ,\left[ \theta _{2}\right] ,\ldots ,\left[ \theta_{s}\right] \right\rangle,
W_{2}=\left\langle \left[ \vartheta_{1}\right] ,\left[ \vartheta _{2}\right] ,\ldots ,\left[ \vartheta _{s}\right]
\right\rangle \in G_{s}\left( {\rm H}^2\left( {\bf A},\mathbb{C}\right)
\right)
\end{equation*}%
we easily have that in case $W_{1}=W_{2}$, then $\underset{i=1}{\overset{s}{%
\cap }}{\rm Ann}(\theta _{i})\cap {\rm Ann}\left( {\bf A}\right) =\underset{i=1}{\overset{s}%
{\cap }}{\rm Ann}(\vartheta _{i})\cap {\rm Ann}\left( {\bf A}\right) $, we can introduce
the set

\begin{equation*}
T_{s}\left( {\bf A}\right) =\left\{ 
W=\left\langle \left[ \theta _{1}\right] ,\left[ \theta _{2}\right] ,\ldots ,\left[ \theta _{s}\right] \right\rangle \in
G_{s}\left( {\rm H}^2\left( {\bf A}, \mathbb{C} \right) \right) :\underset{i=1}{%
\overset{s}{\cap }} {\rm Ann}(\theta _{i})\cap {\rm Ann}\left( {\bf A}\right) =0\right\},
\end{equation*}
which is stable under the action of ${\rm Aut}\left( {\bf A}\right).$

\medskip

Now, let ${\mathbb V}$ be an $s$-dimensional linear space and let us denote by $%
{\rm E}\left( {\bf A},{\mathbb V}\right) $ the set of all non-split $s${\it{-}dimensional} central extensions of ${\bf A}$ by
${\mathbb V}.$ We can write
\begin{equation*}
{\rm E}\left( {\bf A},{\mathbb V}\right) =
\left\{ {\bf A}_{\theta }:\theta \left( x,y\right) =
\underset{i=1}{\overset{s}{\sum }}\theta _{i}\left( x,y\right) e_{i}\mbox{
and }\left\langle \left[ \theta _{1}\right] ,\left[ \theta _{2}\right] ,\ldots ,
\left[ \theta _{s}\right] \right\rangle \in T_{s}\left( {\bf A}\right) \right\} .
\end{equation*}%
Also we have the next result, which can be proved as \cite[Lemma 17]{hac16}.

\begin{lemma}
 Let ${\bf A}_{\theta },{\bf A}_{\vartheta }\in {\rm E}\left( {\bf A},{\mathbb V}\right).$ 
 Suppose that $\theta \left( x,y\right) =\underset{i=1}{\overset{s}{\sum }}%
\theta _{i}\left( x,y\right) e_{i}$ and $\vartheta \left( x,y\right) =%
\underset{i=1}{\overset{s}{\sum }}\vartheta _{i}\left( x,y\right) e_{i}$.
Then the Zinbiel algebras ${\bf A}_{\theta }$ and ${\bf A}_{\vartheta } $ are isomorphic
if and only if ${\rm Orb}\left\langle \left[ \theta _{1}\right] ,%
\left[ \theta _{2}\right],\ldots,\left[ \theta _{s}\right] \right\rangle =%
{\rm Orb}\left\langle \left[ \vartheta _{1}\right] ,\left[ \vartheta
_{2}\right] ,\ldots,\left[ \vartheta _{s}\right] \right\rangle $.
\end{lemma}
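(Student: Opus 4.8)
The plan is to show the two implications separately, following the classical Skjelbred–Sund scheme as adapted to Zinbiel algebras. Throughout I write $\mathbb{V}=\langle e_1,\dots,e_s\rangle$ and identify $\mathrm{Ann}(\mathbb{V})$ inside ${\bf A}_\theta={\bf A}\oplus\mathbb{V}$; note that since ${\bf A}_\theta$ and ${\bf A}_\vartheta$ lie in $\mathrm{E}({\bf A},\mathbb{V})$, the subspace $\mathbb{V}$ coincides with $\mathrm{Ann}({\bf A}_\theta)$ restricted appropriately — more precisely, using $\mathrm{Ann}({\bf A}_\theta)=(\mathrm{Ann}(\theta)\cap\mathrm{Ann}({\bf A}))\oplus\mathbb{V}$ from the Preliminaries together with the defining condition $\mathrm{Ann}(\theta)\cap\mathrm{Ann}({\bf A})=0$ for elements of $T_s({\bf A})$, we get $\mathrm{Ann}({\bf A}_\theta)=\mathbb{V}=\mathrm{Ann}({\bf A}_\vartheta)$. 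This observation is what makes the argument work and should be stated first.

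First I would prove the ``if'' direction. Suppose $\mathrm{Orb}\langle[\theta_1],\dots,[\theta_s]\rangle=\mathrm{Orb}\langle[\vartheta_1],\dots,[\vartheta_s]\rangle$. Then there is $\phi\in\mathrm{Aut}({\bf A})$ with $\langle[\phi\theta_1],\dots,[\phi\theta_s]\rangle=\langle[\vartheta_1],\dots,[\vartheta_s]\rangle$ as subspaces of $\mathrm{H}^2({\bf A},\mathbb{C})$. Hence there is an invertible matrix $(a_{ij})\in\mathrm{GL}_s(\mathbb{C})$ and a linear map $f\colon{\bf A}\to\mathbb{V}$ such that $\phi\theta_i=\sum_j a_{ij}\vartheta_j+\delta(f_i)$ for suitable coboundary components. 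Equivalently, writing $\Phi=(a_{ij})$ as an automorphism of $\mathbb{V}$, one has $\phi\theta=\Phi\circ\vartheta+\delta f$ as $\mathbb{V}$-valued cocycles after the identification. I would then define the linear map $\Psi\colon{\bf A}_\vartheta\to{\bf A}_\theta$ by $\Psi(x+v)=\phi(x)+\Phi^{-1}(v)-\Phi^{-1}(f(\phi(x)))$ (or the analogous formula with the roles arranged to absorb the coboundary term) and check directly from the multiplication rule $[x+v,y+w]_{{\bf A}_\theta}=xy+\theta(x,y)$ that $\Psi$ is an algebra isomorphism; this is the routine verification that the coboundary $\delta f$ is exactly the ambiguity killed by adjusting the ``$\mathbb{V}$-component'' of the map, and that $\Phi$ handles the change of basis in $\mathbb{V}$.

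Next I would prove the ``only if'' direction, which is where the real content lies. Suppose $\Psi\colon{\bf A}_\vartheta\to{\bf A}_\theta$ is an algebra isomorphism. Since $\Psi$ carries $\mathrm{Ann}({\bf A}_\vartheta)$ onto $\mathrm{Ann}({\bf A}_\theta)$, and both annihilators equal $\mathbb{V}$ by the opening observation, $\Psi$ restricts to an automorphism $\Phi$ of $\mathbb{V}$ and induces a well-defined isomorphism $\bar\Psi$ on the quotients ${\bf A}_\vartheta/\mathbb{V}\to{\bf A}_\theta/\mathbb{V}$, both of which are canonically ${\bf A}$. Thus $\bar\Psi$ gives an automorphism $\phi\in\mathrm{Aut}({\bf A})$. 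Writing $\Psi(x)=\phi(x)+f(x)$ for $x$ in a fixed complement of $\mathbb{V}$ (with $f\colon{\bf A}\to\mathbb{V}$ linear) and expanding the identity $\Psi([x,y]_{{\bf A}_\vartheta})=[\Psi x,\Psi y]_{{\bf A}_\theta}$, one obtains, after separating the ${\bf A}$-component (which just restates $\phi\in\mathrm{Aut}({\bf A})$) and the $\mathbb{V}$-component, the relation $\Phi(\vartheta(x,y))+f(xy)=\theta(\phi x,\phi y)$, i.e. $\Phi\circ\vartheta=(\phi\theta)-\delta f$ as $\mathbb{V}$-valued cocycles. Passing to $\mathrm{H}^2$ and reading off components in the basis $e_1,\dots,e_s$ shows $\langle[\phi\theta_1],\dots,[\phi\theta_s]\rangle=\langle[\vartheta_1],\dots,[\vartheta_s]\rangle$, hence the two orbits coincide.

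The main obstacle, and the point deserving the most care, is the bookkeeping in the ``only if'' direction: one must justify that $\Psi$ genuinely sends $\mathbb{V}$ to $\mathbb{V}$ (this uses $W\in T_s({\bf A})$, i.e. $\mathrm{Ann}(\theta)\cap\mathrm{Ann}({\bf A})=0$, to pin down $\mathrm{Ann}({\bf A}_\theta)=\mathbb{V}$ exactly, with no extra annihilator coming from ${\bf A}$), and one must track how the coboundary $\delta f$ and the $\mathbb{V}$-automorphism $\Phi$ combine so that the equality descends to subspaces of $\mathrm{H}^2$ rather than merely to cosets of individual cocycles. Once the annihilator identification is in place, everything else is a direct translation of the multiplication rule. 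As the excerpt notes, the argument is formally identical to that of \cite[Lemma 17]{hac16}, so I would present the Zinbiel-specific computations in condensed form and refer to that source for the details common to both settings.
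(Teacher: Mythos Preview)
Your proof plan is correct and is precisely the standard Skjelbred--Sund argument that the paper invokes by citing \cite[Lemma~17]{hac16} rather than writing out; the paper itself gives no proof beyond that reference. (The explicit formula you propose for $\Psi$ in the ``if'' direction is slightly off---with your conventions the map $\Psi(x+v)=\phi(x)+f(x)+\Phi(v)$ is what verifies directly---but you flagged this yourself, and the idea is right.)
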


From here, there exists a one-to-one correspondence between the set of ${\rm Aut}
\left( {\bf A}\right) $-orbits on $T_{s}\left( {\bf A}\right) $ and the set of
isomorphism classes of ${\rm E}\left( {\bf A},{\mathbb V}\right).$ Consequently we have a
procedure that allows us, given the Zinbiel algebras ${\bf A}^{\prime }$ of
dimension $n-s$, to construct all non-split central extensions of ${\bf A}^{\prime }.$ This procedure would be:

\medskip

{\centerline{\it Procedure}}

\begin{enumerate}
\item For a given Zinbiel algebra ${\bf A}^{\prime }$ of dimension $%
n-s $, determine ${\rm H}^2( {\bf A}^{\prime },\mathbb{C}) $, ${\rm Ann}( {\bf A}^{\prime })
$ and ${\rm  Aut}( {\bf A}^{\prime }) $.

\item Determine the set of ${\rm Aut}( {\bf A}^{\prime }) $-orbits on $T_{s}(
{\bf A}^{\prime }) $.

\item For each orbit, construct the Zinbiel algebra corresponding to a
representative of it.
\end{enumerate}

\medskip

Finally, let us introduce some of notation. Let ${\bf A}$ be a Zinbiel algebra with
a basis $e_{1},e_{2}, \ldots,e_{n}$. Then by $\Delta _{ij}$\ we will denote the
Zinbiel bilinear form
$\Delta _{ij}:{\bf A}\times {\bf A}\longrightarrow \mathbb{C}$
with $\Delta _{ij}\left( e_{l},e_{m}\right) =\delta_{il}\delta_{jm}.$ 
Then the set $\left\{ \Delta
_{ij}:1\leq i, j\leq n\right\} $ is a basis for the linear space of
bilinear forms on ${\bf A}$. Then every $\theta \in
{\rm Z}^2\left( {\bf A},\mathbb{C}\right) $ can be uniquely written as $%
\theta =\underset{1\leq i,j\leq n}{\sum }c_{ij}\Delta _{{i}{j}}$, where $%
c_{ij}\in \mathbb{C}$.
Let us fix the following notation

\begin{longtable}{lcl}
$[{\bf A}]^i_j$  &  --- & $j$th $i$-dimensional central extension of   ${\bf A}.$ 
\end{longtable}

\subsection{Classification of Zinbiel algebras}
Thanks to \cite{dzhuma5} we have that every finite-dimensional Zinbiel algebra is nilpotent.
There are only one $2$-dimensional non-zero nilpotent  Zinbiel algebra given by the multiplication table: $e_1^2=e_2$.
Thanks to \cite{cfk18}, 
as central extensions of $2$-dimensional Zinbiel algebras, 
we can take the classification of all non-split $3$-dimensional Zinbiel algebras.
Now we have the classification of all $3$-dimensional Zinbiel algebras and their cohomology spaces: 
 
\begin{longtable}{|l|lll|l|} 
\hline
Algebra &\multicolumn{3}{l|}{ Multiplication table} & Cohomology \\
\hline

$\mathfrak{Z}_1$ & $ e_1e_1 = e_2$ & $e_1e_2 =\frac{1}{2} e_3$ & $e_2e_1=e_3$ &$\left\langle 2[\Delta_{13}]+3[\Delta_{22}]+6[\Delta_{31}] \right \rangle$ \\ 
\hline

$\mathfrak{N}_1^{\mathbb{C}}$&  $ e_1e_1 = e_2$ && & 
$\left\langle [\Delta_{13}],\ [\Delta_{12}]+2[\Delta_{21}],  
\ [\Delta_{31}],\ [\Delta_{33}] \right\rangle$ \\ \hline

$\mathfrak{N}_1$ &$e_1e_2=  e_3$ &$ e_2e_1=-e_3$& &
$\left\langle [\Delta_{11}],\ [\Delta_{12}],\ [\Delta_{13}],  \ [\Delta_{22}],\ [\Delta_{23}] \right\rangle$\\

\hline

$\mathfrak{N}_2(\beta)$  & $e_1e_1=  e_3 $&$e_1e_2=e_3$&$e_2e_2=\beta e_3$&
$\left \langle[\Delta_{11}],\ [\Delta_{21}],\ [\Delta_{22}] \right\rangle$\\ \hline

$\mathfrak{N}_3$    & $e_1e_1= e_3$& $e_1e_2=e_3$ & $e_2e_1=e_3$ & 
$\left\langle [\Delta_{11}],\ [\Delta_{12}],\ [\Delta_{22}] \right\rangle$ \\

\hline

\end{longtable}

\begin{remark} All $n$-dimensional central extensions of $\mathfrak{N}_2(\beta)$ and $\mathfrak{N}_3$
are trivial.
They are split or have $(n+1)$-dimensional annihilator, which gives them as central extensions of a $2$-dimensional Zinbiel algebra found in \cite{cfk18}.
\end{remark}

\subsection{Central extensions of $\mathfrak{Z}_1$} It is easy to see that from $\dim({\rm H}^2(\mathfrak{Z}_1,\mathbb{C}))=1$ we have only one non-split central extension of
$\mathfrak{Z}_1.$
It is
$$\begin{array}{llllllll} 
[\mathfrak{Z}_1]^1_1 &:& e_1e_1=e_2, & e_1e_2=\textstyle\frac{1}{2}e_3, & e_1e_3=2e_4, & e_2e_1=e_3, &  e_2e_2=3e_4, & e_3e_1=6e_4.\\
\end{array}$$


\section{Central extensions of $\mathfrak{N}_1^{\mathbb{C}}$} 
The multiplication table of $3$-dimensional Zinbiel algebra $\mathfrak{N}_1^{\mathbb{C}}$ is given  by: 
 \[ e_1^2=e_2. \]
 The automorphism group of $\mathfrak{N}_1^{\mathbb{C}}$ consists of invertible matrices of the form
 \[\phi=\begin{pmatrix}
x & 0 & 0\\
z & x^2 & t\\
u & 0 & y
\end{pmatrix}. \]
The cohomology space of $\mathfrak{N}_1^{\mathbb{C}}$ is given  by: 
\[  \nabla_1=[\Delta_{12}]+2[\Delta_{21}], \nabla_2=[\Delta_{13}], \nabla_3=[\Delta_{31}], \nabla_4=[\Delta_{33}].
\]
Since
\[\begin{pmatrix}
x & 0 & 0\\
z & x^2 & t\\
u & 0 & y
\end{pmatrix}^t\begin{pmatrix}
0 & \alpha_1 & \alpha_2\\
2\alpha_1 & 0 & 0\\
\alpha_3 & 0 & \alpha_4
\end{pmatrix}\begin{pmatrix}
x & 0 & 0\\
z & x^2 & t\\
u & 0 & y
\end{pmatrix}=
 \begin{pmatrix}
\alpha^* & \alpha_1^* & \alpha_2^*\\
2\alpha_1^* & 0 & 0\\
\alpha_3^* & 0 & \alpha_4^*
\end{pmatrix},\]
where
\begin{longtable}{lcl}
$\alpha_1^*$&$=$&$x^3 \alpha_1$\\
$\alpha_2^*$&$=$&$t x \alpha_1 + x y \alpha_2 + u y \alpha_4$\\
$\alpha_3^*$&$=$&$2 t x \alpha_1 + x y \alpha_3 + u y \alpha_4$\\
$\alpha_4^*$&$=$&$y^2 \alpha_4$
\end{longtable}
we obtain that the action of $\operatorname{Aut}\left(\mathfrak{N}_1^{\mathbb{C}}\right)$ on a subspace $\langle \sum\limits_{i=1}^4 \alpha_i \nabla_i  \rangle$ is given by
$\langle \sum\limits_{i=1}^4 \alpha_i^* \nabla_i \rangle.$

\bigskip

\subsection{$1$-dimensional central extensions of $\mathfrak{N}_1^{\mathbb{C}}$}
Consider an element 
$ \theta_1= \sum\limits_{i=1}^4 \alpha_i \nabla_i.$ 
We are interested in elements with  $(\alpha_2, \alpha_3, \alpha_4) \neq (0,0,0)$
and $\alpha_1\neq 0.$
We provide the orbit of every possible case:
\begin{enumerate}

\item  $\alpha_4\neq 0, \alpha_1\neq 0:$
 by choosing 
        $x=\sqrt[3]{\frac{\alpha_4}{\alpha_1}},$  
        $y=1,$
        $z=0,$
        $t=\frac{\alpha_2-\alpha_3}{\alpha_1},$
        and
        $u=\frac{-2 \alpha_2+\alpha_3}{\sqrt[3]{\alpha_1  \alpha_4^2}}$
        we have the representative
            $\langle\nabla_1+\nabla_4\rangle.$

  \item  $\alpha_4= 0, \alpha_1 \neq 0, \alpha_3\neq 2 \alpha_2:$
 by choosing 
        $x=1,$  
        $y=-\frac{2 \alpha_1}{\alpha_3-2 \alpha_2},$
        $z=0,$
        $t=\frac{\alpha_3}{\alpha_3-2 \alpha_2},$
        and
        $u=0$
        we have the representative
            $\langle \nabla_1+\nabla_2\rangle.$             

    \item  $\alpha_4= 0, \alpha_1 \neq 0, \alpha_3=  2 \alpha_2:$
 by choosing 
        $x=1,$  
        $y=-\frac{ \alpha_1}{\alpha_2},$
        $z=0,$
        $t=1,$
        and
        $u=0$
        we have the representative
            $\langle \nabla_1 \rangle.$

\end{enumerate}

Summarizing and noting that $\langle \nabla_1\rangle$ gives a split algebra,
we have the following distinct nontrivial orbits:
\[  \langle \nabla_1+\nabla_4\rangle,  \ \langle \nabla_1+\nabla_2\rangle.  \ 
\]

\bigskip

\subsection{$2$-dimensional central extensions of $\mathfrak{N}_1^{\mathbb{C}}$}
Consider  elements 
$ \theta_1= \sum\limits_{i=1}^4 \alpha_i \nabla_i$
and 
$ \theta_2= \sum\limits_{i=1}^4 \beta_i \nabla_i.$ 
We are interested in elements with $(\alpha_2, \alpha_3, \alpha_4,\beta_2, \beta_3, \beta_4) \neq (0,0,0,0,0,0)$ and $(\alpha_1, \beta_1) \neq (0,0).$
We provide the orbit of every possible case:
\begin{enumerate}
\item  $\alpha_4=1, \beta_4=0.$ Then
\begin{enumerate} 
\item $ \beta_1=1, \alpha_1=0, \alpha_2\neq \alpha_3, 2\beta_2\neq \beta_3:$
 by choosing 
        $x=\frac{(\alpha_2-\alpha_3) (2 \beta_2-\beta_3)}{2},$  
        $y=\frac{(\alpha_2-\alpha_3)^2 (2 \beta_2-\beta_3)}{2},$
        $z=0,$
        $t=-\frac{\beta_3(\alpha_2-\alpha_3)^2 (2 \beta_2-\beta_3) }{4},$
        and
        $u=-\frac{\alpha_3(\alpha_2-\alpha_3)  (2 \beta_2-\beta_3)}{2}$
        we have the representative
            $\langle \nabla_2+\nabla_4, \nabla_1+\nabla_2\rangle.$     
      
\item $ \beta_1=1, \alpha_1=0, \alpha_2\neq \alpha_3, 2\beta_2= \beta_3:$
 by choosing 
        $x=1,$  
        $y=\alpha_2-\alpha_3,$
        $z=0,$
        $t=-(\alpha_2-\alpha_3) \beta_2,$
        and
        $u=-\alpha_3$
        we have the representative
            $\langle \nabla_2+\nabla_4,  \nabla_1\rangle.$     
      
 \item $ \beta_1=1, \alpha_1=0, \alpha_2= \alpha_3, 2\beta_2\neq  \beta_3:$
 by choosing 
        $x=1,$  
        $y=\frac{2}{2 \beta_2-\beta_3},$
        $z=0,$
        $t=-\frac{\beta_3}{2 \beta_2-\beta_3},$
        and
        $u=-\alpha_2$
        we have the representative
            $\langle \nabla_4,  \nabla_1+\nabla_2\rangle.$     
      
  \item $ \beta_1=1, \alpha_1=0, \alpha_2= \alpha_3, 2\beta_2=  \beta_3:$
 by choosing 
        $x=1,$  
        $y=1,$
        $z=0,$
        $t=-  \beta_2,$
        and
        $u=-\alpha_2$
        we have the representative
            $\langle \nabla_4,  \nabla_1 \rangle.$

 \item $ \beta_1=0, \alpha_1\neq 0,\beta_3\neq 0:$
 by choosing 
        $x=1,$  
        $y=\sqrt{\alpha_1},$
        $z=0,$
        $t=\frac{\alpha_2-\alpha_3}{\sqrt{\alpha_1}},$
        and
        $u=-2 \alpha_2+\alpha_3$
        we have the representative
            $\langle \nabla_1+\nabla_4, \alpha\nabla_2+ \nabla_3 \rangle.$     
 
  \item $ \beta_1=0, \alpha_1\neq 0,\beta_3= 0:$
 by choosing 
        $x=1,$  
        $y=\sqrt{\alpha_1},$
        $z=0,$
        $t=\frac{\alpha_2-\alpha_3}{\sqrt{\alpha_1}},$
        and
        $u=-2 \alpha_2+\alpha_3$
        we have the representative
            $\langle \nabla_1+\nabla_4, \nabla_2 \rangle.$

            \end{enumerate}
\item  $\alpha_4= 0, \beta_4=0, \alpha_1= 1, \beta_1=0.$ Then
\begin{enumerate} 
\item $ 2\beta_2\neq \beta_3, \beta_3\neq 0:$
 by choosing 
        $x=1,$  
        $y=1,$
        $z=0,$
        $t=\frac{\alpha_2 \beta_3-\alpha_3 \beta_2}{2 \beta_2-\beta_3},$
        and
        $u=0$
        we have the representative
            $\langle \nabla_1, \alpha \nabla_2+\nabla_3\rangle_{\alpha\neq \frac{1}{2}}.$  
            
\item $ 2\beta_2\neq \beta_3, \beta_3= 0:$
 by choosing 
        $x=1,$  
        $y=1,$
        $z=0,$
         $t=-\frac{\alpha_3}{2},$ 
        and
        $u=0$
        we have the representative
            $\langle \nabla_1,  \nabla_2\rangle.$  
            
 \item $ 2\beta_2= \beta_3, 2\alpha_2\neq \alpha_3 :$
 by choosing 
        $x=1,$  
        $y=\frac{1}{\alpha_3-2 \alpha_2},$
        $z=0,$
        $t=\frac{\alpha_2}{2 \alpha_2-\alpha_3},$
        and
        $u=0$
        we have the representative
            $\langle \nabla_1+\nabla_3,  \nabla_2+2\nabla_3 \rangle.$  
  \item $ 2\beta_2= \beta_3, 2\alpha_2= \alpha_3 :$   
      we have the representative
            $\langle \nabla_1,  \frac{1}{2}\nabla_2+\nabla_3 \rangle.$  
                           
            \end{enumerate}
  \end{enumerate}

Summarizing, we have the following distinct orbits:
\[ \langle \nabla_1,  \nabla_2\rangle, \ 
\langle \nabla_1, \alpha \nabla_2+\nabla_3\rangle,  \
\langle  \nabla_1,\nabla_2+\nabla_4\rangle, \ 
\langle \nabla_1, \nabla_4 \rangle, \ 
\langle   \nabla_1+\nabla_2, \nabla_4\rangle, \] 
\[\langle \nabla_1+ \nabla_2, \nabla_2+\nabla_4\rangle, \
\langle \nabla_1+\nabla_3,  \nabla_2+2\nabla_3 \rangle, \  
\langle \nabla_1+\nabla_4, \nabla_2 \rangle, \  
\langle \nabla_1+\nabla_4, \alpha \nabla_2+ \nabla_3 \rangle. \]

\bigskip

\subsection{$3$-dimensional central extensions of $\mathfrak{N}_1^{\mathbb{C}}$}
Consider  elements 
\[ \theta_1= \sum\limits_{i=1}^4 \alpha_i \nabla_i, \ 
 \theta_2= \sum\limits_{i=1}^4 \beta_i \nabla_i, \mbox{ and }
 \theta_3= \sum\limits_{i=1}^4 \gamma_i \nabla_i.\] 
We are interested in elements with 
$(\alpha_2, \alpha_3, \alpha_4,\beta_2, \beta_3, \beta_4,\gamma_2, \gamma_3, \gamma_4) \neq (0,0,0,0,0,0,0,0,0)$ and $(\alpha_1, \beta_1, \gamma_1) \neq (0,0,0).$
We provide the orbit of every possible case:
\begin{enumerate}
\item  $\alpha_4=1, \beta_1=\gamma_1=0, \alpha_1\neq 0, \beta_4=0, \gamma_4=0.$ Then
 by choosing 
 $x=\frac{1}{\sqrt[3]{\alpha_1}},$
 $y=1,$
    $z=0,$
        $t=0,$
        and
        $u=0$
        we have the representative
            $\langle \nabla_1+\nabla_4,  \nabla_2, \nabla_3 \rangle.$  

\item  $\alpha_4=1, \beta_1=1, \gamma_1=0, \alpha_1= 0, \beta_4=0, \gamma_4=0, 2 \gamma_2 \neq \gamma_3, 
\gamma_2 \neq \gamma_3, \gamma_3\neq 0.$ Then
 by choosing 
 $x=1,$
 $y=1,$
    $z=0,$
        $t=\frac{\beta_2 \gamma_3-\beta_3 \gamma_2}{2 \gamma_2-\gamma_3},$
        and
        $u=\frac{\alpha_2 \gamma_3-\alpha_3 \gamma_2}{\gamma_2-\gamma_3}$
        we have the representative
            $\langle \nabla_4, \nabla_1,  \alpha \nabla_2+ \nabla_3 \rangle_{\alpha\neq \frac{1}{2},1}.$  

\item  $\alpha_4=1, \beta_1=1, \gamma_1=0, \alpha_1= 0, \beta_4=0, \gamma_4=0,  
\gamma_2 \neq 0, \gamma_3= 0.$ Then
 by choosing 
 $x=1,$
 $y=1,$
    $z=0,$
        $t=-\frac{\beta_3 }{2},$
        and
        $u=-\alpha_3 $
        we have the representative
            $\langle \nabla_4, \nabla_1,   \nabla_2  \rangle.$

\item  $\alpha_4=1, \beta_1=1, \gamma_1=0, \alpha_1= 0, \beta_4=0, \gamma_4=0, 
\gamma_2 = \gamma_3, \gamma_3\neq 0, \alpha_2\neq \alpha_3.$ Then
 by choosing 
 $x=\alpha_3-\alpha_2,$
 $y=\alpha_2-\alpha_3,$
    $z=0,$
        $t=(\alpha_2-\alpha_3) (\beta_2-\beta_3),$
        and
        $u=0$
        we have the representative
            $\langle \nabla_3+\nabla_4, \nabla_1,   \nabla_2+ \nabla_3 \rangle.$

\item  $\alpha_4=1, \beta_1=1, \gamma_1=0, \alpha_1= 0, \beta_4=0, \gamma_4=0, 
\gamma_2 = \gamma_3, \gamma_3\neq 0, \alpha_2= \alpha_3.$ Then
 by choosing 
 $x=1,$
 $y=1,$
    $z=0,$
        $t=\beta_2-\beta_3,$
        and
        $u=0$
        we have the representative
            $\langle \nabla_4, \nabla_1,   \nabla_2+ \nabla_3 \rangle.$              
 
\item  $\alpha_4=1, \beta_1=1, \gamma_1=0, \alpha_1= 0, \beta_4=0, \gamma_4=0, 
2 \gamma_2 = \gamma_3, \gamma_3\neq 0,  2 \beta_2 \neq \beta_3, 2 \alpha_2 \neq \alpha_3.$ Then
 by choosing 
 $x=(\alpha_3-2 \alpha_2) (\beta_3-2 \beta_2),$
 $y=(\alpha_3-2 \alpha_2)^2 (\beta_3-2 \beta_2),$
    $z=0,$
        $t=0,$
        and
        $u=0$
        we have the representative
            $\langle \nabla_3+ \nabla_4, \nabla_1+ \nabla_3,   \nabla_2+ 2\nabla_3 \rangle.$

\item  $\alpha_4=1, \beta_1=1, \gamma_1=0, \alpha_1= 0, \beta_4=0, \gamma_4=0, 
2 \gamma_2 = \gamma_3, \gamma_3\neq 0,  2 \beta_2 \neq \beta_3, 2 \alpha_2 = \alpha_3.$ Then
 by choosing 
 $x=2\beta_2- \beta_3,$
 $y=\beta_3-2 \beta_2,$
    $z=0,$
        $t=0,$
        and
        $u=0$
        we have the representative
            $\langle  \nabla_4, \nabla_1+ \nabla_3,   \nabla_2+ 2\nabla_3 \rangle.$

\item  $\alpha_4=1, \beta_1=1, \gamma_1=0, \alpha_1= 0, \beta_4=0, \gamma_4=0, 
2 \gamma_2 = \gamma_3, \gamma_3\neq 0,  2 \beta_2 = \beta_3.$ Then
 by choosing 
 $x=1,$
 $y=1,$
    $z=0,$
        $t=0$
        and
        $u=\alpha_3-2 \alpha_2$
        we have the representative
            $\langle  \nabla_4, \nabla_1,   \nabla_2+ 2\nabla_3 \rangle.$

\item  $\alpha_4=0, \beta_4=0, \gamma_4=0.$ Then
        we have the representative
            $\langle  \nabla_1, \nabla_2,   \nabla_3 \rangle.$              
                        
 \end{enumerate}

Note that

\begin{longtable}{lcccl}
$\left\langle  \begin{array}{l}
\nabla_1+ \nabla_3,  \\ 
\nabla_2+ 2\nabla_3,\\
\nabla_3+ \nabla_4
\end{array}\right\rangle$ &
$\to$ &
$\left\{ 
\begin{array}{lll}
x=1 & y=1 & z=0\\
t=0 & u=1 &
\end{array}
\right\}$

$\to$ &
$\left\langle  \begin{array}{l}
\nabla_1+ \nabla_3,  \\ 
\nabla_2+ 2\nabla_3,\\ \nabla_4 
\end{array} \right\rangle.$
\end{longtable}

Summarizing, we have the following distinct orbits:
\[ \langle  \nabla_1, \nabla_2,   \nabla_3 \rangle, \              
\langle \nabla_1,   \nabla_2, \nabla_4  \rangle, \ 
\langle \nabla_1,  \alpha \nabla_2+ \nabla_3, \nabla_4 \rangle, \
\langle  \nabla_1,   \nabla_2+ \nabla_3, \nabla_3+\nabla_4 \rangle,  \]  
\[ 
\langle  \nabla_1+ \nabla_3,   \nabla_2+ 2\nabla_3, \nabla_4 \rangle, \  
\langle \nabla_1+\nabla_4,  \nabla_2, \nabla_3 \rangle.\]

\subsection{$4$-dimensional central extensions of $\mathfrak{N}_1^{\mathbb{C}}$}
 There is only one $4$-dimensional central extension defined by 
 \[\langle \nabla_1, \nabla_2, \nabla_3, \nabla_4 \rangle.\]

\subsection{Classification theorem}
Summarizing all results regarding to classification of distinct orbits, 
we have the classification of all central extensions of the algebra $\mathfrak{N}_1^{\mathbb{C}}.$
Note that, we are interested only in non-trivial central extensions,
which are non-split and can not be considered as central extensions of an algebra of smaller dimension than $\mathfrak{N}_1^{\mathbb{C}}.$

\begin{theorem}
Let $[\mathfrak{N}_1^{\mathbb{C}}]^i$ be an $i$-dimensional non-trivial central extension of the Zinbiel algebra $\mathfrak{N}_1^{\mathbb{C}}.$ Then $[\mathfrak{N}_1^{\mathbb{C}}]^i$ is isomorphic to one algebra from the following list:

\begin{enumerate}
    \item if $i=1:$
    \begin{longtable}{lllllll}
$[\mathfrak{N}_1^{\mathbb{C}}]^1_{01}$ &$:$& $ e_1^2=e_2$ & $e_1 e_2 =e_4$ &$e_2 e_1 =2e_4$ &$e_3 e_3 =e_4$\\

$[\mathfrak{N}_1^{\mathbb{C}}]^1_{02}$ &$:$& $ e_1^2=e_2$ & $e_1 e_2 =e_4$ &$e_1 e_3 =e_4$ &$e_2 e_1 =2e_4$ \\

    \end{longtable}

    \item if $i=2:$
    \begin{longtable}{llllllllll}

$[\mathfrak{N}_1^{\mathbb{C}}]^2_{01}$ &$:$& $ e_1^2=e_2$ &$e_1 e_2 =e_4$ &$e_1 e_3 =e_5$&$e_2 e_1 =2e_4$ \\

$[\mathfrak{N}_1^{\mathbb{C}}]^2_{02}(\alpha)$ &$:$& $ e_1^2=e_2$ &$e_1 e_2 =e_4$ &$e_1 e_3 =\alpha e_5$&$e_2 e_1 =2e_4$&$e_3 e_1 =e_5$\\

$[\mathfrak{N}_1^{\mathbb{C}}]^2_{03}$ &$:$& $ e_1^2=e_2$ &$e_1 e_2 =e_4$ &$e_1 e_3 =e_5$&$e_2 e_1 =2e_4$&$e_3 e_3 =e_5$\\

$[\mathfrak{N}_1^{\mathbb{C}}]^2_{04}$ &$:$& $ e_1^2=e_2$&$e_1 e_2 =e_4$&$e_2 e_1 =2e_4$ &$e_3 e_3 =e_5$\\

$[\mathfrak{N}_1^{\mathbb{C}}]^2_{05}$ &$:$& $ e_1^2=e_2$ &$e_1 e_2 =e_4$ &$e_1 e_3 =e_4$&$e_2 e_1 =2e_4$ &$e_3 e_3 =e_5$ \\

$[\mathfrak{N}_1^{\mathbb{C}}]^2_{06}$ &$:$& $ e_1^2=e_2$ &$e_1 e_2 =e_4$ &$e_1 e_3 =e_4+e_5$ &$e_2 e_1 =2e_4$ &$e_3 e_3 =e_5$\\

$[\mathfrak{N}_1^{\mathbb{C}}]^2_{07}$ &$:$& $ e_1^2=e_2$ &$e_1 e_2 =e_4$ &$e_1 e_3 =e_5$ &$e_2 e_1 =2e_4$ &\multicolumn{2}{l}{$e_3 e_1 =e_4+2e_5$}\\

$[\mathfrak{N}_1^{\mathbb{C}}]^2_{08}$ &$:$& $ e_1^2=e_2$ &$e_1 e_2 =e_4$ &$e_1 e_3 =e_5$ &$e_2 e_1 =2e_4$ &$e_3 e_3 =e_4$\\

$[\mathfrak{N}_1^{\mathbb{C}}]^2_{09}(\alpha)$ &$:$& $ e_1^2=e_2$ &$e_1 e_2 =e_4$
&$e_1 e_3 =\alpha e_5$&$e_2 e_1 =2e_4$ &$e_3 e_1 =e_5$ &$e_3 e_3 =e_4$

    \end{longtable}

    \item if $i=3:$
    \begin{longtable}{llllllllll}

$[\mathfrak{N}_1^{\mathbb{C}}]^3_{01}$ &$:$& $ e_1^2=e_2$ &$e_1 e_2 =e_4$&$e_1 e_3 =e_5$ &$e_2 e_1 =2e_4$&$e_3 e_1 =e_6$\\

$[\mathfrak{N}_1^{\mathbb{C}}]^3_{02}$ &$:$& $ e_1^2=e_2$ &$e_1 e_2 =e_4$&$e_1 e_3 =e_5$ &$e_2 e_1 =2e_4$&$e_3 e_3 =e_6$\\

$[\mathfrak{N}_1^{\mathbb{C}}]^3_{03}(\alpha)$ &$:$& $ e_1^2=e_2$ &$e_1 e_2 =e_4$ &$e_1 e_3 =\alpha e_5$ &$e_2 e_1 =2e_4$ &$e_3 e_1 =e_5$ &$e_3 e_3 =e_6$ \\

$[\mathfrak{N}_1^{\mathbb{C}}]^3_{04}$ &$:$& $ e_1^2=e_2$ &$e_1 e_2 =e_4$&$e_1 e_3 =e_5$ &$e_2 e_1 =2e_4$ &$e_3 e_1 =e_5+e_6$ &$e_3 e_3 =e_6$ \\

$[\mathfrak{N}_1^{\mathbb{C}}]^3_{05}$ &$:$& $ e_1^2=e_2$ &$e_1 e_2 =e_4$&$e_1 e_3 =e_5$ &$e_2 e_1 =2e_4$&$e_3 e_1 =e_4+2e_5$ &$e_3 e_3 =e_6$\\

$[\mathfrak{N}_1^{\mathbb{C}}]^3_{06}$ &$:$& $ e_1^2=e_2$ &$e_1 e_2 =e_4$&$e_1 e_3 =e_5$ &$e_2 e_1 =2e_4$&$e_3 e_1 =e_6$ &$e_3 e_3 =e_4$\\

    \end{longtable}

    \item if $i=4:$
    \begin{longtable}{lllllllllll}

$[\mathfrak{N}_1^{\mathbb{C}}]^4_{01}$ &$:$& $ e_1^2=e_2$ &$e_1 e_2 =e_4$&$e_1 e_3 =e_5$  &$e_2 e_1 =2e_4$ &$e_3 e_1 =e_6$ &$e_3 e_3 =e_7$\\

    \end{longtable}

\end{enumerate}
\end{theorem}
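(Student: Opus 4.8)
The plan is to apply the Procedure described in Section 1 with ${\bf A}' = \mathfrak{N}_1^{\mathbb{C}}$, going through $s=1,2,3,4$ in turn, and then to assemble the resulting algebras into the four lists of the theorem, discarding those that are split or are central extensions of a lower-dimensional Zinbiel algebra. Since Step 1 of the Procedure is already done (the multiplication table, ${\rm Aut}(\mathfrak{N}_1^{\mathbb{C}})$, ${\rm Ann}(\mathfrak{N}_1^{\mathbb{C}}) = \langle e_2, e_3\rangle$, and the four-dimensional cohomology space $\langle \nabla_1, \nabla_2, \nabla_3, \nabla_4\rangle$ with the explicit $\alpha_i \mapsto \alpha_i^*$ action are recorded above), the real content is Step 2: computing the ${\rm Aut}(\mathfrak{N}_1^{\mathbb{C}})$-orbits on $T_s(\mathfrak{N}_1^{\mathbb{C}})$ for each $s$. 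For $s=1$ the condition to lie in $T_1$ is $(\alpha_2,\alpha_3,\alpha_4)\neq(0,0,0)$ together with $\alpha_1\neq 0$ (the constraint $\alpha_1\neq 0$ coming from the requirement that the extension not be a central extension of $\mathfrak{N}_1$, which is treated separately); for $s=2,3,4$ the genericity conditions on the $(\alpha_i,\beta_i,\gamma_i)$ listed in each subsection encode exactly membership in $T_s$.

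First I would carry out the $s=1$ case: given $\theta_1 = \sum \alpha_i\nabla_i$, split into the cases $\alpha_4\neq 0$, and $\alpha_4 = 0$ with $\alpha_3\neq 2\alpha_2$ or $\alpha_3 = 2\alpha_2$, and in each case exhibit an explicit automorphism $\phi$ (i.e. a choice of the parameters $x,y,z,t,u$) sending the cocycle to one of the normal forms $\langle\nabla_1+\nabla_4\rangle$, $\langle\nabla_1+\nabla_2\rangle$, or $\langle\nabla_1\rangle$; the last one yields a split algebra and is discarded, the first two give $[\mathfrak{N}_1^{\mathbb{C}}]^1_{01}$ and $[\mathfrak{N}_1^{\mathbb{C}}]^1_{02}$. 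For $s=2$ I would first use the action on $G_2$ to normalise the ``$\nabla_4$-part'' of the two-dimensional subspace (i.e. reduce to the cases $\alpha_4=1,\beta_4=0$ or $\alpha_4=\beta_4=0$, $\alpha_1=1,\beta_1=0$), then within each case further normalise using $\alpha_1,\beta_1$ and the remaining freedom in $x,y,z,t,u$, obtaining the nine listed orbit representatives; translating each representative $\langle[\theta_1],[\theta_2]\rangle$ back to a multiplication table on $\mathfrak{N}_1^{\mathbb{C}}\oplus\mathbb{C}^2$ gives the nine algebras $[\mathfrak{N}_1^{\mathbb{C}}]^2_{0k}$. The $s=3$ case proceeds the same way and produces six orbits — one of which needs the extra remark that $\langle\nabla_1+\nabla_3,\nabla_2+2\nabla_3,\nabla_3+\nabla_4\rangle$ is in the orbit of $\langle\nabla_1+\nabla_3,\nabla_2+2\nabla_3,\nabla_4\rangle$ via $x=y=1,z=0,t=0,u=1$ — and the $s=4$ case is immediate since ${\rm H}^2$ is four-dimensional, giving the single algebra $[\mathfrak{N}_1^{\mathbb{C}}]^4_{01}$.

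The main obstacle is the combinatorial bookkeeping in Steps 2 and 3: one must (a) correctly enumerate the strata of $T_s(\mathfrak{N}_1^{\mathbb{C}})$ cut out by the vanishing/non-vanishing of the relevant linear combinations of the $\alpha_i$ (notably $\alpha_3-2\alpha_2$, $\alpha_2-\alpha_3$, and $\alpha_4$, which appear as denominators in the chosen automorphisms), (b) verify that the exhibited parameter choices genuinely land on the claimed normal form — a routine but error-prone substitution into the $\alpha_i^*$ formulas — and (c) check that no two distinct normal forms are ${\rm Aut}$-equivalent, which one does by comparing invariants such as $\dim{\rm Ann}$, $\dim({\bf A}^2)$, and the structure of the squares, and by a direct argument for the one-parameter family $[\mathfrak{N}_1^{\mathbb{C}}]^2_{02}(\alpha)$ and $[\mathfrak{N}_1^{\mathbb{C}}]^3_{03}(\alpha)$ that different values of $\alpha$ give non-isomorphic algebras. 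Finally I would invoke Lemma~1 and Lemma~3 to conclude that the list is complete and irredundant: every non-trivial central extension of $\mathfrak{N}_1^{\mathbb{C}}$ arises as an ${\bf A}'_\theta$ with $\langle[\theta_1],\dots,[\theta_s]\rangle\in T_s(\mathfrak{N}_1^{\mathbb{C}})$, and two such are isomorphic precisely when the subspaces lie in the same ${\rm Aut}(\mathfrak{N}_1^{\mathbb{C}})$-orbit, so the orbit representatives computed above give exactly the algebras in the theorem.
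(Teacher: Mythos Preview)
Your proposal is correct and follows essentially the same approach as the paper: the paper's proof of this theorem is precisely the content of Subsections~2.1--2.5, which carry out the orbit computation on $T_s(\mathfrak{N}_1^{\mathbb{C}})$ for $s=1,2,3,4$ by the very case splits and explicit automorphism choices you describe, including the reduction $\langle\nabla_1+\nabla_3,\nabla_2+2\nabla_3,\nabla_3+\nabla_4\rangle\to\langle\nabla_1+\nabla_3,\nabla_2+2\nabla_3,\nabla_4\rangle$ via $x=y=1$, $z=t=0$, $u=1$. One small correction: the constraint $\alpha_1\neq 0$ is not about avoiding overlap with the extensions of $\mathfrak{N}_1$ but is simply the annihilator condition $e_2\notin\operatorname{Ann}(\theta)$, which together with $(\alpha_2,\alpha_3,\alpha_4)\neq(0,0,0)$ (i.e.\ $e_3\notin\operatorname{Ann}(\theta)$) is exactly the requirement $\operatorname{Ann}(\theta)\cap\operatorname{Ann}(\mathfrak{N}_1^{\mathbb{C}})=0$ defining $T_1$.
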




\section{Central extensions of $\mathfrak{N}_1$} 
The multiplication table of the  $3$-dimensional Zinbiel algebra $\mathfrak{N}_1$ is given  by:
\[e_1e_2=  e_3, \ e_2e_1= -e_3 .\]
 The automorphism group of $\mathfrak{N}_1$ consists of invertible matrices of the form
\[\phi=\begin{pmatrix} 
x & y & 0 \\
z & w & 0 \\
t & p & xw-yz 
\end{pmatrix}. \]
The cohomology space of $\mathfrak{N}_1$ is given  by: 
\[ \nabla_1= [\Delta_{11}], \nabla_2=[\Delta_{12}], \nabla_3= [\Delta_{22}], \nabla_4= [\Delta_{13}], \nabla_5= [\Delta_{23}].\]
\medskip
Since
\[\begin{pmatrix}
x & y & 0\\
z & w & 0\\
t & p & xw-yz
\end{pmatrix}^t\begin{pmatrix}
\alpha_1 & \alpha_2 & \alpha_4\\
0 & \alpha_3 & \alpha_5\\
0 & 0 & 0
\end{pmatrix}\begin{pmatrix}
x & y & 0\\
z & w & 0\\
t & p & xw-yz
\end{pmatrix}=
\begin{pmatrix}
\alpha_1^* & \alpha_2^*-\beta & \alpha_4^*\\
\beta & \alpha_3^* & \alpha_5^*\\
0 & 0 & 0
\end{pmatrix},\]
where

\begin{longtable}{lcl}
$\alpha_1^*$&$=$&$\alpha_1x^2+(\alpha_2x+\alpha_3z)z+(\alpha_4x+\alpha_5z)t$\\
$\alpha_2^*$&$=$&$\alpha_1xy+(\alpha_2x+\alpha_3z)w+(\alpha_4x+\alpha_5z)p+\alpha_1xy+(\alpha_2y+\alpha_3w)z+(\alpha_4y+\alpha_5w)t$\\
$\alpha_3^*$&$=$&$\alpha_1y^2+(\alpha_2y+\alpha_3w)w+(\alpha_4y+\alpha_5w)p$\\
$\alpha_4^*$&$=$&$(\alpha_4x+\alpha_5z)(wx-yz)$\\
$\alpha_5^*$&$=$&$(\alpha_4y+\alpha_5w)(wx-yz).$
\end{longtable}
we obtain that the action of $\operatorname{Aut}\left(\mathfrak{N}_1\right)$ on a subspace $ \langle \sum\limits_{i=1}^5 \alpha_i \nabla_i  \rangle$  is given by
$ \langle \sum\limits_{i=1}^5 \alpha_i^* \nabla_i  \rangle.$

\subsection{$1$-dimensional central extensions of $\mathfrak{N}_1$}
We are only interested in cocycles with $(\alpha_4,\alpha_5) \neq (0,0).$


\begin{lemma}
    The 1-dimensional subspaces $\langle \nabla_4 \rangle$ and $\langle \nabla_3+\nabla_4\rangle$ generate all pairwise distinct orbits with $(\alpha_4, \alpha_5)\neq (0,0)$.
\end{lemma}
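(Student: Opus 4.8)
The plan is to exploit the explicit formulas for $\alpha_4^*$ and $\alpha_5^*$ given above, which show that the automorphism action on the pair $(\alpha_4,\alpha_5)$ factors through the action of $\mathrm{GL}_2$ on row vectors: writing $g=\begin{pmatrix} x & y \\ z & w\end{pmatrix}$ with $\det g = xw-yz\neq 0$, we have $(\alpha_4^*,\alpha_5^*)=(\det g)\,(\alpha_4,\alpha_5)\,g$. Hence as long as $(\alpha_4,\alpha_5)\neq(0,0)$ we may, after scaling by a suitable $\det g$ and acting by $g$, bring $(\alpha_4,\alpha_5)$ to $(1,0)$; concretely one can take, e.g., a matrix sending the covector $(\alpha_4,\alpha_5)$ to $(1,0)$. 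So up to the automorphism action we may assume from the start that $\alpha_4=1$, $\alpha_5=0$, and the remaining freedom is the stabilizer of the covector $(1,0)$ (up to the overall scalar coming from $\det g$), which consists of matrices with $z$ arbitrary but constrained, $y=0$, together with the free parameters $t,p$ in the third row.

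Next I would substitute $\alpha_4=1$, $\alpha_5=0$ into the formulas for $\alpha_1^*,\alpha_2^*,\alpha_3^*$ and track what happens to $(\alpha_1,\alpha_2,\alpha_3)$. With $\alpha_5=0$ these simplify considerably: $\alpha_1^*=\alpha_1x^2+\alpha_2xz+\alpha_3z^2+xt$, $\alpha_3^*=\alpha_1y^2+\alpha_2yw+\alpha_3w^2+yp$, and $\alpha_2^*$ is the corresponding bilinear expression plus $xp+yt$. Using the parameters $t$ and $p$ of the third row of $\phi$ — which do not affect $\alpha_4^*,\alpha_5^*$ — one can kill $\alpha_1$ (choose $t$) and $\alpha_3$ (choose $p$), since these appear linearly with nonzero coefficients $x$ and $y$ respectively as soon as $x,y\neq 0$; some care is needed because we are inside the stabilizer of $(1,0)$, so $y$ may be forced to $0$, in which case one argues slightly differently, handling $\alpha_3$ via $z$ and then $\alpha_1$ via $t$. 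Then I would address $\alpha_2$: after clearing $\alpha_1,\alpha_3$ the coefficient $\alpha_2^*$ becomes a scalar multiple of $\alpha_2$ (by the surviving diagonal of $\phi$), so $\alpha_2$ can only be scaled; thus either $\alpha_2=0$, giving $\langle\nabla_4\rangle$, or $\alpha_2$ can be normalized to $1$, giving $\langle\nabla_2+\nabla_4\rangle$ — wait, but the statement lists $\langle\nabla_3+\nabla_4\rangle$, so in fact the better normalization order is to first use $t,p$ or $z$ to move $\alpha_2$ into the $\alpha_3$ slot (completing a "square" among $\alpha_1,\alpha_2,\alpha_3$ as a quadratic form) and then normalize the resulting $\alpha_3$; the quadratic form $\alpha_1x^2+\alpha_2xz+\alpha_3z^2$ in $(x,z)$, together with the affine translations by $t,p$, is what is really being reduced, and its orbit invariant is simply whether it is zero or nonzero on the relevant locus.

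Finally I would verify that $\langle\nabla_4\rangle$ and $\langle\nabla_3+\nabla_4\rangle$ are genuinely in different orbits: this is immediate because for $\langle\nabla_4\rangle$ the cocycle $\theta=\nabla_4$ has $\theta(x,\cdot)+\theta(\cdot,x)$ vanishing on a $2$-dimensional subspace containing $e_2$, so $\mathrm{Ann}(\theta)\cap\mathrm{Ann}(\mathfrak{N}_1)$ has one dimension, whereas for $\nabla_3+\nabla_4$ the presence of the $[\Delta_{22}]$ term changes the dimension of the annihilator, and this dimension is an automorphism invariant; alternatively one can simply note that no $\phi$ can introduce an $\alpha_3$ term once $\alpha_5=0$ and $\alpha_3=0$ unless $\alpha_2\neq 0$ or $\alpha_1\neq 0$, and check the last remaining case ($\alpha_4=0$, forcing $\alpha_5\neq 0$) is equivalent to $\alpha_5=0,\alpha_4=1$ by the $\mathrm{GL}_2$-action, so nothing new appears. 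The main obstacle I anticipate is bookkeeping inside the stabilizer of the covector $(1,0)$: one must be careful that the parameters still available ($z$, $t$, $p$, and the scalings) genuinely suffice to reduce the quadratic-plus-affine data $(\alpha_1,\alpha_2,\alpha_3)$ to the two claimed normal forms, and in particular to confirm that no intermediate case (such as $y=0$ being forced) produces an extra orbit; this is a finite but slightly delicate case analysis rather than a conceptual difficulty.
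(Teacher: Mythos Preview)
Your overall strategy --- first use the $\mathrm{GL}_2$-action on $(\alpha_4,\alpha_5)$ to normalize to $(1,0)$, then reduce $(\alpha_1,\alpha_2,\alpha_3)$ inside the stabilizer --- is sound and is genuinely different from the paper's approach. The paper works in the opposite direction: it applies explicit automorphisms to each candidate $\langle\nabla_4\rangle$ and $\langle\nabla_3+\nabla_4\rangle$ and checks that the resulting orbits partition all subspaces according to the vanishing of the polynomial $\alpha_4^2\alpha_3+\alpha_5(\alpha_1\alpha_5-\alpha_2\alpha_4)$. Your route is more conceptual; theirs is more computational but immediately produces the orbit invariant.

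However, your execution has a real gap. In the stabilizer of the covector $(1,0)$ you must take $y=0$; then, with $\alpha_4=1$, $\alpha_5=0$, the transformation formulas give $\alpha_3^*=\alpha_3 w^2$, $\alpha_1^*=\alpha_1x^2+\alpha_2xz+\alpha_3z^2+xt$, and $\alpha_2^*=\alpha_2xw+2\alpha_3zw+xp$. So $t$ kills $\alpha_1$, $p$ kills $\alpha_2$, and $\alpha_3$ can only be scaled --- the exact opposite of your first guess. Your suggestion to ``handle $\alpha_3$ via $z$'' is wrong, since $z$ does not appear in $\alpha_3^*$ when $y=0$. Once this is corrected, the two normal forms $\langle\nabla_4\rangle$ (if $\alpha_3=0$) and $\langle\nabla_3+\nabla_4\rangle$ (if $\alpha_3\neq 0$) drop out immediately, and distinctness is automatic from $\alpha_3^*=\alpha_3 w^2$.

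Your distinctness argument via annihilators is also incorrect: for $\theta=\Delta_{13}$ one has $\mathrm{Ann}(\theta)=\langle e_2\rangle$ (one-dimensional, not two), and since $\mathrm{Ann}(\mathfrak{N}_1)=\langle e_3\rangle$, both candidates satisfy $\mathrm{Ann}(\theta)\cap\mathrm{Ann}(\mathfrak{N}_1)=0$, so this invariant does not separate them. The clean way to see distinctness is exactly the scaling relation $\alpha_3^*=\alpha_3 w^2$ in the stabilizer, or equivalently the paper's polynomial invariant.
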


\begin{proof}
    It is easy to see, that we may assume $\alpha_4\neq 0.$
    \begin{enumerate}
        \item By aplying an automorphism $\phi$ in $\langle \nabla_4 \rangle$ with $x=1$, $y=\frac {\alpha_5}{\alpha_{4}}$, $z=1$, $w=\frac {\alpha_4^2+\alpha_5}{\alpha_4}$, $t=\alpha_1$, $p=\frac{ \alpha_2\alpha_4-\alpha_1\alpha_5}{\alpha_4}$, we obtain all subspaces $ \langle \sum\limits_{i=1}^5 \alpha_i \nabla_i  \rangle$ with $\alpha_4^2\alpha_3+ \alpha_5 (\alpha_1 \alpha_5-\alpha_2 \alpha_4)=0$.
        
        \item By aplying an automorphism $\phi$ in $\langle \nabla_3+\nabla_4 \rangle$ with $y=\frac{x\alpha_5}{\alpha_4}$, $z=1$, $w=\frac{x^2\alpha_5+\alpha_4}{x^2 \alpha_4}$, $t=\frac{\alpha_1-1}{x}$, $p=\frac{x^2(\alpha_1\alpha_5-\alpha_2\alpha_4+\alpha_5) 2+\alpha_4^2}{x^3\alpha_4}$,  $p=-\frac{x^2(\alpha_1\alpha_5-\alpha_2\alpha_4+\alpha_5) +2\alpha_4^2}{x^3\alpha_4}$,  where $x$ is a fourth root of $\frac{\alpha_4^4}{\alpha_3\alpha_4^2+\alpha_5(\alpha_1\alpha_5-\alpha_2\alpha_4)}$, we obtain all subspaces $ \langle \sum\limits_{i=1}^5 \alpha_i \nabla_i  \rangle$ with $\alpha_4^2\alpha_3+ \alpha_5 (\alpha_1 \alpha_5-\alpha_2 \alpha_4)\neq 0$.
    \end{enumerate}
    All cases have been considered and the lemma is proved.
\end{proof}

\subsection{$2$-dimensional central extensions of $\mathfrak{N}_1$}


  From the previous section, all orbits are generated by two-dimensional subspaces of the form $\langle \theta_1, \theta_2 \rangle$, where $\theta_1= \sum\limits_{i=1}^5 \alpha_i \nabla_i$ and $\theta_2=\nabla_4$ or $\theta_2=\nabla_3+\nabla_4$.
	Of course, we may assume $\alpha_4=0$.

\begin{lemma}
	The following subspaces generate all pairwise distinct orbits.
	
	\begin{multicols}{2}
		\begin{enumerate}[$\mathcal{O}_1=$]
			\item $\langle \nabla_1, \nabla_4\rangle$
			
			\item $\langle \nabla_2, \nabla_4\rangle$
			
			\item $\langle \nabla_3, \nabla_4\rangle$
			
			\item $\langle \nabla_1+\nabla_3, \nabla_4\rangle$
			
			\item $\langle \nabla_5, \nabla_4\rangle$
			
			\item $\langle \nabla_1+\nabla_5, \nabla_4\rangle$
			
			\item $\langle \nabla_2+\nabla_5, \nabla_4\rangle$
			
			\item ${\langle \nabla_1+\nabla_3+\nabla_5, \nabla_4\rangle}$
			
			\item $\langle \nabla_1, \nabla_3+\nabla_4\rangle$
			
			\item [$\mathcal{O}_{10}=$]$\langle \nabla_2, \nabla_3+\nabla_4\rangle$

		\end{enumerate}
		
	\end{multicols}
\end{lemma}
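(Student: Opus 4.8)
The plan is to start from the reduction already established: by the previous lemma every relevant orbit has a representative of the form $\langle \theta_1,\theta_2\rangle$ with $\theta_2=\nabla_4$ or $\theta_2=\nabla_3+\nabla_4$, and (since $\nabla_4,\nabla_3\in\langle\theta_1,\theta_2\rangle$ in the two cases respectively) we may subtract off multiples and assume $\alpha_4=0$ in $\theta_1=\sum_{i=1}^5\alpha_i\nabla_i$. So the work splits into two families according to the shape of $\theta_2$, and within each family we must enumerate the $\operatorname{Aut}(\mathfrak{N}_1)$-orbits of the remaining parameter $(\alpha_1,\alpha_2,\alpha_3,\alpha_5)$, being careful that the stabilizer of $\theta_2$ is what acts.

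First I would treat $\theta_2=\nabla_4$. The stabilizer of $\langle\nabla_4\rangle$ inside $\operatorname{Aut}(\mathfrak{N}_1)$ is cut out by the conditions $\alpha_4^*\in\langle\nabla_4\rangle$-coordinate only and $\alpha_5^*=0$ applied to $\nabla_4$, i.e. from the formulas $\alpha_4^*=(\alpha_4x+\alpha_5z)(wx-yz)$, $\alpha_5^*=(\alpha_4 y+\alpha_5 w)(wx-yz)$ with $(\alpha_4,\alpha_5)=(1,0)$ this forces $y=0$; so the stabilizer consists of lower-triangular-type matrices with $y=0$, acting on $(\alpha_1,\alpha_2,\alpha_3,\alpha_5)$ via the reduced versions of $\alpha_1^*,\alpha_2^*,\alpha_3^*,\alpha_5^*$. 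I would then case-split on whether $\alpha_5=0$. If $\alpha_5=0$ we are acting on the "quadratic form part" $\alpha_1 x^2+2\alpha_2 xz+\alpha_3 z^2$ (up to the parameters $t,p$ which only shift $\alpha_2$), giving the classical rank/diagonalization cases $\mathcal O_1,\dots,\mathcal O_4$ ($\langle\nabla_1\rangle$, $\langle\nabla_2\rangle$, $\langle\nabla_3\rangle$, $\langle\nabla_1+\nabla_3\rangle$). If $\alpha_5\neq0$, we may scale $\alpha_5$ to $1$, use $p$ to kill one more coefficient, and reduce to $\mathcal O_5,\mathcal O_6,\mathcal O_7,\mathcal O_8$ (the $\nabla_5$-cases), again by a rank analysis of what survives in $(\alpha_1,\alpha_2,\alpha_3)$ after the $\alpha_5$-twisted action. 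Throughout, one must check that the $\det=xw-yz\neq0$ and invertibility constraints are compatible with the chosen normalizing substitutions.

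Next I would treat $\theta_2=\nabla_3+\nabla_4$. Here $\nabla_3$ is no longer free, so the stabilizer of $\langle\nabla_3+\nabla_4\rangle$ is smaller and the relevant action on $\theta_1$ is more rigid; this is where the cases collapse to just $\mathcal O_9=\langle\nabla_1,\nabla_3+\nabla_4\rangle$ and $\mathcal O_{10}=\langle\nabla_2,\nabla_3+\nabla_4\rangle$. Concretely, since $\alpha_4=0$ in $\theta_1$, the condition that $\langle\theta_1,\nabla_3+\nabla_4\rangle$ lie in $T_2(\mathfrak{N}_1)$ and be non-split (Lemma on annihilator components) forces the $\nabla_5$-coordinate of $\theta_1$ to be expressible in terms of the others modulo $\nabla_3+\nabla_4$, so effectively $\theta_1$ reduces to an element of $\langle\nabla_1,\nabla_2\rangle$ modulo the span, and then the residual stabilizer identifies $\langle\nabla_1\rangle$-type with $\langle\nabla_1\rangle$-type and $\langle\nabla_2\rangle$-type with $\langle\nabla_2\rangle$-type, with no further merging between the two.

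Finally, to finish the lemma I must show the ten listed subspaces are \emph{pairwise non-equivalent}. This is the step I expect to be the main obstacle, since producing representatives is mechanical but ruling out coincidences requires invariants. I would use: (i) the dimension of $\operatorname{Ann}$ of the resulting algebra $\mathfrak{N}_1{}_\theta$ (or equivalently $\dim\bigl(\mathrm{Ann}(\theta_1)\cap\mathrm{Ann}(\theta_2)\cap\mathrm{Ann}(\mathfrak{N}_1)\bigr)$), (ii) whether the extension is commutative / the dimension of the "anticommutator" image, which separates the $\nabla_4$-only families (where $e_1e_2+e_2e_1$ still vanishes on the $\nabla_4$-part) from the $\nabla_5$ and $\nabla_3$ families, (iii) the rank of the symmetric part of the cocycle, i.e. $\dim\langle\theta_i+\check\theta_i\rangle$ where $\check\theta(x,y)=\theta(y,x)$, which distinguishes e.g. $\mathcal O_1,\mathcal O_2,\mathcal O_3,\mathcal O_4$ by the rank of the quadratic form, and (iv) for the few pairs not separated by (i)–(iii), a direct check that no $\phi\in\operatorname{Aut}(\mathfrak{N}_1)$ sends one $2$-space to the other by matching the transformation formulas for $\alpha_4^*,\alpha_5^*$ (which are the most constrained). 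Assembling these invariants into a table that distinguishes all ten pairs completes the proof.
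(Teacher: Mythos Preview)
Your treatment of the family $\theta_2=\nabla_4$ is essentially the paper's approach: compute the stabilizer ($y=0$), split on $\alpha_5$, and normalize. That part is fine as a sketch.

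The genuine gap is in the family $\theta_2=\nabla_3+\nabla_4$. Your claim that ``the condition that $\langle\theta_1,\nabla_3+\nabla_4\rangle$ lie in $T_2(\mathfrak{N}_1)$ and be non-split forces the $\nabla_5$-coordinate of $\theta_1$ to be expressible in terms of the others'' is false. Since $\nabla_3+\nabla_4$ already has $(\nabla_3+\nabla_4)(e_1,e_3)=1$, the vector $e_3$ is not in $\mathrm{Ann}(\theta_2)$, so the $T_2$-condition is automatic and imposes no restriction on $\theta_1$; likewise the non-split condition is just linear independence of $[\theta_1],[\theta_2]$. So $\theta_1=\alpha_1\nabla_1+\alpha_2\nabla_2+\alpha_3\nabla_3+\alpha_5\nabla_5$ with all four parameters free, and your reduction ``$\theta_1\in\langle\nabla_1,\nabla_2\rangle$'' does not hold.

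More importantly, restricting to the stabilizer of $\langle\nabla_3+\nabla_4\rangle$ is not enough here. The actual phenomenon (and the bulk of the paper's work in this case) is that most subspaces in $S_2$ lie in orbits already represented in $S_1$: using automorphisms that do \emph{not} preserve $\langle\nabla_3+\nabla_4\rangle$, the paper shows that elements of $S_2$ with $\alpha_5=0,\alpha_3\neq0$ land in $\mathcal O_3$ or $\mathcal O_4$, and elements with $\alpha_5\neq0$ land in $\mathcal O_6$, $\mathcal O_7$, or $\mathcal O_8$ (via explicit, sometimes rather intricate, substitutions, including one that requires solving a cubic in an auxiliary parameter). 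Only the residual cases $\alpha_3=\alpha_5=0$ give the genuinely new orbits $\mathcal O_9,\mathcal O_{10}$. Your outline skips this cross-identification entirely; without it you would either overcount orbits in $S_2$ (if you used the stabilizer correctly) or, as written, you have simply asserted the answer without justification. This is the step you need to supply.
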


\begin{proof}
	First, we observe that any orbit is generated by a subspace contained in one of the sets below:
	
	$S_1=\{\langle \alpha_1\nabla_1+\alpha_2\nabla_2+\alpha_3\nabla_3+\alpha_5\nabla_5, \nabla_4\rangle \,|\, \alpha_i\in \mathbb{C} \text{ are not all zero }\}$
	
	$S_2=\{\langle \alpha_1\nabla_1+\alpha_2\nabla_2+\alpha_3\nabla_3+\alpha_5\nabla_5, \nabla_3+\nabla_4\rangle \,|\, \alpha_i\in \mathbb{C} \text{ are not all zero }\}$
	
	In order to prove the result, we need to show that all elements in $S_1\cup S_2$ lie in one of the orbits $\mathcal O_1,\dots, \mathcal O_{10}$, and that these orbits are pairwise distinct.

\begin{enumerate}
	
	\item\label{1} Applying the automorphism $\phi$ in the subspace $\mathcal{O}_1= \langle \nabla_1, \nabla_4\rangle$ with $y=t=p=0$, we obtain any subspace in $S_1$ with $\alpha_2=\alpha_3=\alpha_5=0$. 

	\item\label{2} Applying the automorphism $\phi$ in the subspace $\mathcal{O}_2=\langle \nabla_2, \nabla_4 \rangle$ with $y=p=t=0$, $z=\alpha_1$, $w=\alpha_2$, we obtain any subspace in $S_1$ with $\alpha_2\neq0$, $\alpha_3=\alpha_5=0$.	Moreover, any automorphism $\phi$ applied to this orbit does not contain $\mathcal O_1$. Indeed, by applying an automorphism $\phi$ in $\langle \nabla_2, \nabla_4 \rangle$, to obtain an element of $S_1$ we must have $y=0$, $p=0$ and $t=0$, and this yields an element of the form $\langle z\nabla_1+w\nabla_2, \nabla_4 \rangle$, where $w$ must be non-zero. In particular, it does not contain the subspace $\mathcal{O}_1$. 
		
	\item Applying the automorphism $\phi$ in the subspace $\mathcal{O}_3=\langle \nabla_3, \nabla_4 \rangle$ with $x=w=1$, $y=p=t=0$,  $z=\frac{\alpha_2}{2}$, we obtain any subspace in $S_1$ with $\alpha_5=0$, $\alpha_3\neq 0$ and $\alpha_2^2=4\alpha_1\alpha_3$. Arguing as in (\ref{2}), any automorphism applied to this orbit does not contain the subspaces $\mathcal{O}_1$ and $\mathcal{O}_2$.
	
	\item Applying the automorphism $\phi$ in the subspace $\mathcal{O}_4=\langle \nabla_1+\nabla_3, \nabla_4 \rangle$ with  $x=\frac{\sqrt{4\alpha_1\alpha_3-\alpha_2^2}}{2\sqrt{\alpha_3}}$,  $y=t=p=0$, $z=\frac{\alpha_2}{2\sqrt{\alpha_3}}$ and $w=\sqrt{\alpha_3}$, we obtain any subspace in $S_1$ with $\alpha_5=0$, $\alpha_3\neq0$ and $\alpha_2^2\neq4\alpha_1\alpha_3$. As above, one verifies that this orbit does not contain the subspaces $\mathcal{O}_1$, $\mathcal{O}_2$ and $\mathcal{O}_3$.
	

	\item Applying the automorphism $\phi$ in the subspace $\mathcal{O}_5=\langle \nabla_5, \nabla_4 \rangle$ with  $y=t=p=0$, we obtain any subspace of $S_1$ with $\alpha_5=1$ and $\alpha_1=\alpha_2=\alpha_3=0$. Moreover, by applying an automorphism $\phi$ with $w=0$, one obtains an element of $S_1$ if and only if $p=t=0$, and the resulting subspace will be $\langle \nabla_5, \nabla_4 \rangle$. On the other hand, by applying an arbitrary automorphism $\phi$, with $w\neq0$,$y=0$, we obtain the subspace $\langle \frac{t}{\delta}\nabla_2+\frac{p}{\delta}\nabla_3+\nabla_5, \frac{t}{\delta}\nabla_1+\frac{p}{\delta}\nabla_2+\nabla_4 \rangle$, where $\delta=xw-yz$. This is an element of $S_1$ if and only if $p=t=0$, and this also yields the subspace $\langle \nabla_5, \nabla_4 \rangle$. As a consequence, this orbit contains no other subspaces of $S_1$ other than $\mathcal O_5$.
	
	\item Applying the automorphism $\phi$ in the subspace $\mathcal{O}_6=\langle \nabla_1+\nabla_5, \nabla_4 \rangle$ with $x=\alpha_1$, $y=t=p=0$, $w=1$, we obtain any subspace of $S_1$ with $\alpha_5=1$, $\alpha_1\neq0$, $\alpha_2=\alpha_3=0$. Arguing similarly as above, we conclude that this orbit does not contain the subspaces $\mathcal O_1, \dots, \mathcal O_5$.
	

	\item\label{7} Applying the automorphism $\phi$ in the subspace $\mathcal{O}_7=\langle \nabla_2+\nabla_5, \nabla_4 \rangle$ with $x=1$, $z=\frac{\alpha_1}{\alpha_2^2}$, $y=t=p=0$, $w=\frac{1}{\alpha_2}$, we obtain any subspace of $S_1$ with $\alpha_5=1$, $\alpha_3=0$, and $\alpha_2\neq0$. \\
		Applying the automorphism $\phi$ in the same subspace with 
		$x=\frac{\alpha_2}{2}$, $y=\alpha_3$, $z=\frac{1}{\alpha_3}$, $w=0$, $p=-\alpha_3$,  $t=-\frac{\alpha_2}{2}$, we obtain any subspace of $S_1$ with $\alpha_5=1$, $\alpha_3\neq0$ and $4\alpha_1\alpha_3=\alpha_2^2$.\\		
	By applying an arbitrary automorphism $\phi$ in this orbit, it results in a subspace lying in  $S_1$ if and only if $y=0$, or $w=0$, which result in the cases already considered in (\ref{7}).
		 	
	\item\label{8} Applying the automorphism $\phi$ in the subspace $\mathcal{O}_8=\langle \nabla_1+\nabla_3+\nabla_5, \nabla_4 \rangle$ with $x=\frac{1}{\alpha_3}$, $y=p=t=0$, $z=\frac{\alpha_2}{\alpha_3\sqrt{4\alpha_1\alpha_3-\alpha_2^2}}$, $w=\frac{2}{\sqrt{4\alpha_1\alpha_3-\alpha_2^2}}$, we obtain any subspace in $S_1$ with $\alpha_5=1$, $\alpha_3\neq0$ and $4\alpha_1\alpha_3\neq \alpha_2^2$.\\
	By applying an arbitrary automorphism $\phi$ in $\mathcal O_8$, it results in a subspace lying in $S_1$ if and only if $y=t=p=0$. And the resulting subspaces are different from the subspaces $\mathcal O_1, \dots, \mathcal O_7$.

\end{enumerate}

As a consequence of the above, we obtain that any subspace in $S_1$ lies in the orbit generated by one (and only one) of the following subspaces:

	\begin{multicols}{2}
	\begin{enumerate}[$\mathcal{O}_1=$]
		\item $\langle \nabla_1, \nabla_4\rangle$
		
		\item $\langle \nabla_2, \nabla_4\rangle$
		
		\item $\langle \nabla_3, \nabla_4\rangle$
		
		\item $\langle \nabla_1+\nabla_3, \nabla_4\rangle$
		
		\item $\langle \nabla_5, \nabla_4\rangle$
		
		\item $\langle \nabla_1+\nabla_5, \nabla_4\rangle$
		
		\item $\langle \nabla_2+\nabla_5, \nabla_4\rangle$
		
		\item ${\langle \nabla_1+\nabla_3+\nabla_5, \nabla_4\rangle}$
		
	\end{enumerate}
	
\end{multicols}
 
Moreover, as mentioned in (\ref{1})-(\ref{8}), $\mathcal O_1, \dots, \mathcal O_8$ generate pairwise distinct orbits.

\begin{enumerate}[(1)]
	  \setcounter{enumi}{8}
	\item Applying the automorphism $\phi$ in the subspace $\mathcal O_9=\langle \nabla_1,\nabla_3+\nabla_4 \rangle$, with $x=w=1$, $y=z=p=t=0$,  we obtain any subspace in $S_2$ with $\alpha_2=\alpha_3=\alpha_5=0$.	Moreover, any automorphism $\phi$ applied to this orbit does not contain elements of $S_1$. In particular, the orbit generated by $\mathcal O_9$ is different from the ones generated by $\mathcal O_1, \dots, \mathcal O_8$.

	\item Applying the automorphism $\phi$ in the subspace $\mathcal O_{10}=\langle \nabla_2,\nabla_3+\nabla_4 \rangle$, with $x=1$, $y=0$, $z=\alpha_1$, $w=1$, $p=-2\alpha_1$, $t=-{\alpha_1^2}$, we obtain any subspace in $S_2$ with $\alpha_5=\alpha_3=0$, $\alpha_2=1$. Arguing as in the previous case, we obtain that this orbit does not contain $\mathcal O_1, \dots, \mathcal O_8$. Moreover, if $\phi$ is an automorphism, applying it to $\mathcal O_{10}$ it results in an element of $S_2$ if and only if $y=0$, $w=x^2$, $p=-2xz$ and $t=-\frac{z^2}{x}$. As a consequence, the resulting subspace cannot be $\mathcal O_{9}$.
	
	\item Applying the automorphism $\phi$ in the subspace $\mathcal O_3=\langle \nabla_3,\nabla_4 \rangle$, with $x=1$, $y=0$, $z=\frac{\alpha_2}{2}$, $w=1$, $p=-{\alpha_2}$, $t=-\frac{\alpha_2^2}{4}$, we obtain any subspace in $S_2$ with $\alpha_5=0$, $\alpha_3=1$ and $4\alpha_1-\alpha_2^2=0$.
	
	\item Applying the automorphism $\phi$ in the subspace $\mathcal O_4=\langle \nabla_1+\nabla_3,\nabla_4 \rangle$, with $x=1$, $y=0$, $z=\frac{\alpha_2}{\sqrt{4\alpha_1-\alpha_2^2}}$, $w=\frac{2}{\sqrt{4\alpha_1-\alpha_2^2}}$, $p=-\frac{2\alpha_2}{\sqrt{4\alpha_1-\alpha_2^2}}$, $t=-\frac{2\alpha_1}{\sqrt{4\alpha_1-\alpha_2^2}}$, we obtain any subspace in $S_2$ with $\alpha_5=0$, $\alpha_3=1$ and $4\alpha_1-\alpha_2^2\neq 0$.
	
	
	\item Applying the automorphism $\phi$ in the subspace $\mathcal O_6=\langle \nabla_1+\nabla_5,\nabla_4 \rangle$, with $x=\frac{\alpha_3}{3}$, $y=-1$, $z=1$, $w=0$, $p=\frac{2\alpha_3}{3}$, $t=-\frac{\alpha_3^2}{9}$, we obtain any subspace in $S_2$ with $\alpha_5=1$, and $\alpha_1,\alpha_2,\alpha_3\in \mathbb C$ satisfying $3\alpha_2+\alpha_3^2=0$  and $27\alpha_1-\alpha_3^3=0$.

	\item Applying the automorphism $\phi$ in the subspace $\mathcal O_8=\langle \nabla_1+\nabla_3+\nabla_5,\nabla_4 \rangle$, with $x=\frac{-\alpha_3^3-\alpha_3s+27\alpha_1}{s^2}$, $y=\frac{3}{s}$, $z=-\frac{\sqrt{3}(-\alpha_3^3+\alpha_3s+27\alpha_1)}{s^2}$, $w=\frac{3\sqrt{3}}{s}$, $p=\frac{2\sqrt{3}(-2\alpha_3^4-s^2+54\alpha_1\alpha_3)}{s^3}$ and $t=\frac{2}{\sqrt{3}}\frac{\alpha_3^5-s\alpha_3^3+\alpha_3s^2-27\alpha_1\alpha_3^2+27s\alpha_1}{s^3}$, where $s$ satisfies $s^3+(27\alpha_1-\alpha_3^3)^2=0$, we obtain any subspace in $S_2$ with $\alpha_5=1$ and $\alpha_1,\alpha_2, \alpha_3\in \mathbb C$ satisfying $3\alpha_2+\alpha_3^2=0$   and $27\alpha_1-\alpha_3^3\neq0$.	

	
	\item Last, we remark that the subspaces $\langle \nabla_2+\nabla_5,\nabla_3+\nabla_4 \rangle$ and $\mathcal O_8=\langle \nabla_1+\nabla_3+\nabla_5,\nabla_4 \rangle$ generate the same orbit. Indeed, by applying the automorphism $\phi$ in the former with $x=w=t=0$, $z=1$ and $y=i$, $p=-i$, we obtain the later.

	Applying the automorphism $\phi$ in the subspace $\langle \nabla_2+\nabla_5,\nabla_3+\nabla_4 \rangle$, with $x=\frac{2-3r^2}{\sqrt{4-27\alpha^2}}$, $y=\frac{-3r}{\sqrt{4-27\alpha^2}}$, $z=\frac{2r+6\alpha-9r^2\alpha}{4-27\alpha^2}$, $w=\frac{9\alpha r-6r^2+4}{4-27\alpha^2}$, $p=\frac{r}{\sqrt{4-27\alpha^2}}$ and $t=\frac{r^2}{\sqrt{4-27\alpha^2}}$, where $r$ satisfies $r^3-r-\alpha=0$, we obtain any element of the type $\langle\alpha \nabla_1+ \nabla_2+\nabla_5,\nabla_3+\nabla_4 \rangle$ in $S_2$ with $\alpha\in \mathbb{C}\setminus \{-\frac{2}{3\sqrt{3}}, \frac{2}{3\sqrt{3}}\}$.
	
	The element $\langle\frac{2}{3\sqrt{3}} \nabla_1+ \nabla_2+\nabla_5,\nabla_3+\nabla_4 \rangle$ generate the same orbit as $\mathcal O_7=\langle \nabla_2+\nabla_5,\nabla_4 \rangle$. Indeed, by applying the automorphism $\phi$ in the later with $x = {-3}$, $y = -{3\sqrt{3}}$, $z = \frac{2}{3\sqrt{3}}$, $w = -\frac{1}{3}$, $p= \sqrt{3}$ and $ t = 2$, we obtain the former.
	
	The element $\langle-\frac{2}{3\sqrt{3}} \nabla_1+ \nabla_2+\nabla_5,\nabla_3+\nabla_4 \rangle$ generate the same orbit as $\mathcal O_7= \langle \nabla_2+\nabla_5,\nabla_4 \rangle$. Indeed, by applying the automorphism $\phi$ in the later with $x = {3}$, $y = -{3\sqrt{3}}$, $z = -\frac{2}{3\sqrt{3}}$, $w = -\frac{1}{3}$, $p= \sqrt{3}$ and $ t = -2$, we obtain the former.
	
	As a consequence, for any $\alpha\in \mathbb C$, the subspaces $\langle\alpha \nabla_1+ \nabla_2+\nabla_5,\nabla_3+\nabla_4 \rangle$ belog to some of the orbits considered above.

 	Let now $\alpha_1$ $\alpha_2$, $\alpha_3\in \mathbb{C}$ satisfy $\alpha_3^2+3\alpha_2\neq 0$ and define $\alpha=\frac{2\alpha_3^3+9\alpha_2\alpha_3+27\alpha_1}{3\sqrt{3}(\alpha_3^2+3\alpha_2)^{3/2}}$. By  applying the automorphism $\phi$ in $\langle\alpha \nabla_1+ \nabla_2+\nabla_5,\nabla_3+\nabla_4 \rangle$, with $x=\frac{\sqrt{3}}{\sqrt{\alpha_3^2+3\alpha_2}}$, $y=0$, $z=\frac{-\alpha_3}{\alpha_3^2+3\alpha_2}$, $w=\frac{3}{\alpha_3^2+3\alpha_2}$,  $p=\frac{2\sqrt{3}\alpha_3}{(\alpha_3^2+3\alpha_2)^{3/2}}$ and $t=\frac{-\alpha_3^2}{\sqrt{3}(\alpha_3^2+3\alpha_2)^{3/2}}$, we obtain any element in $S_2$ with $\alpha_3^2+3\alpha_2\neq 0$.

\end{enumerate}

	Hence, all subspaces lie in some of the orbits generated by the subspaces $\mathcal{O}_1, \dots, \mathcal{O}_{10}$, which generate pairwise distinct orbits. The Lemma is proved.

\end{proof}

\subsection{$3$-dimensional central extensions of $\mathfrak{N}_1$}

We may assume that a $3$-dimensional subspace is generated by
$\theta_1, \theta_2$ and $\theta_3$, where $\langle \theta_2, \theta_3\rangle\in\{ \mathcal O_1, \dots, \mathcal O_{10}\}$.

Before the main result of this subsection, we need the following technical result.

\begin{lemma}\label{solution}
	Let $\alpha_1, \alpha_2\in \mathbb{C}\setminus \{0\}$. The system of equations	
	\begin{equation}\label{system}\left\{\begin{array}{rcl}
		\frac{w(w^2-3y^2)}{(w^2+y^2)^2} & = & \alpha_1\\
		\frac{y(3w^2-y^2)}{(w^2+y^2)^2} & = & \alpha_2\\
		\end{array} \right.\end{equation}		
has a solution if and only if $\alpha_1^2+\alpha_2^2\neq0$.
\end{lemma}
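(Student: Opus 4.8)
The plan is to recognize the left-hand side of (\ref{system}) as the real and imaginary parts of a single holomorphic expression in the complex variable $\zeta = w + iy$, and then reduce the solvability question to an elementary statement about $\zeta$. Concretely, I would compute $\zeta^3 = (w+iy)^3 = (w^3 - 3wy^2) + i(3w^2y - y^3) = w(w^2-3y^2) + i\,y(3w^2-y^2)$, while $|\zeta|^2 = w^2 + y^2$. Hence the system (\ref{system}) says exactly that
\[
\frac{\zeta^3}{|\zeta|^4} = \alpha_1 + i\alpha_2,
\]
and since $\zeta^3/|\zeta|^4 = \zeta^3/(\zeta\bar\zeta)^2 = \bar\zeta^{-2}\zeta^{-1}\cdot\zeta^2 \cdot \zeta = \ldots$ — more cleanly, $\zeta^3/|\zeta|^4 = \overline{(\,\overline{\zeta}^3/|\zeta|^4\,)}$, and one checks $\overline{\zeta}^{\,3}/|\zeta|^4 = \overline{\zeta}^{\,3}/(\zeta\overline{\zeta})^2 = \overline{\zeta}/\zeta^2$. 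So the system is equivalent to $\overline{\zeta}\,/\,\zeta^{2} = \overline{\alpha_1 + i\alpha_2}$, or, writing $\eta = \alpha_1 + i\alpha_2 \in \Co$, to finding $\zeta \in \Co$ with $\overline{\zeta} = \overline{\eta}\,\zeta^2$.

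Next I would solve this reduced equation. Taking moduli gives $|\zeta| = |\eta|\,|\zeta|^2$, so any nonzero solution has $|\zeta| = |\eta|^{-1}$; this already forces $\eta \neq 0$, i.e. $\alpha_1^2 + \alpha_2^2 \neq 0$, which is the necessity direction. For sufficiency, assume $\eta \neq 0$; write $\eta = \rho e^{i\psi}$ with $\rho > 0$. Looking for $\zeta = r e^{i\varphi}$, the modulus equation gives $r = 1/\rho$, and the argument equation $-\varphi \equiv -\psi + 2\varphi \pmod{2\pi}$ gives $3\varphi \equiv \psi \pmod{2\pi}$, which always has a solution $\varphi = \psi/3$. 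Thus $\zeta = \rho^{-1} e^{i\psi/3}$ works, and unwinding $\zeta = w + iy$ produces a genuine real solution $(w,y)$ of (\ref{system}) with $(w,y) \neq (0,0)$. I should also remark that the hypothesis $\alpha_1,\alpha_2 \in \Co \setminus \{0\}$ in the statement plays no essential role beyond what is used elsewhere; the equivalence holds for all $(\alpha_1,\alpha_2)$, and the content is simply "$\eta \neq 0$".

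The only mild subtlety — and the step I would be most careful with — is the bookkeeping of conjugates: making sure the algebraic identity $\frac{\zeta^3}{|\zeta|^4} = \frac{\overline{\zeta}}{\zeta^2}$ (equivalently $\zeta^3 \zeta^2 = \overline{\zeta}\,|\zeta|^4 = \overline{\zeta}\,\zeta^2\overline{\zeta}^2$, i.e. $\zeta^5 = \overline{\zeta}^{\,3}\zeta^{2}$, which is false in general!) is stated correctly. In fact the correct bookkeeping is the one above: the system is $\frac{\zeta^3}{|\zeta|^4} = \eta$ directly, with no conjugation, since the numerators match term by term; then $|\zeta| = |\eta|^{-1}$ and $\arg(\zeta^3) = \arg\eta$, giving $\zeta = |\eta|^{-1} e^{i\arg(\eta)/3}$. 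So the clean statement is: (\ref{system}) $\iff$ $\zeta^3 = \eta\,|\zeta|^4$ with $\zeta = w+iy$ and $\eta = \alpha_1 + i\alpha_2$, which has a solution $\zeta \neq 0$ iff $\eta \neq 0$ iff $\alpha_1^2 + \alpha_2^2 \neq 0$. I would present the proof in exactly this order: (i) rewrite via $\zeta,\eta$; (ii) necessity by taking moduli; (iii) sufficiency by exhibiting $\zeta = |\eta|^{-1}e^{i\arg(\eta)/3}$ and translating back.
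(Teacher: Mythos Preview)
Your argument has a fundamental gap: you are treating $w,y$ (and implicitly $\alpha_1,\alpha_2$) as \emph{real} numbers, but in this paper everything is over $\mathbb{C}$, so $w,y\in\mathbb{C}$ are complex unknowns. Once $w,y$ are complex, the identity $|\zeta|^2=w^2+y^2$ for $\zeta=w+iy$ fails (e.g.\ $w=i$, $y=0$ gives $|\zeta|^2=1$ but $w^2+y^2=-1$), so your rewriting of the system as $\zeta^3/|\zeta|^4=\eta$ is invalid, and the ``take moduli / arguments'' step has no meaning. Your phrase ``a genuine real solution $(w,y)$'' confirms the misreading. Moreover, for complex $\alpha_1,\alpha_2$ the condition $\eta=\alpha_1+i\alpha_2\neq 0$ is \emph{not} equivalent to $\alpha_1^2+\alpha_2^2\neq 0$: with $\alpha_1=1$, $\alpha_2=-i$ one has $\eta=2\neq 0$ but $\alpha_1^2+\alpha_2^2=0$. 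The correct factorization is $\alpha_1^2+\alpha_2^2=(\alpha_1+i\alpha_2)(\alpha_1-i\alpha_2)$, and both factors must be nonzero.

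Your idea can be salvaged, but only by replacing $\bar\zeta$ with the \emph{independent} variable $\xi=w-iy$: then $w^2+y^2=\zeta\xi$, and the system becomes $\zeta/\xi^2=\alpha_1+i\alpha_2$ together with $\xi/\zeta^2=\alpha_1-i\alpha_2$, whose solvability is equivalent to both right-hand sides being nonzero, i.e.\ to $\alpha_1^2+\alpha_2^2\neq 0$. The paper takes a different route: it sets $k=y/w$, reduces to the cubic $k^3-3qk^2-3k+q=0$ with $q=\alpha_2/\alpha_1$, and checks directly that this cubic has a root $k\notin\{0,i,-i\}$ (the values for which $w^2+y^2=0$ or $w=0$) precisely when $q\neq\pm i$, i.e.\ when $\alpha_1^2+\alpha_2^2\neq 0$.
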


\begin{proof}
	Let $(y,w)$ be a solution of (\ref{system}). Then $yw\neq0$. Let $k=\frac{y}{w}$. Substituting in (\ref{system}), we obtain 
	$\left\{\begin{array}{rcl}	
	{\frac {1-3k^2}{w \left( {k}^{2}+1 \right) ^{2}}}& = & \alpha_1\\
	{\frac {( 3-k^2 ) k}{w \left( {k}^{2}+1 \right) ^{2}}}& = & \alpha_2\\
	\end{array} \right.$\\	
Then, 
\begin{equation}\label{w}
w={\frac {1-3k^2}{\alpha_1 ( {k}^{2}+1 ) ^{2}}}={\frac {(3-k^2) k}{\alpha_2 ( {k}^{2}+1 ) ^{2}}}\end{equation} and $k$ must satisfy the equation:	
	$\alpha_1k^3-3\alpha_2k^2-3\alpha_1 k+\alpha_2=0$.
	If we denote $q=\frac{\alpha_2}{\alpha_1}$, the above equation becomes \begin{equation}\label{poly}
	k^3-3qk^2-3k+q=0.
	\end{equation}
	
	Once the above equation has a solution $k\not\in \{0,i,-i\}$, we obtain $w$ in (\ref{w}), and we find a solution to the system (\ref{system}). Of course, $k=0$ is not a solution to this polynomial equation, otherwise $\alpha_2=0$. 
	
	Observe that $k=i$ is a solution to (\ref{poly}) if and only if $q=i$, and $k=-i$ is a solution to (\ref{poly}) if and only if $q=-i$. Moreover, if $q=i$, (\ref{poly}) becomes $(k-i)^3=0$ and if $q=-i$, it  becomes $(k+i)^3=0$
	
	As a consequence, equation (\ref{poly}) has a solution different from $i, -i, 0$ if and only if $q\neq \pm i$, which is equivalent to $\alpha_1^2+\alpha_2^2\neq0$.	
\end{proof}

The proof of the next lemma is similar to the above, and thus is omitted.

\begin{lemma}\label{eq2}
	Let $\alpha_1, \alpha_2\in \mathbb{C}\setminus \{0\}$. The system of equations	
	\begin{equation}\label{system2}
	\left\{\begin{array}{rcl} {\frac {({w}^{2}+{y}^{2})^3 \alpha_{2}^{2}}{4 \left( {w}^{2
			}-3\,{y}^{2} \right) ^{2}{w}^{2}}} & = & \alpha_{1}\\
	\frac{(3w^2-y^2)y}{y^2+w^2} & = & 1
	\end{array}\right.	
	\end{equation}		
	has a solution if and only if $4\alpha_1-\alpha_2^2\neq0$.
\end{lemma}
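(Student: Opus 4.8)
The plan is to follow the strategy used for Lemma~\ref{solution}: collapse the system~\eqref{system2} to a single polynomial equation in one variable via the substitution $k=y/w$, and then read off the solvability criterion from the location of its roots.

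First I would record the non-degeneracy constraints forced by~\eqref{system2}: any solution has $w\neq 0$ (otherwise the first equation is undefined) and $y\neq 0$ (otherwise the second equation reads $0=1$), so $k:=y/w$ is a well-defined nonzero scalar, and in addition $w^2-3y^2$, $3w^2-y^2$ and $w^2+y^2$ are all nonzero. Writing $y=kw$, the second equation simplifies to $\frac{kw(3-k^2)}{k^2+1}=1$, which determines $w=\frac{k^2+1}{k(3-k^2)}$; substituting $y=kw$ into the first equation, every power of $w$ cancels and it reduces to $\alpha_2^2(1+k^2)^3=4\alpha_1(1-3k^2)^2$. Putting $m=k^2$, this is the cubic
\[ P(m):=\alpha_2^2m^3+(3\alpha_2^2-36\alpha_1)m^2+(3\alpha_2^2+24\alpha_1)m+(\alpha_2^2-4\alpha_1)=0. \]
These steps are reversible: from any root $m$ of $P$ lying outside the forbidden set $\{0,\tfrac13,3,-1\}$ one recovers, by choosing $k$ with $k^2=m$ and then setting $w$ as above and $y=kw$, an honest solution of~\eqref{system2}; the four excluded values are exactly those making one of the denominators $k$, $3-k^2$, $1-3k^2$, $k^2+1$ vanish.

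Since $\alpha_2\neq 0$, the polynomial $P$ is genuinely of degree three and therefore has a root in $\mathbb{C}$; what remains is to decide when that root can be forced into the forbidden set. A direct evaluation should give $P(\tfrac13)=\tfrac{64}{27}\alpha_2^2\neq 0$ and $P(-1)=-64\alpha_1\neq 0$ (this is where the hypothesis $\alpha_1\neq 0$ is used), so $\tfrac13$ and $-1$ are never roots, whereas $P(0)=\alpha_2^2-4\alpha_1$ and $P(3)=64(\alpha_2^2-4\alpha_1)$, so $0$ and $3$ are roots precisely when $4\alpha_1=\alpha_2^2$. Hence, if $4\alpha_1-\alpha_2^2\neq 0$, every root of $P$ is automatically admissible and yields a solution of~\eqref{system2}. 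If $4\alpha_1-\alpha_2^2=0$, substituting $\alpha_2^2=4\alpha_1$ factors $P(m)=4\alpha_1\,m(m-3)^2$, whose only roots $0$ and $3$ are both forbidden --- the first forces $y=0$ and the second forces $3w^2-y^2=0$, each incompatible with the second equation --- so no solution exists.

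I expect the only genuinely delicate point to be the bookkeeping of which denominators may vanish under the substitution $k=y/w$: this is what pins down the forbidden set $\{0,\tfrac13,3,-1\}$ and makes the case $4\alpha_1=\alpha_2^2$ a real obstruction rather than an artifact of the method. The algebra itself --- the cancellation of the $w$-powers in the first equation, the evaluations of $P$ at the four values, and the factorization when $4\alpha_1=\alpha_2^2$ --- is routine.
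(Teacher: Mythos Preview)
Your argument is correct and matches the approach the paper signals: it explicitly says the proof is ``similar to the above, and thus is omitted,'' meaning the intended proof imitates Lemma~\ref{solution} by reducing via $k=y/w$ to a polynomial whose forbidden roots encode the obstruction. Your only (harmless) extra step is the substitution $m=k^2$, which is natural here because after cancelling powers of $w$ the first equation involves only even powers of $k$; the evaluations $P(0)=\alpha_2^2-4\alpha_1$, $P(3)=64(\alpha_2^2-4\alpha_1)$, $P(\tfrac13)=\tfrac{64}{27}\alpha_2^2$, $P(-1)=-64\alpha_1$ and the factorization $P(m)=4\alpha_1\,m(m-3)^2$ when $\alpha_2^2=4\alpha_1$ are all correct.
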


\begin{lemma}
	The following subspaces generate all pairwise distinct orbits.
	
	\begin{multicols}{2}
		\begin{enumerate}[$\mathcal{P}_1=$]
			\item $\langle \nabla_2, \nabla_1, \nabla_4\rangle$
			
			\item $\langle \nabla_3, \nabla_1, \nabla_4\rangle$
			
			\item $\langle \nabla_5, \nabla_1, \nabla_4\rangle$
			
			\item $\langle \nabla_3+\nabla_5, \nabla_1, \nabla_4\rangle$
			
			\item $\langle \nabla_3, \nabla_2,\nabla_4 \rangle$

			\item $\langle \nabla_1 +\nabla_3, \nabla_2,\nabla_4 \rangle$			
			
			\item $\langle \nabla_5, \nabla_2,\nabla_4 \rangle$
			
			\item $\langle \nabla_1+\nabla_5, \nabla_2,\nabla_4 \rangle$
			
			\item $\langle \nabla_1+\nabla_5,\nabla_3, \nabla_4 \rangle$
			
			\item [$\mathcal P_{10}=$] $\langle \nabla_1+\nabla_5, \nabla_1+\nabla_3,\nabla_4 \rangle$
			
			\item [$\mathcal P_{11}=$] $\langle \nabla_2, \nabla_1,\nabla_3+\nabla_4 \rangle$
			
		\end{enumerate}
		
	\end{multicols}
\end{lemma}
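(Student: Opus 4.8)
The strategy is to bootstrap from the $2$-dimensional classification already obtained. As recalled just before Lemma~\ref{solution}, any $3$-dimensional subspace $W$ of $\operatorname{H}^2(\mathfrak{N}_1,\mathbb C)$ in $T_3(\mathfrak{N}_1)$ may be written $W=\langle\theta_1,\theta_2,\theta_3\rangle$ with $\langle\theta_2,\theta_3\rangle$ one of $\mathcal O_1,\dots,\mathcal O_{10}$: indeed $\operatorname{Ann}(\mathfrak{N}_1)=\langle e_3\rangle$ and $e_3\in\operatorname{Ann}(\nabla_i)$ exactly for $i\in\{1,2,3\}$, so $W\in T_3$ forces $W\not\subseteq\langle\nabla_1,\nabla_2,\nabla_3\rangle$, whence $W$ contains a plane lying in $T_2$, and by the $2$-dimensional classification some automorphism carries that plane onto an $\mathcal O_k$. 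Thus, up to the action of $\operatorname{Aut}(\mathfrak{N}_1)$ we may assume $W=\langle\theta_1\rangle\oplus\mathcal O_k$ with $\theta_1=\sum_{i=1}^5\alpha_i\nabla_i$; since $\theta_1$ only matters modulo $\mathcal O_k$ and up to scalar, we normalise away its components along $\mathcal O_k$ (in particular its $\nabla_4$-component), exactly as in the previous subsection.

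With this reduction, I would run a case analysis over $k=1,\dots,10$. For each $\mathcal O_k$ one computes the stabiliser $\operatorname{Stab}(\mathcal O_k)\le\operatorname{Aut}(\mathfrak{N}_1)$, lets it act on the remaining parameters of $\theta_1$, and reads off orbit representatives; these are the candidate $3$-dimensional subspaces attached to $\mathcal O_k$. The two technical results Lemma~\ref{solution} and Lemma~\ref{eq2} are exactly what makes several of these normalisations go through: in the cases built on $\nabla_3+\nabla_4$, and in those where one must kill a polynomial expression in the entries of the automorphism, solvability over $\mathbb C$ of the normalising system is equivalent to the non-vanishing of a discriminant-type quantity (here $\alpha_1^2+\alpha_2^2\neq0$, respectively $4\alpha_1-\alpha_2^2\neq0$), and the two lemmas isolate precisely those solvability statements.

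The list produced this way is still possibly redundant, because two non-conjugate planes $\mathcal O_k$, $\mathcal O_\ell$ can sit inside one common $3$-dimensional orbit; so I would next detect and eliminate such coincidences by exhibiting explicit automorphisms, in the same spirit as the merges carried out for the $2$-dimensional extensions (e.g.\ the identification of $\langle\nabla_2+\nabla_5,\nabla_3+\nabla_4\rangle$ with $\mathcal O_8$). The organising device for both the merging and the final pairwise-distinctness claim is the invariant sending $W\in T_3$ to the set of indices $k$ such that some $T_2$-plane of $W$ is $\operatorname{Aut}(\mathfrak{N}_1)$-equivalent to $\mathcal O_k$; this set is an isomorphism invariant and already separates most of the $\mathcal P_j$. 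For the few pairs sharing the same invariant one descends to a finer one — a further refinement of the same bookkeeping inside $W$, or ultimately the algebra-level invariants (dimension of the annihilator, the square of the algebra, and so on) of the extensions $[\mathfrak{N}_1]^3_{\bullet}$. Collecting the surviving representatives yields exactly $\mathcal P_1,\dots,\mathcal P_{11}$.

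The main obstacle is not any single computation but the combined weight of the case analysis and the merging step: ten separate stabiliser computations, a web of parameter normalisations each resting on an over-$\mathbb C$ solvability claim (the reason Lemmas~\ref{solution} and~\ref{eq2} are proved in advance), and then a careful audit to remove the duplicates created whenever a $3$-dimensional subspace contains two essentially different standard planes $\mathcal O_k$.
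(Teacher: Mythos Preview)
Your proposal is correct and follows essentially the same approach as the paper: reduce to $W=\langle\theta_1\rangle\oplus\mathcal O_k$ using the $2$-dimensional classification, run a case analysis over $k$ (with Lemmas~\ref{solution} and~\ref{eq2} supplying exactly the solvability needed for certain normalisations), and then merge coincidences and verify pairwise distinctness. The paper's execution is slightly more ad hoc---it exhibits explicit automorphisms case by case rather than computing stabilisers, and checks distinctness by directly showing $\mathcal P_j$ cannot be carried into earlier $\mathcal P_i$ rather than via your $\{\,k:\text{$W$ contains a plane in the orbit of }\mathcal O_k\,\}$ invariant---but the architecture is the same.
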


\begin{proof}
	First, we observe that any orbit is generated by a subspace contained in one of the sets below:
	
	$S_1=\{\langle \alpha_2\nabla_2+\alpha_3\nabla_3+\alpha_5\nabla_5, \nabla_1, \nabla_4\rangle \,|\, \alpha_i\in \mathbb{C} \text{ are not all zero }\}$

	$S_2=\{\langle \alpha_1\nabla_1+\alpha_3\nabla_3+\alpha_5\nabla_5, \nabla_2, \nabla_4\rangle \,|\, \alpha_i\in \mathbb{C} \text{ are not all zero }\}$
	
	$S_3=\{\langle \alpha_1\nabla_1+\alpha_2\nabla_2+\alpha_5\nabla_5, \nabla_3, \nabla_4\rangle \,|\, \alpha_i\in \mathbb{C} \text{ are not all zero }\}$
	
	$S_4=\{\langle \alpha_1\nabla_1+\alpha_2\nabla_2+\alpha_5\nabla_5, \nabla_1+\nabla_3, \nabla_4\rangle \,|\, \alpha_i\in \mathbb{C} \text{ are not all zero }\}$
	
	$S_5=\{\langle \alpha_1\nabla_1+\alpha_2\nabla_2+\alpha_3\nabla_3, \nabla_5, \nabla_4\rangle \,|\, \alpha_i\in \mathbb{C} \text{ are not all zero }\}$
	
	$S_6=\{\langle \alpha_1\nabla_1+\alpha_2\nabla_2+\alpha_3\nabla_3, \nabla_1+\nabla_5, \nabla_4\rangle \,|\, \alpha_i\in \mathbb{C} \text{ are not all zero }\}$
	
	$S_7=\{\langle \alpha_1\nabla_1+\alpha_2\nabla_2+\alpha_3\nabla_3, \nabla_2+\nabla_5, \nabla_4\rangle \,|\, \alpha_i\in \mathbb{C} \text{ are not all zero }\}$
	
	$S_8=\{\langle \alpha_1\nabla_1+\alpha_2\nabla_2+\alpha_3\nabla_3, \nabla_1+\nabla_3+\nabla_5, \nabla_4\rangle \,|\, \alpha_i\in \mathbb{C} \text{ are not all zero }\}$
	
	$S_9=\{\langle\alpha_2\nabla_2+\alpha_3\nabla_3+ \alpha_5\nabla_5, \nabla_1, \nabla_3+\nabla_4\rangle \,|\, \alpha_i\in \mathbb{C} \text{ are not all zero }\}$
	
	$S_{10}=\{\langle\alpha_1\nabla_1+\alpha_3\nabla_3+ \alpha_5\nabla_5, \nabla_2, \nabla_3+\nabla_4\rangle \,|\, \alpha_i\in \mathbb{C} \text{ are not all zero }\}$

	In order to prove the result, we need to show that all elements in $S_1\cup \cdots \cup S_{10} $ lie in one of the orbits $\mathcal P_1, \mathcal P_{2}\dots, \mathcal P_{11}$, and that these orbits are pairwise distinct.

\begin{enumerate}
	\item  $\theta_2=\nabla_1, \theta_3=\nabla_4$.
	
	\begin{enumerate}
	\item Applying the automorphism $\phi$ in $\mathcal P_1=\langle \nabla_2, \nabla_1, \nabla_4 \rangle$ with $x=1,y=0,z=0,w=1,t=0,p=0$, we obtain all elements in $S_1$ with $\alpha_5=\alpha_3=0$, $\alpha_2\neq 0$.
	
	\item Applying the automorphism $\phi$ in $\mathcal P_2=\langle \nabla_3, \nabla_1, \nabla_4 \rangle$ with $x=1,y=0,z=\frac{1}{2}\alpha_{2},w=1,t=0,p=0$, we obtain all elements in $S_1$ with $\alpha_5=0$, $\alpha_3\neq 0$. Simple computations show that this orbit does not contain $\mathcal P_1$.	

	\item \label{III} Applying the automorphism $\phi$ in $\mathcal P_3=\langle \nabla_5, \nabla_1, \nabla_4 \rangle$ with $x=1,y=0,z=0,w=1,t=\alpha_{2},p=0$, we obtain all elements in $S_1$ with $\alpha_5=1$,  and $\alpha_3= 0$.	Moreover, any automorphism $\phi$ applied to this orbit does not contain $\mathcal P_1$ and $\mathcal P_2$. Indeed, let us consider two cases. If $y=0$, then one of the vectors is $\nabla_1$. One of the other vectors is $\nabla_4$ only if $p=0$ and the last vector has nonzero $\nabla_5$ component, and this case is settled. If $y\neq 0$, one of the vectors is $\nabla_4$ only if $p=t=0$. In this case one of the other vectors has nonzero $\nabla_5$ component. In both cases, they cannot be $\mathcal P_1$ and $\mathcal P_2$.

	\item Applying the automorphism $\phi$ in $\mathcal P_4=\langle \nabla_3+\nabla_5, \nabla_1, \nabla_4 \rangle$ with $x={\alpha_{{3}}}^{-1},y=0,z=\frac{1}{2}\alpha_{2},w=\alpha_{{3}},t=0,p=0$, we obtain all elements in $S_1$ with $\alpha_5=1$,  and $\alpha_3\neq 0$. Similar arguments as case (\ref{III}) show that by applying an automorphism $\phi$ to $\mathcal P_4$, so that we obtain vectors $\nabla_1$ and $\nabla_4$, the other vector has   nonzero $\nabla_3$ and $\nabla_5$ components. As a consequece, the orbits generated by $\mathcal P_1, \dots, \mathcal P_4$ are pairwise distinct.
\end{enumerate}

	\item  $\theta_2=\nabla_2, \theta_3=\nabla_4$.	
	
\begin{enumerate}	
	\item $\mathcal P_1$ contains all elements in $S_2$ with $\alpha_5=\alpha_3=0$, and no other elements of $S_2$.
	
	\item\label{6} Applying an automorphism $\phi$ in $\mathcal P_5=\langle \nabla_3, \nabla_2,\nabla_4 \rangle$, we obtain a subspace with $\nabla_4$ as one of the generating vectors only if $y=0$ and $t=\frac{pz}{w}$. In this case, the resulting subspace is $\langle -\beta_1^2\nabla_1+\nabla_3, \beta_1\nabla_1+\nabla_2, \nabla_4 \rangle$. This lies in $S_2$ only if $z=0$, i.e., when the resulting subspace is $\mathcal P_5$ itself. In partiular, the orbit generated by $\mathcal P_5$ does not contain $\mathcal P_1, \dots, \mathcal P_4$. 
	
	\item Applying an automorphism $\phi$ in $\mathcal P_6=\langle \nabla_1+\nabla_3, \nabla_2,\nabla_4 \rangle$, we obtain a subspace with $\nabla_4$ as one of the generating vectors only if $y=0$ and $t=\frac{pz}{w}$. In this case, the resulting subspace is $\langle \alpha_1\nabla_1+\nabla_3, \beta_1\nabla_1+\nabla_2, \nabla_4 \rangle$, with ${{\beta_1}^{2}+\alpha_1}\neq0$. In particular, this orbit contains all elements of $S_2$ with $\alpha_5=0$, $\alpha_3\neq 0$, $\alpha_1\neq0$ and no other element of $S_1\cup S_2$.
	
	\item Applying an automorphism $\phi$ in $\mathcal P_7=\langle \nabla_5, \nabla_2,\nabla_4 \rangle$ with $x=1,y=0,z=0,w=1,t=0,p=\alpha_{{3}}$, we obtain any element of $S_2$ with $\alpha_5=1$, $\alpha_1=0$.
	Moreover, applying an automorphism $\phi$ with $y=0$, we obtain a subspace in $S_1\cup S_2$ only if $t=\frac{pz}{w}$. The resulting subspace lies in $S_2$ only if   $z=0$. And the resulting subspaces are of the form $\langle \alpha_3 \nabla_3+\nabla_5, \nabla_2,\nabla_4 \rangle$, with $\alpha_3\in \mathbb{C}$. Similarly, if $w=0$, we obtain the same subspaces (with the same arguments). If $y\neq 0$ and $w\neq 0$, one of the generating vectors is $\nabla_4$ only if $p=t=0$. In this case, the resulting subspace does not lie in $S_1\cup S_2$.
	
	\item Applying an automorphism $\phi$ in $\mathcal P_8=\langle \nabla_1+ \nabla_5, \nabla_2,\nabla_4 \rangle$ with $x=\alpha_1,y=0,z=0,w=1,t=0,p=\alpha_1\alpha_{{3}}$, we obtain any element of $S_2$ with $\alpha_5=1$, $\alpha_1\neq 0$. Similar arguments as the above case show that the orbit generated by $\mathcal P_8$ does not contain $\mathcal P_1, \dots, \mathcal P_7$. In particular, the subspaces  $\mathcal P_1, \dots, \mathcal P_8$ generate pairwise distinct orbits containing $S_1\cup S_2$.
	
\end{enumerate}

	\item $\theta_2=\nabla_3, \theta_3=\nabla_4$

\begin{enumerate}

	\item $\mathcal P_2$ contains all elements in $S_3$ with $\alpha_5=\alpha_2=0$, and no other elements of $S_3$.
	
	\item $\mathcal P_5$ contains all elements in $S_3$ with $\alpha_1=\alpha_5=0$. In a similar way as proved in $(\ref{6})$, this orbit does not contain other elements of $S_3$.
	
	\item Applying the automorphism $\phi$ in $\mathcal P_6$, with $x=\sqrt \alpha_1,y=0,z=0,w=1,t=0,p=0$, we obtain all elements of $S_3$ with $\alpha_1\neq 0$, $\alpha_2\neq 0$, $\alpha_5=0$.
	
	\item Applying the automorphism $\phi$ in $\mathcal P_3$, with $y=0$, the resulting subspace does not lie in $S_3$. If $y\neq 0$, the resulting subspace lies in $S_3$ only if $p=t=x=0$, and we obtain the subspace $\langle \nabla_5,\nabla_3,\nabla_4\rangle$, i.e., the element of $S_3$ with $\alpha_5=1$, $\alpha_2=\alpha_1=0$.
	
	\item Applying the automorphism $\phi$ in $\mathcal P_9=\langle \nabla_1+\nabla_5,\nabla_3, \nabla_4 \rangle$, with 
	$x=\alpha_{1},y=\frac{\alpha_2}{3},z=0,w=1,t=\frac{\alpha_1\alpha_2}{3},p=\frac{2 \alpha_2^2}{9}$, we obtain all elements in $S_3$ with $\alpha_5=1$, $\alpha_1\neq 0$.
	Moreover, by applying an arbitrary automorphism $\phi$ in $\mathcal P_9$, if $y=0$, the resulting subspace lies in $S_1\cup S_2\cup S_3$ only if $p=t=0$, and in this case it is equal to $\langle \alpha_1\nabla_1+\nabla_5,\nabla_3,\nabla_4 \rangle$, with $\alpha_1\neq0$. If $w=0$, the resulting subspace does not lie in $S_1\cup S_2\cup S_3$. If $y\neq 0$ and $w\neq 0$, the resulting subspace lies in $S_1\cup S_2\cup S_3$ only if $z=0$, $p=\frac{2y^2}{w}$ and $t=\frac{xy}{w}$ and this yields all subspaces of $S_3$ with $\alpha_1\neq0$, and $\alpha_5=1$.
	
	\item Applying the automorphism $\phi$ in $\mathcal P_{4}$ with $x=0,y=\frac{1}{\alpha_2},z=z,w=w,t=-z,p=-2w $, we obtain any element in $S_3$ with $\alpha_1=0$, $\alpha_2\neq 0$ and $\alpha_5=1$. And all elements of $S_1\cup S_2\cup S_3$ are in the distinct orbits generated by the subspaces $\mathcal P_1, \dots, \mathcal P_9$.
	

\end{enumerate}

	\item $\theta_2=\nabla_1+\nabla_3, \theta_3=\nabla_4$
	
\begin{enumerate} 

	\item $\mathcal P_2$ is an element of $S_4$ with $\alpha_2=\alpha_5=0$.
	
	\item By applying the automorphism $\phi$ in $\mathcal P_{6}=\langle \nabla_1+\nabla_3,\nabla_2,\nabla_4 \rangle$ with $x=\sqrt {\alpha_1^{2}+1},y=0,z=\alpha_{1},w=1,t=0,p=0,$ we obtain any element in $S_4$ with $\alpha_5=0$, $\alpha_2=1$ and $\alpha_1^2\neq -1$.
	
	\item By applying the automorphism $\phi$ in $\mathcal P_5$ with $x=w=1$, $y=p=t=0$, we obtain $\langle i\nabla_1+\nabla_2,\nabla_1+\nabla_3,\nabla_4 \rangle$, if $z=i$ and $\langle -i\nabla_1+\nabla_2,\nabla_1+\nabla_3,\nabla_4 \rangle$, if $z=-i$.
	
	
	\item An element of $S_4$ with $\alpha_1=\alpha_2=0$ lies in the orbit generated by $\mathcal P_7$. Indeed, one just need to apply the automorphism $\phi$ with $x=i,y=1,z=-i,w=1,t=0,p=0$ to the later to obtain the former.
	
	\item By applying the automorphism $\phi$ in $\mathcal P_{10} =\langle \nabla_1+\nabla_5, \nabla_1+\nabla_3,\nabla_4 \rangle$, with $x={\frac {\sqrt {3}}{2\alpha_2}},y=\frac{1}{2\alpha_2},z=-\frac{1}{2\alpha_2},w={\frac {\sqrt {3}}{2\alpha_2}},t=\frac{1}{4\alpha_2},p={\frac {\sqrt {3}}{4\alpha_2}}
	$, we obtain any element in $S_4$ with $\alpha_5=1$, $\alpha_2\neq 0$ and $\alpha_1=0$.
	
	By considering $x=-\frac{1}{2\alpha_1},y=-\frac {\sqrt {3}}{2\alpha_1},z={\frac {\sqrt{3}}{2\alpha_1}},w=-\frac{1}{2\alpha_1},t={\frac{\sqrt{3}}{4\alpha_1}},p=-\frac{3}{4\alpha_1}$, we obtain any element in $S_4$ with $\alpha_5=1$, $\alpha_2=0$ and $\alpha_1\neq 0$.
	
	Let us now show that this orbit also contains all elements of $S_4$ with $\alpha_5=1$ and $\alpha_2\neq 0$ $\alpha_1\neq 0$, provided that $\alpha_1^2+\alpha_2^2\neq0$. Indeed by applying $\phi$ in $\mathcal P_{10}$ with variables defined inductively by $t=\frac{y(w^2-y^2)}{y^2+w^2}$, $p=\frac{2wy^2}{y^2+w^2}$,    $z=-y$,  $x=w$, we obtain the subspaces $\langle \alpha_1\nabla_1+\alpha_2\nabla_2+\nabla_5,\nabla_1+\nabla_3,\nabla_4\rangle$, where 
	$\alpha_1=\frac{w(w^2-3y^2)}{(w^2+y^2)^2}$ and $\alpha_2=\frac{y(3w^2-y^2)}{(w^2+y^2)^2}$. Now Lemma \ref{solution} shows that if $\alpha_1^2+\alpha_2^2\neq0$, we can find $y,w\in \mathbb{C}$ satisfying the above equations, and this case is settled.
	
	Now we show that the orbit generated by $\mathcal P_{10}$ does not contain $\mathcal P_1\cup \dots \cup P_9$. To that it is enough to show that this orbit does not intersect $S_1\cup S_2\cup S_3$.
	
	We consider different cases. By applying an automorphism $\phi$ in $\mathcal P_{10}$ with $y=0$, the resulting subspace contains $\nabla_4$ only if $p=t=0$. In this case, the vectors $\nabla_1$, $\nabla_2$ and $\nabla_3$ do not lie in the resulting subspace. As a consequence, the resulting subspace does not lie in $S_1\cup S_2\cup S_3$. Similar arguments apply to the case $w=0$ and to the case  $yw\neq0$, $y^2+w^2\neq0$.
	
	If $yw\neq 0$, $y^2+w^2=0$, the resulting subspace does not contain $\nabla_4$. In particular, it does lie in $S_1\cup S_2\cup S_3$.
	
	\item By applying the automorphism $\phi$ in $\mathcal P_8$, with $x=\alpha_{1},y=i\alpha_{1},z=i,w=1,t=i{\alpha_{1}}^{2},p=-{\alpha_{1}}^{2}$, we obtain any element of $S_4$ with $\alpha_5=1$ and  $\alpha_2= i\alpha_1\neq0$, and with $x=\alpha_{1},y=-i\alpha_{1},z=-i,w=1,t=-i{\alpha_{1}}^{2},p=-{\alpha_{1}}^{2}$, we obtain any element of $S_4$ with $\alpha_5=1$, and $\alpha_2= -i\alpha_1\neq0$.
	
	
\end{enumerate}
	Up to here, we have that any element of $S_1\cup \dots \cup S_4$ is in one and only one of the distinct orbits $\mathcal P_1, \dots, \mathcal P_{10}$.
	
\item $\theta_2=\nabla_5, \theta_3=\nabla_4$

\begin{enumerate} 
	\item By applying $\phi$ in $\mathcal P_7$ with $x=w=1$, $z=\alpha_1$ and $y=t=p=0$, we obtain all elements of $S_5$ with $\alpha_3=0$ and $\alpha_2=1$. The element with $\alpha_3=0$ and $\alpha_2=0$ lies (trivially) in the orbit generated by $\mathcal P_3$.
	
	\item By applying $\phi$ in $\mathcal P_7$ with $x=\alpha_{2}-\sqrt {{\alpha_{2}}^{2}-4\,\alpha_{1}},y=2,z=\alpha_{2}+\sqrt {{\alpha_{2}}^{2}-4\,\alpha_{1}},w=2,t=0,p=0$,
	 we obtain all elements of $S_5$ with $\alpha_3=1$ and $4\alpha_1-\alpha_2^2\neq 0$. 
	
	\item By applying $\phi$ in $\mathcal P_3$ with $x=\alpha_2$, $y=2$, $z=1$, $w=t=p=0$, we obtain all elements of $S_5$ with $\alpha_3=1$ and $4\alpha_1-\alpha_2^2=0$. 
	
\end{enumerate}
	 And we have obtained all elements of $S_5$.
	 
	\item $\theta_2=\nabla_1+\nabla_5, \theta_3=\nabla_4$
	
	\begin{enumerate} 
		\item By applying $\phi$ in $\mathcal P_8$ with $x=w=1$, $z=\alpha_1$ and $y=p=t=0$, we obtain all elements in $\mathcal S_6$ with $\alpha_3=0$ and $\alpha_2=1$. The element with $\alpha_3=0$ and $\alpha_2=0$ lies trivially in $\mathcal P_3$.
		
		\item By applying $\phi$ in $\mathcal P_9$ with $x=4$, $z=\alpha_2$, $w=2$ and $y=t=p=0$, we obtain all elements in $S_6$ with $\alpha_3=1$ and $4\alpha_1-\alpha_2 ^2=0$.
		
		\item By applying $\phi$ in $\mathcal P_{10}$ with $x=\frac{4\alpha_1-\alpha_2^2}{4},y=0,z=\frac{\alpha_2\sqrt{4\alpha_1-\alpha_2^2}}{4},w=\frac{\sqrt{4\alpha_1-\alpha_2^2}}{2},t=0,p=0$, we obtain all elements of $S_6$ with $\alpha_3=1$ and $4\alpha_1-\alpha_2^2\neq0$.

	\end{enumerate}	
	
		\item $\theta_2=\nabla_2+\nabla_5, \theta_3=\nabla_4$
 	
	\begin{enumerate} 
	
\item 
{ By applying $\phi$ in $\mathcal P_{10}$ with $x=0, y=-1, z=\sqrt {\alpha_1}, w=0, t=\sqrt {\alpha_1}, p=0$}, we obtain any element in $S_7$ with $\alpha_3=1$, $\alpha_2=0$ and $\alpha_1\neq 0$. The element of $S_7$ with $\alpha_2=\alpha_1=0$ lies in $\mathcal P_7$.
		
		\item By applying the automorphism $\phi$ in $\mathcal P_8$ with $x=0,y=1,z=\alpha_{2},w=1,t=0,p=1$, we obtain all elements of $S_7$ with $\alpha_3=1$, $\alpha_2\neq0$ and $\alpha_1=0$.
		
		\item We now apply an automorphism $\phi$ in $\mathcal P_{10}$ with $yw\neq 0$ and $y^2+w^2\neq0$. The resulting subspace lies in $S_7$ only if $p=\frac{2wy^2}{y^2+w^2}$, $t=\frac{y(wx+yz)}{y^2+w^2}$, $z=\frac{x(y^2-w^2)}{2yw}$. Moreover, $y$ and $w$ must satisfy $\frac{(3w^2-y^2)y}{y^2+w^2}=1$. With this condition, the resulting subspace equals $\langle \alpha_1\nabla_1 + \alpha_2 \nabla_2 , \nabla_2+\nabla_5, \nabla_4\rangle$. Here, $\alpha_2= {\frac {x (3{y}^{2}-{w}^{2})}{y ({w}^{2}+{y}^{2})}}$ and $\alpha_1={\frac{({w}^{2}+{y}^{2})x^2}{4w^2y^2}}$. In particular, we obtain $x={\frac {\alpha_{2}y \left( {w}^{2}+{y}^{2} \right) }{3{y}^{2
		}-{w}^{2}}}$, and that $y$ and $w$ must satisfy the following system of equations:
	
	\[\left\{\begin{array}{rcl} {\frac {({w}^{2}+{y}^{2})^3 \alpha_{2}^{2}}{4 \left( {w}^{2
			}-3\,{y}^{2} \right) ^{2}{w}^{2}}} & = & \alpha_{1}\\
		\frac{(3w^2-y^2)y}{y^2+w^2} & = & 1
	\end{array}\right.	\]
	
	Now, from Lemma \ref{eq2}, if $\alpha_1\alpha_2\neq0$, the above system of equations has a solution if and only  $4\alpha_1-\alpha_2^2\neq0$. This means we have obtained all elements from $S_7$ with $\alpha_3=1$, $\alpha_1\alpha_2\neq0$ and $4\alpha_1-\alpha_2^2\neq0$.
	
	\item By applying the automorphism $\phi$ in $\mathcal P_9$ with  $x=\frac{-\alpha_2}{3}$, $y=\frac{1}{3}$, $z=\frac{\alpha_2}{2}$, $w=1$, $t=\frac{-\alpha_2}{18}$, $p=\frac{2}{9}$,   we obtain all elements of $S_7$ with $\alpha_3=1$, $\alpha_2\neq0$ and $4\alpha_1=\alpha_2^2$.
	
	\item By applying the automorphism $\phi$ in $\mathcal P_8$ with $x=-\alpha_1$, $y=0$, $z=\alpha_1$, $w=1$, $t=0$, $p=0$, we obtain any element of $S_7$ with $\alpha_3=0$, $\alpha_2=1$ and $\alpha_1\neq0$. The element of $S_7$ with $\alpha_3=0$ and $\alpha_1=0$ lies trivially in $\mathcal P_7$.
	
	\item The element of $S_7$ with $\alpha_3=0$, $\alpha_2=0$, lies in $\mathcal P_3$.
	\end{enumerate}

	\item $\theta_2=\nabla_1+\nabla_3+\nabla_5, \theta_3=\nabla_4$

\begin{enumerate} 
		\item The element of $S_8$ with $\alpha_3=0$, $\alpha_2=0$ lies trivially in $\mathcal P_4$.
		
		\item By applying $\phi$ in $\mathcal P_8$ with $x={\alpha_{1}}^{2}+1,y=0,z=\alpha_{1},w=1,t= \left( {\alpha_{1}}^{2}+1
		\right) \alpha_{1},p={\alpha_{1}}^{2}+1$,  we obtain any element of $S_8$ with $\alpha_3=0$, $\alpha_2=1$ and $\alpha_1^2+1\neq0$. 
		
		\item By applying $\phi$ in $\mathcal P_7$ with $x=1,y=0,z=\pm i,w=1,t=\pm i,p=1$, we obtain all elements of $S_8$ with $\alpha_3=0$, $\alpha_2=1$ and $\alpha_1^2+1=0$.
		
		
		\item\label{d} By applying $\phi$ in $\mathcal P_4$ with $x=-\frac{1}{2}$, $y=\pm\frac{i}{2}$, $z=1$, $w=0$, $t=-1$, $p=0$, we obtain any element of $S_8$ with $\alpha_3=1$, $\alpha_2^2+4=0$ and $4\alpha_1-\alpha_2^2=0$.
		
		\item By applying $\phi$ in $\mathcal P_9$ with $x=1+\frac{\alpha_2^2}{12}$, $y= \frac{-\alpha_2}{3}$, $z=\frac{\alpha_2}{2}$, $w=1$, $t=\frac{\alpha_2(\alpha_2^2-12)}{36}$, $p=\frac{2\alpha_2^2}{9}$, we obtain all elements in $S_8$ with $\alpha_3=1$, $\alpha_2^2+4\neq 0$ and $4\alpha_1-\alpha_2^2=0$.
		

		\item By applying $\phi$ in $\mathcal P_{10}$ with $x=\frac{\alpha_1}{1-\alpha_1}$, $w=\frac{\sqrt{\alpha_1}}{\alpha_1-1}$, $y=z=p=t=0$, we obtain all elements of $S_8$ with $\alpha_3=1$, $\alpha_2=0$ and $\alpha_1\not\in \{0,1\}$. The case $\alpha_1=0$ was treated in the previous case.
		
		\item\label{g} By applying $\phi$ in $\mathcal P_7$ with $x=-i$, $y=1$, $z=i$, $w=1$ and $p=t=0$, we obtain the element of $S_8$ with $\alpha_3=1$, $\alpha_2=0$ and $\alpha_1=1$.
		

		\item By applying $\phi$ in $\mathcal P_{10}$ with $x=\frac{1}{2},y=\frac{1}{\alpha_2},z=\frac{\sqrt{\alpha_2^{2}+4}}{2\alpha_{2}},w=0,t=\frac{\sqrt{\alpha_2^{2}+4}}{2\alpha_{2}},p=0
		$, we obtain all elements of $S_8$ with $\alpha_3=1$, $\alpha_2\neq0$ and $\alpha_1=\frac{\alpha_2^2}{2}+1$.

		\item By applying $\phi$ in $\mathcal P_{10}$ with		
		$x={\frac{({k}^{2}-3)({k}^{2}\alpha_{2}^{2}+{k}^{2}\alpha_{1}-{k}^{2}+\alpha_{2}^{2}-3\alpha_{1}+3) {k}^{2}}{({k}^{2}+1 ) ^{2} ( 3{k}^{2}-1 ) \alpha_{2}^{2}}}$,
		$y={\frac{{k}^{2}({k}^{2}-3)}{\alpha_{2}({k}^{2}+1)^{2}}}$, 
		$z={\frac{k(3-{k}^{2})(-{k}^{4}\alpha_{2}^{2}+2{k}^{4}\alpha_{1}-2\,{k}^{4}-6{k}^{2}\alpha_{1}+6{k}^{2}+\alpha_{2}^{2})}{2({k}^{2}+1)^{2}(3{k}^{2}-1)\alpha_{2}^{2}}}$, 
		$w={\frac{k({k}^{2}-3)}{\alpha_{2}({k}^{2}+1)^{2}}}$,
		$t={\frac{(3-{k}^{2})(-{k}^{4}\alpha_{2}^{2}+2{k}^{4}\alpha_{1}-2{k}^{4}-2{k}^{2}\alpha_2^{2}-8{k}^{2}\alpha_{1}+8{k}^{2}-\alpha_{2}^{2}+6\alpha_{1}-6){k}^{3}}{2\alpha_{2}^{2}(3{k}^{2}-1)({k}^{2}+1)^{3}}}$,
		$p={\frac{2{k}^{3}({k}^{2}-3)}{\alpha_{2}({k}^{2}+1)^{3}}}$, we must have $\det \phi=		
		{\frac { ( {k}^{2}-3 ) ^{3} ( -\alpha_{2}^{2}+2\alpha_{1}-2){k}^{3}}{ 2(3{k}^{2}-1)({k}^{2}+1)^{3}\alpha_{2}^{3}}}		
		\neq0$. In this case, we obtain the subspaces of the form \[\langle f\nabla_1+\alpha_2\nabla_2+\nabla_3, g\nabla_1+\nabla_3+\nabla_5,\nabla_4\rangle,\] where $f$ and $g$ are rational functions in $k$ satisfying $f-\alpha_1=g-1$.
		
		Once we find $k\in \mathbb{C}$, such that $k\not\in \{0,\pm i, \pm \sqrt{3}, \pm \frac{1}{\sqrt{3}}\}$ and $g-1=0$, we obtain $g=1$ and $f=\alpha_1$, i.e., the resulting subspace is 
		\[\langle \alpha_1\nabla_1 + \alpha_2\nabla_2+\nabla_3, \nabla_1+\nabla_3 + \nabla_5,\nabla_4 \rangle. \]
		We will show now that it is possible to find such $k$ if $\alpha_2\neq0$, $2\alpha_1-\alpha_2^2-2\neq 0$, $4\alpha_1-\alpha_2\neq0$ and $\alpha_2^2+(\alpha_1-1)^2\neq0$, i.e., we will obtain all elements of $S_8$ with the above conditions on $\alpha_1$ and $\alpha_2$.
		
		The numerator of the polynomial $g-1$ is the nonzero polynomial
		\begin{eqnarray*}
		G(k) & = & (2\alpha_1-\alpha_2^2-2)^2k^6+(3\alpha_2^4 -12\alpha_1\alpha_2^2 -24\alpha_1^2-24\alpha_2^2+48\alpha_1-24)k^4 + \\ & + &  (3\alpha_2^4 -12\alpha_1\alpha_2^2 +36\alpha_1^2+36\alpha_2^2-72\alpha_1+36)k^2+\alpha_2^2(\alpha_2^2-4\alpha_1).
		\end{eqnarray*}
	
		To finish this case, it is enough to show that $0,\pm i, \pm \sqrt{3}, \pm \frac{1}{\sqrt{3}}$ are not roots of $G$.
		
		First we observe that $0$ is not a root of $G$ since $G(0)=\alpha_2^2(\alpha_2^2-4\alpha_1)$  which is nonzero by hypothesis.
		
		The same with $k=\pm \sqrt{3}$, since $G(\pm \sqrt{3})=64\alpha_2^2(\alpha_2^2-4\alpha_1)$.
		
		Also, $k= \pm \frac{1}{\sqrt{3}}$ are not roots of $G$, since 
		$G(\pm \frac{1}{\sqrt{3}})=\frac{64}{27}(2\alpha_1-\alpha_2^2-2)^2.$
		
		Finally since $G(\pm i)=-64(\alpha_2^2+(\alpha_1-1)^2)$, we obtain that $\pm i$ are not  roots of $G$, and this case is settled.

		\item By applying $\phi$ in $\mathcal P_8$ with $x=-\alpha_{1}+1,y=\pm i(1-\alpha_{1}) ,z=\pm i\alpha_{1},w=1,t=\pm i(\alpha_{1}-1) ^{2},p=- (\alpha_{1}-1 ) ^{2}$ we obtain all elements of $S_8$ with $\alpha_3=1$, $\alpha_2\neq0$, $2\alpha_1-\alpha_2^2-2\neq 0$, $4\alpha_1-\alpha_2\neq0$ and $\alpha_2^2+(\alpha_1-1)^2=0$, for $\alpha_1^2\neq 1$. If $\alpha_1=1$, we have $\alpha_2=0$, and this case have already been considered in (\ref{g}). The case with $\alpha_1=-1$ was dealt with in (\ref{d}).
		
\end{enumerate}

\item $\theta_2=\nabla_1$, $\theta_3=\nabla_3+\nabla_4$.

	\begin{enumerate}
		\item Let $\mathcal P_{11}=\langle \nabla_2,\nabla_1,\nabla_3+\nabla_4\rangle$. This orbit contains the element of $S_9$ with $\alpha_5=\alpha_3=0$. Moreover, the orbit generated by $\mathcal P_{11}$ does not contain any other element of $S_1\cup \cdots \cup S_8$. Indeed, by applying an arbitrary automorphism $\phi$ in $\mathcal P_{11}$, the resulting subspace does not contain the vector $\nabla_4$. As a consequence, the orbits generated by $\mathcal P_{1}, \dots, \mathcal P_{11}$ are pairwise distinct.
		
		\item By applying $\phi$ in $\mathcal{P}_2$ with $x=1, y=0, z=\alpha_2, w=2, t=0, p=-2\alpha_2$, we obtain all elements of $S_9$ with $\alpha_5=0$ and $\alpha_3=1$. 
		
		
		\item By applying $\phi$ in $\mathcal P_9$ with $x=3\alpha_{3},y=-9,z=3,w=0,t=6\alpha_3^{2}+27\alpha_{2},p=18\alpha_{3}$, we obtain all elements of  $S_9$ with $\alpha_5=1$.
\end{enumerate}

	\item $\theta_2=\nabla_2$, $\theta_3=\nabla_3+\nabla_4$.

	\begin{enumerate}

		\item The element of $S_{10}$ with $\alpha_5=\alpha_3=0$ is $\mathcal P_{11}$.
		
		\item The element of $S_{10}$ with $\alpha_5=\alpha_1=0$ is $\mathcal P_{5}$.
		
		\item By applying $\phi$ in $\mathcal P_6$ with $x=\sqrt{\alpha_1}$, $y=z=0$, $w=1$, $t=-\sqrt{\alpha_1}^3$, $p=0$, we obtain all elements of $S_{10}$ with $\alpha_5=0$, $\alpha_3=1$ and $\alpha_1\neq0$.
		
		
		\item By applying $\phi$ in $\mathcal P_8$ with $x=0, y=-1,z=1,w=0,t=0,p=\alpha_3$, we obtain all elements of $S_{10}$ with $\alpha_5=1$ and $\alpha_1=0$.
		
		\item By applying $\phi$ in $\mathcal P_{10}$ with $x=-iz, y=-4z^2,w=4iz^2, t=\frac{z}{2}, p=2z^2(4\alpha_3z+i)$, where $z$ is a cubic root of  $\frac{i}{64\alpha_1}$, we obtain all elements of $S_{10}$ with $\alpha_5=1$ and $\alpha_1\neq 0$.
	\end{enumerate}
\end{enumerate}

All cases have been considered and the lemma is proved.
\end{proof}
 
\subsection{$4$-dimensional central extensions of $\mathfrak{N}_1$}

We may assume that a $4$-dimensional subspace is generated by
$\theta_1, \theta_2$, $\theta_3$ and $\theta_4$,  where $\langle \theta_2, \theta_3, \theta_4 \rangle\in \{\mathcal P_1, \dots, \mathcal P_{11}\}$.

\begin{lemma}
	The following subspaces generate all pairwise distinct orbits.
	
	\begin{multicols}{2}
		\begin{enumerate}[$\mathcal{Q}_1=$]
			\item $\langle \nabla_3, \nabla_2, \nabla_1, \nabla_4\rangle$
			
			\item $\langle \nabla_5, \nabla_2, \nabla_1, \nabla_4\rangle$
			
			\item $\langle \nabla_5, \nabla_3, \nabla_1, \nabla_4 \rangle$
			
			\item $\langle \nabla_1+\nabla_5, \nabla_3, \nabla_2, \nabla_4 \rangle$
			
		\end{enumerate}
		
	\end{multicols}
\end{lemma}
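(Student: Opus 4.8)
The plan is to proceed exactly as in the three preceding lemmas. By the previous lemma every $4$-dimensional subspace $W\subseteq {\rm H}^2(\mathfrak{N}_1,\mathbb{C})$ can be written as $W=\langle\theta_1,\theta_2,\theta_3,\theta_4\rangle$ with $\langle\theta_2,\theta_3,\theta_4\rangle$ one of $\mathcal P_1,\dots,\mathcal P_{11}$. Since ${\rm H}^2(\mathfrak{N}_1,\mathbb{C})$ is $5$-dimensional, a complement of each $\mathcal P_i$ inside it is $2$-dimensional, so modulo $\mathcal P_i$ we may assume $\theta_1$ is a combination of only two of the $\nabla_j$; this produces, for each $i$, a set $S_i$ of candidate subspaces, and the task is to show that every element of $S_1\cup\cdots\cup S_{11}$ lies in one of the orbits of $\mathcal Q_1,\dots,\mathcal Q_4$ and that these four orbits are pairwise distinct. (Here $T_4(\mathfrak{N}_1)$ is all of $G_4$, because no $4$-dimensional subspace can sit inside the $3$-dimensional $\langle\nabla_1,\nabla_2,\nabla_3\rangle$, so there is no admissibility condition to impose.)

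Before the case analysis I would record two structural facts about the action displayed at the start of the section. First, $\langle\nabla_1,\nabla_2,\nabla_3\rangle$ is an ${\rm Aut}(\mathfrak{N}_1)$-invariant subspace (the expressions for $\alpha_4^{*},\alpha_5^{*}$ vanish when $\alpha_4=\alpha_5=0$), on which the block $\left(\begin{smallmatrix}x&y\\z&w\end{smallmatrix}\right)$ acts as on binary quadratic forms. Second, the unipotent subgroup ($x=w=1$, $y=z=0$) is $2$-dimensional and acts by $\nabla_4\mapsto\nabla_4+t\nabla_1+p\nabla_2$, $\nabla_5\mapsto\nabla_5+t\nabla_2+p\nabla_3$, fixing $\nabla_1,\nabla_2,\nabla_3$. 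From the first fact $\dim\bigl(W\cap\langle\nabla_1,\nabla_2,\nabla_3\rangle\bigr)$ is an invariant, equal to $3$ exactly for $\mathcal Q_1$; and when it equals $2$ the number of perfect-square (rank-one) forms inside the $2$-plane $W\cap\langle\nabla_1,\nabla_2,\nabla_3\rangle$ is a further invariant, equal to $2$ for $\mathcal Q_3$ and to $1$ for $\mathcal Q_2$ and $\mathcal Q_4$.

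The bulk of the proof is then the routine case analysis over $\mathcal P_1,\dots,\mathcal P_{11}$. In each case the stabilizer of $\mathcal P_i$ in the $6$-dimensional group ${\rm Aut}(\mathfrak{N}_1)$ is large, so one uses the unipotent parameters $t,p$ to kill the $\nabla_4,\nabla_5$-components of $\theta_1$ whenever possible and then the parameters $x,y,z,w$ to normalize the remaining coefficients, exactly as in the classification of the $3$-dimensional extensions; most cases collapse immediately, and the remaining coincidences are settled by exhibiting explicit automorphisms in the manner of $\mathcal P_1,\dots,\mathcal P_{11}$. The pairwise-inequivalent subspaces that survive are precisely $\mathcal Q_1,\dots,\mathcal Q_4$.

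The main obstacle is distinguishing $\mathcal Q_2=\langle\nabla_5,\nabla_2,\nabla_1,\nabla_4\rangle$ from $\mathcal Q_4=\langle\nabla_1+\nabla_5,\nabla_3,\nabla_2,\nabla_4\rangle$, since they agree on all the invariants above. To separate them I would pass to the dual picture: $G_4$ of a $5$-dimensional space is $\mathbb{P}\bigl({\rm H}^2(\mathfrak{N}_1,\mathbb{C})^{*}\bigr)$, on which ${\rm Aut}(\mathfrak{N}_1)$ acts contragrediently and for which $\langle\nabla_4^{*},\nabla_5^{*}\rangle$ is an invariant subspace. Writing $W=\ker f$, one has $f\in\langle\nabla_3^{*}\rangle$ for $\mathcal Q_2$ and $f=\nabla_1^{*}-\nabla_5^{*}$ for $\mathcal Q_4$; using the $GL_2$-part one normalizes the image of $f$ in ${\rm H}^2(\mathfrak{N}_1,\mathbb{C})^{*}/\langle\nabla_4^{*},\nabla_5^{*}\rangle$ to the rank-one class of $\nabla_1^{*}$ (both images being rank one), after which the residual $t,p$-action only modifies the $\nabla_4^{*}$-coefficient of $f$, while those automorphisms preserving this normalization scale its $\nabla_5^{*}$-coefficient by the nonzero factor $x/w^{2}$. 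Hence ``the $\nabla_5^{*}$-coefficient vanishes'' is a well-defined invariant, and it holds for $\mathcal Q_2$ but fails for $\mathcal Q_4$; this dichotomy completes the proof, every other step being a routine computation of the kind already performed for $\mathcal P_1,\dots,\mathcal P_{11}$.
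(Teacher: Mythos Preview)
Your overall architecture agrees with the paper's: reduce any $4$-dimensional $W$ to $\langle\theta_1\rangle\oplus\mathcal P_i$ with $\theta_1$ ranging over a $2$-parameter family, list the eleven resulting sets $S_1,\dots,S_{11}$, and check that every element lands in one of $\mathcal Q_1,\dots,\mathcal Q_4$. The paper then carries this case analysis out in full, exhibiting an explicit automorphism in every subcase (some of which, e.g.\ the ones for $S_{10}$, are not entirely obvious). You declare this step ``routine'' and do not perform it; that is the only real gap, and it is one of completeness rather than of strategy.

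Where you genuinely diverge from the paper is in separating the four orbits. The paper argues distinctness by ad hoc direct computation (showing, for instance, that no $\phi$ applied to $\mathcal Q_1$ can produce $\nabla_4$ unless $y=0$, etc.), and in fact never explicitly verifies that $\mathcal Q_4$ is distinct from $\mathcal Q_2$. Your approach via invariants --- the ${\rm Aut}$-stable filtration by $\langle\nabla_1,\nabla_2,\nabla_3\rangle$, the count of rank-one forms in $W\cap\langle\nabla_1,\nabla_2,\nabla_3\rangle$, and finally the dual picture $W=\ker f$ with the invariant ``$\nabla_5^{*}$-coefficient of $f$ vanishes after normalizing $\bar f$ to $[\nabla_1^{*}]$'' --- is cleaner and actually fills the gap the paper leaves for $\mathcal Q_2$ versus $\mathcal Q_4$. (A minor slip: the scaling factor on the $\nabla_5^{*}$-coefficient under the stabilizer $\{z=0\}$ is $(xw^{2})^{-1}$ rather than $x/w^{2}$, but this does not affect the zero/nonzero dichotomy.) So: same skeleton as the paper for exhaustiveness, a more conceptual and more complete treatment of pairwise distinctness, but the exhaustiveness step still needs to be written out.
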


\begin{proof}

First, we observe that any orbit is generated by a subspace contained in one of the sets below:

$S_1=\{\langle \alpha_3\nabla_3+\alpha_5\nabla_5, \nabla_2, \nabla_1, \nabla_4\rangle \,|\, \alpha_i\in \mathbb{C} \text{ are not all zero }\}$

$S_2=\{\langle \alpha_2\nabla_2+\alpha_5\nabla_5,\nabla_3, \nabla_1, \nabla_4\rangle \,|\, \alpha_i\in \mathbb{C} \text{ are not all zero }\}$

$S_3=\{\langle \alpha_2\nabla_2+\alpha_3\nabla_3,\nabla_5, \nabla_1, \nabla_4\rangle \,|\, \alpha_i\in \mathbb{C} \text{ are not all zero }\}$

$S_4=\{\langle \alpha_2\nabla_2+\alpha_3\nabla_3,\nabla_3+\nabla_5, \nabla_1, \nabla_4\rangle \,|\, \alpha_i\in \mathbb{C} \text{ are not all zero }\}$

$S_5=\{\langle \alpha_1\nabla_1+\alpha_5\nabla_5,\nabla_3, \nabla_2, \nabla_4\rangle \,|\, \alpha_i\in \mathbb{C} \text{ are not all zero }\}$

$S_6=\{\langle \alpha_1\nabla_1+\alpha_5\nabla_5,\nabla_1+\nabla_3, \nabla_2, \nabla_4\rangle \,|\, \alpha_i\in \mathbb{C} \text{ are not all zero }\}$

$S_7=\{\langle \alpha_1\nabla_1+\alpha_3\nabla_3,\nabla_5, \nabla_2, \nabla_4\rangle \,|\, \alpha_i\in \mathbb{C} \text{ are not all zero }\}$

$S_8=\{\langle \alpha_1\nabla_1+\alpha_3\nabla_3,\nabla_1+\nabla_5, \nabla_2, \nabla_4\rangle \,|\, \alpha_i\in \mathbb{C} \text{ are not all zero }\}$

$S_9=\{\langle \alpha_1\nabla_1+\alpha_2\nabla_2,\nabla_1+\nabla_5, \nabla_3, \nabla_4\rangle \,|\, \alpha_i\in \mathbb{C} \text{ are not all zero }\}$

$S_{10}=\{\langle \alpha_1\nabla_1+\alpha_2\nabla_2,\nabla_1+\nabla_5, \nabla_1+\nabla_3, \nabla_4\rangle \,|\, \alpha_i\in \mathbb{C} \text{ are not all zero }\}$

$S_{11}=\{\langle \alpha_3\nabla_3+\alpha_5\nabla_5, \nabla_1, \nabla_2,\nabla_3+\nabla_4\rangle \,|\, \alpha_i\in \mathbb{C} \text{ are not all zero }\}$

In order to prove the result, we need to show that all elements in $S_1\cup \cdots \cup S_{11} $ lie in one of the orbits $\mathcal Q_1, \mathcal Q_{2}\dots, \mathcal Q_{?}$, and that these orbits are pairwise distinct.

\begin{enumerate}
	\item $\theta_2=\nabla_2, \theta_3=\nabla_1, \theta_4=\nabla_4$
	
	\begin{enumerate}
		\item The element of $S_1$ with $\alpha_5=0$ is $\mathcal Q_1=\langle \nabla_3, \nabla_2, \nabla_1, \nabla_4\rangle$.
		
		\item By applying $\phi$ in $\mathcal Q_2=\langle \nabla_5, \nabla_2, \nabla_1, \nabla_4\rangle$ with $x=1, y=0, z=0, w=1, t=0, p=\alpha_3$, we obtain all elements of $S_1$ with $\alpha_5=1$. 
		
		Moreover $\mathcal Q_1$ and $\mathcal Q_2$ generate distinct orbits. Indeed, by applying an arbitrary $\phi$ in $\mathcal Q_1$, to obtain the vector $\nabla_4$ we must have $y=0$. In this case, the resulting subspace is $\mathcal Q_1$ itself.
	\end{enumerate}
	
	\item $\theta_2= \nabla_3, \theta_3=\nabla_1, \theta_4=\nabla_4$

	\begin{enumerate}
		\item The element of $S_2$  with $\alpha_5=0$ is $\mathcal Q_1$. 
		
		\item By applying $\phi$ in $\mathcal Q_3=\langle \nabla_5, \nabla_3, \nabla_1, \nabla_4 \rangle$, with $x=0,y=1,z=1,w=0,t=-\alpha_2, p=0$, we obtain all elements of $S_2$ with $\alpha_5=1$.
		
		Moreover, the orbit generated by $\mathcal Q_3$ does not contain $\mathcal Q_1$ and $\mathcal Q_2$. Indeed, by applying $\phi$ in $\mathcal Q_3$ with $w=0$ or $y=0$, the resulting subspace does not contain $\nabla_2$, so it can be neither $\mathcal Q_1$ nor $\mathcal Q_2$. Similarly, if $yw\neq 0$ the resulting subspace does not contain $\nabla_1$, and the same conclusion holds. As a consequence, $\mathcal Q_1$, $\mathcal Q_2$ and $\mathcal Q_3$ generate distinct orbits.
	\end{enumerate}
		
	\item $\theta_2= \nabla_5, \theta_3=\nabla_1, \theta_4=\nabla_4$
	
	\begin{enumerate}
		\item The element of $S_3$ with $\alpha_3=0$ is $\mathcal Q_2$. 
		
		\item By applying $\phi$ in $\mathcal Q_3$ with $x=1,y=0,z=\frac{\alpha_2}{2}, w=1, t=p=0$, we obtain all elements of $S_3$ with $\alpha_3=1$.
	\end{enumerate}

	\item $\theta_2=\nabla_3+\nabla_5, \theta_3=\nabla_1, \theta_4=\nabla_4$
	
	\begin{enumerate}
		\item By applying $\phi$ in $\mathcal Q_2$ with $x=1,y=0,z=0,w=1,t=0,p=1$, we obtain the element of $S_4$ with $\alpha_3=0$.
		
		\item By applying $\phi$ in $\mathcal Q_3$ with $x=1,y=0,z=\alpha_2, w=2, t=-2\alpha_2, p=0$, we obtain all elements of $S_4$ with $\alpha_3=1$.
	\end{enumerate}

	\item $\theta_2=\nabla_3, \theta_3=\nabla_2, \theta_4=\nabla_4$

	\begin{enumerate}
		\item The element of $S_5$ with $\alpha_5=0$ is $\mathcal Q_1$.
		
		\item By applying $\phi$ in $\mathcal Q_2$ with $x=0,y=1,z=1,w=0,t=0,p=0$, we obtain the element of $S_5$ with $\alpha_1=0$.
		
		\item By applying $\phi$ in $\mathcal Q_4=\langle \nabla_1+\nabla_5, \nabla_3, \nabla_2, \nabla_4 \rangle$ with $x=\alpha_1, y=0,z=0,w=1,t=0,p=0$, we obtain all elements of $S_5$ with $\alpha_5=1$ and $\alpha_1\neq 0$.
	\end{enumerate}
	
	\item $\theta_2=\nabla_1+\nabla_3, \theta_3=\nabla_2, \theta_4=\nabla_4$
	
	\begin{enumerate}
		\item The element of $S_6$ with $\alpha_5=0$ is $\mathcal Q_1$.
		
		\item By applying $\phi$ in $\mathcal Q_3$ with $x=1,y=1,z=-1,w=1,t=0,p=-2\alpha_{1}$, we obtain all elements of $S_6$ with $\alpha_5=1$.
	\end{enumerate}
	
	\item $\theta_2=\nabla_5, \theta_3=\nabla_2, \theta_4=\nabla_4$
	
	\begin{enumerate}
		\item The element of $S_7$ with $\alpha_3=0$ is $\mathcal Q_2$.
		
		\item By applying $\phi$ in $\mathcal Q_2$ with $x=0,y=1,z=1,w=0,t=0,p=0$, we obtain the element of $S_7$ with $\alpha_1=0$.
		
		\item By applying $\phi$ in $\mathcal Q_3$ with $x=\sqrt{\alpha_1}, y=1, z=-\sqrt{\alpha_1},w=1, t=0, p=0$, we obtain all elements of $S_7$ with $\alpha_3=1$ and $\alpha_1\neq 0$.
	\end{enumerate}

	\item $\theta_2=\nabla_1+\nabla_5, \theta_3=\nabla_2, \theta_4=\nabla_4$

	\begin{enumerate}
		\item The element of $S_8$ with $\alpha_3=0$ is $\mathcal Q_2$.
		
		\item The element of $S_8$ with $\alpha_1=0$ is $\mathcal Q_4$.
		
		\item By applying $\phi$ in $\mathcal Q_3$ with $x=-i\sqrt {\alpha_1},y=i,z=\sqrt {\alpha_1},w=1,t=0,p=\frac {2i}{\sqrt {\alpha_1}}$, we obtain all elements of $S_8$ with $\alpha_3=1$ and $\alpha_1\neq0$.
	\end{enumerate}

	\item $\theta_2=\nabla_1+\nabla_5, \theta_3=\nabla_3, \theta_4=\nabla_4$
		
	\begin{enumerate}
		\item The element with $\alpha_2=0$ in $S_9$ is $\mathcal Q_3$.
		
		\item The element with $\alpha_1=0$ in $S_9$ is $\mathcal Q_4$.
		
		\item By applying $\phi$ in $\mathcal Q_3$ with $x=2\alpha_1, y=1, z=0, w=1, t=-2, p=-\frac{2}{\alpha_1}$, we obtain any element of $S_9$ with $\alpha_2=1$ and $\alpha_1\neq 0$. 
	\end{enumerate}	

	\item $\theta_2=\nabla_1+\nabla_5, \theta_3=\nabla_1+\nabla_3, \theta_4=\nabla_4$

	\begin{enumerate}
		\item The element of $S_{10}$ with $\alpha_2=0$ is $\mathcal Q_3$.
		
	\item By applying $\phi$ in $\mathcal Q_3$ with $x=-1$, $y=z=\alpha_1+\sqrt{\alpha_1^2+1}$, $w=1$, $t=\alpha_1 p$ and $p=\frac{1+\left(\alpha_1+\sqrt{\alpha_{1}^2+1}\right)^2}{\alpha_1^2+1}$, we obtain all elements of $S_{10}$ with $\alpha_2=1$ and $\alpha_1^2+1\neq0$;
		
		\item By applying $\phi$ in $\mathcal Q_4$ with $x=1, y=0, z=\pm i, w=1, t=0, p=0$, we obtain the elements of $S_{10}$ with $\alpha_2=1$ and $\alpha_1^2+1=0$;
	\end{enumerate}

	\item $\theta_2=\nabla_2, \theta_3=\nabla_1, \theta_4=\nabla_3+\nabla_4$
	
	\begin{enumerate}
		\item The element of $S_{11}$ with $\alpha_5=0$ is $\mathcal Q_1$.
		
		\item By applying $ \phi$ in $\mathcal Q_4$ with $x=0, y=-1, z=1, w=0, t=0, p=\alpha_3$, we obtain all elements of $S_{11}$ with $\alpha_5=1$.
	\end{enumerate}

\end{enumerate}

	All cases have been considered and the lemma is proved.
\end{proof}

\subsection{$5$-dimensional central extensions of $\mathfrak{N}_1$}
 There is only one $5$-dimensional central extension defined by 
 \[\langle \nabla_1, \nabla_2, \nabla_3, \nabla_4,\nabla_5 \rangle.\]


\subsection{Classification theorem}
Summarizing all results regarding to classification of distinct orbits, 
we have the classification of all central extensions of the algebra $\mathfrak{N}_1.$
Note that, we are interested only in non-trivial central extensions,
which are non-split and cannot be considered as a central extension of an algebra of smaller dimension than $\mathfrak{N}_1.$

\begin{theorem}
Let $[\mathfrak{N}_1]^i$ be an $i$-dimensional non-trivial central extension of the Zinbiel algebra $\mathfrak{N}_1.$ Then $[\mathfrak{N}_1]^i$ is isomorphic to one algebra from the following list:

\begin{enumerate}
    \item if $i=1:$
    \begin{longtable}{lllllll}

$[\mathfrak{N}_1]^1_{01}$ &$:$& $e_1e_2=  e_3$&$e_1e_3=e_4$
&$ e_2e_1= -e_3$ &$e_2e_2=e_4$ 
\\

$[\mathfrak{N}_1]^1_{02}$ &$:$& $e_1e_2=  e_3$&$e_1e_3=e_4$
&$ e_2e_1= -e_3$ \\

    \end{longtable}
 
    \item if $i=2:$
    \begin{longtable}{llllllllll}

$[\mathfrak{N}_1]^2_{01}$ &$:$& $e_1e_1=e_4$& $e_1e_2=  e_3$&$e_1e_3=e_5$  &$ e_2e_1= -e_3$ \\

$[\mathfrak{N}_1]^2_{02}$ &$:$& $e_1e_2=  e_3+e_4$&$e_1e_3=e_5$  &$ e_2e_1= -e_3$ \\

$[\mathfrak{N}_1]^2_{03}$ &$:$& $e_1e_2=  e_3$  &$ e_2e_1= -e_3$ &$e_1e_3=e_5$&$e_2e_2=e_4$\\

$[\mathfrak{N}_1]^2_{04}$ &$:$&$e_1e_1=e_4$& $e_1e_2=  e_3$  &$e_1e_3=e_5$&$ e_2e_1= -e_3$ &$e_2e_2=e_4$\\

$[\mathfrak{N}_1]^2_{05}$ &$:$& $e_1e_2=  e_3$ &$e_1e_3=e_5$ &$ e_2e_1= -e_3$ &$e_2e_3=e_4$ \\

$[\mathfrak{N}_1]^2_{06}$ &$:$& $e_1e_1=e_4$&$e_1e_2=  e_3$ &$e_1e_3=e_5$ &$ e_2e_1= -e_3$ &$e_2e_3=e_4$\\

$[\mathfrak{N}_1]^2_{07}$ &$:$& $e_1e_2=  e_3+e_4$ &$e_1e_3=e_5$ &$ e_2e_1= -e_3$&$e_2e_3=e_4$ \\

$[\mathfrak{N}_1]^2_{08}$ &$:$&$e_1e_1=e_4$& $e_1e_2=  e_3$ &$e_1e_3=e_5$ &$ e_2e_1= -e_3$ &$e_2e_2=e_4$&$e_2e_3=e_4$\\

$[\mathfrak{N}_1]^2_{09}$ &$:$&$e_1e_1=e_4$& $e_1e_2=  e_3$ &$e_1e_3=e_5$ &$ e_2e_1= -e_3$&$e_2e_2=e_5$ \\

$[\mathfrak{N}_1]^2_{10}$ &$:$&  $e_1e_2=  e_3+e_4$ &$e_1e_3=e_5$ &$ e_2e_1= -e_3$&$e_2e_2=e_5$

    \end{longtable}

    \item if $i=3:$
    \begin{longtable}{llllllllll}

$[\mathfrak{N}_1]^3_{01}$ &$:$  
&$e_1e_1=e_5$& $e_1e_2=  e_3+e_4$ &$e_1e_3=e_6$&$ e_2e_1= -e_3$ \\

$[\mathfrak{N}_1]^3_{02}$ &$:$  
&$e_1e_1=e_5$& $e_1e_2=  e_3$ &$e_1e_3=e_6$&$ e_2e_1= -e_3$ &$e_2e_2=e_4$\\

$[\mathfrak{N}_1]^3_{03}$ &$:$  
&$e_1e_1=e_5$& $e_1e_2=  e_3$ &$e_1e_3=e_6$&$ e_2e_1= -e_3$ &$e_2e_3=e_4$ \\

$[\mathfrak{N}_1]^3_{04}$ &$:$  
&$e_1e_1=e_5$& $e_1e_2=  e_3$ &$e_1e_3=e_6$&$ e_2e_1= -e_3$ &$e_2e_2=e_4$&$e_2e_3=e_4$ \\

$[\mathfrak{N}_1]^3_{05}$ &$:$  &
$e_1e_2=  e_3+e_5$&$e_1e_3=e_6$ &$ e_2e_1= -e_3$ &$e_2e_2=e_4$ \\

$[\mathfrak{N}_1]^3_{06}$ &$:$  &$e_1e_1=e_4$&
$e_1e_2=  e_3+e_5$&$e_1e_3=e_6$ &$ e_2e_1= -e_3$ &$e_2e_2=e_4$ \\

$[\mathfrak{N}_1]^3_{07}$ &$:$  &
$e_1e_2=  e_3+e_5$&$e_1e_3=e_6$ &$ e_2e_1= -e_3$ &$e_2e_3=e_4$ \\

$[\mathfrak{N}_1]^3_{08}$ &$:$  &$e_1e_1=e_4$&
$e_1e_2=  e_3+e_5$&$e_1e_3=e_6$ &$ e_2e_1= -e_3$ &$e_2e_3=e_4$ \\

$[\mathfrak{N}_1]^3_{09}$ &$:$  &$e_1e_1=e_4$&
$e_1e_2=  e_3$ &$e_1e_3=e_6$&$ e_2e_1= -e_3$&$e_2e_2=e_5$ &$e_2e_3=e_4$\\

$[\mathfrak{N}_1]^3_{10}$ &$:$  &$e_1e_1=e_4+e_5$&
$e_1e_2=  e_3$ &$e_1e_3=e_6$&$ e_2e_1= -e_3$&$e_2e_2=e_5$&$e_2e_3=e_4$ \\

$[\mathfrak{N}_1]^3_{11}$ &$:$  &$e_1e_1=e_5$&
$e_1e_2=  e_3+e_4$&$e_1e_3=e_6$ &$ e_2e_1= -e_3$&$e_2e_2=e_6$ \\
 
    \end{longtable}

    \item if $i=4:$
    \begin{longtable}{llllllllll}

$[\mathfrak{N}_1]^4_{01}$ &$:$  
& $e_1 e_1=e_6$ &$e_1e_2=  e_3+e_5$&$e_1 e_3=e_7$ &$ e_2e_1= -e_3$ &$e_2 e_2=e_4$\\

$[\mathfrak{N}_1]^4_{02}$ &$:$  
&$e_1 e_1=e_6$ & $e_1e_2=  e_3+e_5$&$e_1 e_3=e_7$ &$ e_2e_1= -e_3$  &$e_2 e_3=e_4$ \\

$[\mathfrak{N}_1]^4_{03}$ &$:$  
&$e_1 e_1=e_6$ & $e_1e_2=  e_3$ &$e_1 e_3=e_7$ &$ e_2e_1= -e_3$ &$e_2 e_2=e_5$ &$e_2 e_3=e_4$ \\

$[\mathfrak{N}_1]^4_{04}$ &$:$  
&$e_1 e_1=e_4$ & $e_1e_2=  e_3+e_6$ &$e_1 e_3=e_7$ &$ e_2e_1= -e_3$&$e_2 e_2=e_5$ &$e_2 e_3=e_4$

    \end{longtable}
 
    \item if $i=5:$
    \begin{longtable}{lllllllllll}

$[\mathfrak{N}_1]^5_{01}$ &$:$&$e_1e_1=e_4$& 
$e_1e_2=  e_3+e_5$ &$e_1e_3=e_7$ &$ e_2e_1= -e_3$  &$e_2e_2=e_6$&$e_2e_3=e_8$\\

    \end{longtable}
 
\end{enumerate}
\end{theorem}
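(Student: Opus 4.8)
The plan is to run the Skjelbred--Sund \emph{Procedure} recalled in Section~1, with base algebra ${\bf A}=\mathfrak{N}_1$. Step~(1) of that procedure --- computing $\operatorname{Aut}(\mathfrak{N}_1)$, the space ${\rm H}^2(\mathfrak{N}_1,\mathbb{C})=\langle\nabla_1,\dots,\nabla_5\rangle$, and the induced action on cocycles --- has already been carried out at the start of this section. In particular $\dim{\rm H}^2(\mathfrak{N}_1,\mathbb{C})=5$, so $T_i(\mathfrak{N}_1)\subseteq G_i({\rm H}^2(\mathfrak{N}_1,\mathbb{C}))$ is nonempty only for $1\le i\le 5$, and by the key Lemma of Section~1 together with the isomorphism lemma recalled there (proved as \cite[Lemma 17]{hac16}), classifying the $i$-dimensional non-split central extensions of $\mathfrak{N}_1$ amounts to listing the $\operatorname{Aut}(\mathfrak{N}_1)$-orbits on $T_i(\mathfrak{N}_1)$. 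Step~(2) --- the determination of these orbits --- is exactly the content of the four lemmas already proved in this section: representatives are $\langle\nabla_4\rangle$ and $\langle\nabla_3+\nabla_4\rangle$ for $i=1$; $\mathcal{O}_1,\dots,\mathcal{O}_{10}$ for $i=2$; $\mathcal{P}_1,\dots,\mathcal{P}_{11}$ for $i=3$; $\mathcal{Q}_1,\dots,\mathcal{Q}_4$ for $i=4$; and the whole space ${\rm H}^2(\mathfrak{N}_1,\mathbb{C})$ (on which $\operatorname{Aut}(\mathfrak{N}_1)$ acts trivially) for $i=5$.

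The remaining work is Step~(3). For each orbit representative $W=\langle[\theta_1],\dots,[\theta_i]\rangle$ one forms the algebra ${\bf A}_\theta$ with $\theta=\sum_{j=1}^i\theta_j e_j$ and reads off its multiplication table from the dictionary $\nabla_1=[\Delta_{11}]$, $\nabla_2=[\Delta_{12}]$, $\nabla_3=[\Delta_{22}]$, $\nabla_4=[\Delta_{13}]$, $\nabla_5=[\Delta_{23}]$, together with the products $e_1e_2=e_3$, $e_2e_1=-e_3$ of $\mathfrak{N}_1$; the new basis vectors $e_4,\dots,e_{3+i}$ record the images of $\theta_1,\dots,\theta_i$. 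Carrying this out for each representative produces precisely the multiplication tables displayed in the statement. The isomorphism lemma then guarantees that two such algebras are isomorphic if and only if the corresponding subspaces lie in the same $\operatorname{Aut}(\mathfrak{N}_1)$-orbit, so the pairwise distinctness of the $\mathcal{O}_k$, $\mathcal{P}_k$, $\mathcal{Q}_k$ established in the lemmas yields that the listed algebras are pairwise non-isomorphic.

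It then remains to observe that every listed algebra is genuinely \emph{non-trivial}, i.e.\ non-split and not a central extension of any Zinbiel algebra of dimension $<3$. Both are automatic here: since ${\rm Ann}(\mathfrak{N}_1)=\langle e_3\rangle$ is one-dimensional and membership of $W$ in $T_i(\mathfrak{N}_1)$ forces $e_3\notin{\rm Ann}(\theta)$, the formula ${\rm Ann}({\bf A}_\theta)=({\rm Ann}(\theta)\cap{\rm Ann}(\mathfrak{N}_1))\oplus\mathbb{V}=\mathbb{V}$ shows ${\bf A}_\theta/{\rm Ann}({\bf A}_\theta)\cong\mathfrak{N}_1$, which has dimension $3$; hence ${\bf A}_\theta$ cannot be a central extension of a lower-dimensional algebra, and since $[\theta_1],\dots,[\theta_i]$ span an $i$-dimensional subspace of ${\rm H}^2(\mathfrak{N}_1,\mathbb{C})$ they are linearly independent, so ${\bf A}_\theta$ has no annihilator component. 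Assembling the $2+10+11+4+1$ algebras obtained in this way gives the list in the statement.

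Because the genuinely difficult orbit computations are entirely encapsulated in the four lemmas already established, the only step in the proof of the theorem itself that needs care is the purely clerical translation: matching each abstract orbit representative $\langle\nabla_4\rangle,\langle\nabla_3+\nabla_4\rangle,\mathcal{O}_k,\mathcal{P}_k,\mathcal{Q}_k$ to its explicit multiplication table over $e_1,\dots,e_{3+i}$, and verifying against the lemmas that the resulting list is exhaustive (no orbit is omitted) and irredundant (no two representatives give isomorphic algebras), so that the final list records exactly one algebra per isomorphism class of non-trivial central extension of $\mathfrak{N}_1$.
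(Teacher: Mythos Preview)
Your proposal is correct and follows exactly the approach of the paper: the theorem is obtained by summarizing the orbit classifications carried out in the four lemmas of this section and translating each representative into its multiplication table via the Skjelbred--Sund procedure. The paper's own ``proof'' is in fact even more terse than yours --- it consists only of the sentence ``Summarizing all results regarding to classification of distinct orbits, we have the classification of all central extensions of the algebra $\mathfrak{N}_1$'' --- so your write-up actually makes the logical dependence on the lemmas and the non-triviality check more explicit than the original.
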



\begin{thebibliography}{99}

\bibitem{ack}
Abdelwahab H.,  Calder\'on A.J., Kaygorodov I.,
The algebraic and geometric classification of nilpotent binary Lie algebras, 
International Journal of Algebra and Computation,     29 (2019), 6, 1113--1129.

\bibitem{abash}
Adashev J., Camacho L., Gomez-Vidal S., Karimjanov I., 
Naturally graded Zinbiel algebras with nilindex $n-3,$ 
Linear Algebra and its Applications, 443 (2014), 86--104.

\bibitem{omirov}
Adashev J.,  Camacho L.,  Omirov B., 
Central extensions of null-filiform and naturally graded filiform non-Lie Leibniz algebras, 
Journal of Algebra, 479 (2017), 461--486. 

\bibitem{adashev}
Adashev J., Khudoyberdiyev A. Kh., Omirov B. A., 
Classifications of some classes of Zinbiel algebras,
Journal of Generalized Lie Theory and Applications, 4 (2010), 10 pages.

\bibitem{adashev19} 
Adashev J., Ladra M., Omirov B., 
The classification of naturally graded Zinbiel algebras with characteristic sequence equal to $(n-p,p)$,  
Ukrainian Mathematical Journal, 71 (2019),  7, 867--883.

\bibitem{bkk}
Bauerle G.G.A., de Kerf E.A.,  ten Kroode A.P.E., 
Lie Algebras. Part 2. Finite and Infinite Dimensional Lie Algebras and Applications in Physics, edited and with a preface by E.M. de Jager, Studies in Mathematical Physics, vol. 7, North-Holland Publishing Co., Amsterdam, ISBN 0-444-82836-2, 1997, x+554 pp


\bibitem{brem}
Bremner M., 
On Tortkara triple systems, 
Communications in Algebra, 46 (2018),  6, 2396--2404. 

\bibitem{cfk182}
Calder\'on Mart\'{i}n A., Fern\'andez Ouaridi A., Kaygorodov I., 
The classification of $n$-dimensional anticommutative algebras with $(n-3)$-dimensional annihilator, 
Communications in Algebra, 47 (2019), 1, 173--181.


\bibitem{cfk20}
Calder\'on Mart\'{i}n A., Fern\'andez Ouaridi A., Kaygorodov I., 
The classification of 2-dimensional rigid algebras,
Linear and Multilinear Algebra, 68 (2020),  4, 828--844.


\bibitem{cfk18}
Calder\'on Mart\'{i}n A., Fern\'andez Ouaridi A., Kaygorodov I., 
   On the classification of bilinear maps with   radical of a fixed codimension,  
 Linear and Multilinear Algebra, 2020, DOI:10.1080/03081087.2020.1849001



\bibitem{cam13}
Camacho L.,  Ca\~nete E., G\'omez-Vidal S., Omirov B., 
$p$-filiform Zinbiel algebras, 
Linear Algebra and its Applications, 438 (2013), 7, 2958--2972. 

\bibitem{cam19}
Camacho L., Kaygorodov I., Lopatkin V., Salim M.,
The variety of dual Mock-Lie algebras, 
Communications in Mathematics, 28 (2020), 2, 161--178.

\bibitem{cam20}
Camacho L., Karimjanov I., Kaygorodov I., Khudoyberdiev A.,
One-generated nilpotent Novikov algebras,
Linear and Multilinear Algebra, 2020, DOI: 10.1080/03081087.2020.1725411

\bibitem{ckkk20}
Camacho L., Karimjanov I., Kaygorodov I., Khudoyberdiev A.,
Central extensions of filiform Zinbiel algebras,
Linear and Multilinear Algebra, 2020, DOI: 10.1080/03081087.2020.1764903

\bibitem{degr3}
Cical\`{o} S., De Graaf W.,   Schneider C., 
Six-dimensional nilpotent Lie algebras,
Linear Algebra and its Applications, 436 (2012), 1, 163--189. 


\bibitem{usefi1}
Darijani I., Usefi H., 
The classification of 5-dimensional $p$-nilpotent restricted Lie algebras over perfect fields, I.,
Journal of Algebra, 464 (2016), 97--140. 


\bibitem{degr1}
De Graaf W., 
Classification of nilpotent associative algebras of small dimension, 
International Journal of Algebra and Computation, 28 (2018),  1, 133--161

\bibitem{degr2}
De Graaf W., 
Classification of 6-dimensional nilpotent Lie algebras over fields of characteristic not 2, 
Journal of Algebra, 309  (2007), 2, 640--653.



\bibitem{tortnew1}
Diehl J.,  Ebrahimi-Fard K.,  Tapia N.,
    Time warping invariants of multidimensional time series,
    Acta Applicandae Mathematicae, 170 (2020), 265--290. 


\bibitem{tortnew2}
Diehl J.,  Lyons T.,  Preis R.,  Reizenstein J.,
    Areas of areas generate the shuffle algebra,  arXiv:2002.02338




\bibitem{dok}
Dokas I., 
Zinbiel algebras and commutative algebras with divided powers,
Glasgow Mathematical Journal, 52 (2010), 2, 303--313.


\bibitem{dzhuma}
Dzhumadildaev A., 
Zinbiel algebras under $q$-commutators, 
Journal of Mathematical Sciences (New York), 144 (2007), 2, 3909--3925.

\bibitem{dzhuma5}
Dzhumadildaev A., Tulenbaev K., 
Nilpotency of Zinbiel algebras, 
Journal of Dynamical and Control Systems, 11 (2005), 2, 195--213.


\bibitem{dzhuma19}
Dzhumadildaev A., Ismailov N., Mashurov F.,
On the speciality of Tortkara algebras,
Journal of Algebra, 540 (2019), 1--19.

\bibitem{fkkv19}
Fern\'andez Ouaridi A.,  Kaygorodov I.,  Khrypchenko M., Volkov Yu., 
Degenerations of nilpotent algebras,
arXiv:1905.05361.


 


\bibitem{gkk}
Gorshkov I., Kaygorodov I., Khrypchenko M.,
The algebraic classification of nilpotent Tortkara algebras,  Communications in Algebra, 48 (2020), 8, 3608--3623.

\bibitem{gkp}
Gorshkov I., Kaygorodov I., Khrypchenko M.,
    The geometric classification of nilpotent Tortkara algebras,
    Communications in Algebra,  48 (2020),  1,  204--209.



\bibitem{ha16}
Hegazi A., Abdelwahab H., 
Classification of five-dimensional nilpotent Jordan algebras,
Linear Algebra and its Applications, 494 (2016), 165--218.


\bibitem{ha16nm}
Hegazi A., Abdelwahab H., 
Is it possible to find for any $n,m \in \mathbb N$ a Jordan algebra of nilpotency type $(n,1,m)$?, 
Beitr\"{a}ge zur Algebra und Geometrie, 57 (2016), 4, 859--880.

\bibitem{ha17}
Hegazi A., Abdelwahab H., 
The classification of $n$-dimensional non-associative Jordan algebras with $(n-3)$-dimensional annihilator,
Communications in Algebra, 46 (2018), 2, 629--643.

\bibitem{hac16}
Hegazi A., Abdelwahab H., Calder\'{o}n Mart\'{i}n A., 
The classification of $n$-dimensional non-Lie Malcev algebras with $(n-4)$-dimensional annihilator, 
Linear Algebra and its Applications, 505 (2016), 32--56. 

\bibitem{hac18}
Hegazi A., Abdelwahab H., Calder\'{o}n Mart\'{i}n A., 
Classification of nilpotent Malcev algebras of small dimensions over arbitrary fields of characteristic not 2,
Algebras and Representation Theory, 21 (2018), 1, 19--45. 

\bibitem{ikm19}
Ismailov N., Kaygorodov I.,  Mashurov F.,
    The algebraic and geometric classification of nilpotent assosymmetric algebras, 
    Algebras and Representation Theory, 24 (2021),  1, 135--148.

\bibitem{kkk18}
Karimjanov I., Kaygorodov I.,  Khudoyberdiyev A.,
The algebraic and geometric classification of nilpotent Novikov algebras, 
Journal of Geometry and Physics,   143 (2019), 11--21.


\bibitem{kkl18}
Karimjanov I., Kaygorodov I.,	Ladra M.,
Central extensions of filiform associative algebras, 
Linear and Multilinear Algebra,   69 (2021), 6,  1083--1101.

\bibitem{kkl19}
Kaygorodov I., Khrypchenko M., Lopes S.,  
    The algebraic and geometric classification of nilpotent anticommutative  algebras, 
    Journal of Pure and Applied Algebra,  224  (2020), 8, 106337. 


\bibitem{kkp20}
Kaygorodov I., Khrypchenko M., Popov Yu., 
The algebraic and geometric classification of nilpotent terminal algebras, Journal of Pure and Applied Algebra,  225 (2021), 6, 106625.



\bibitem{kks19}
 Kaygorodov I., Khudoyberdiev  A., Sattarov A.,  
    One-generated nilpotent terminal  algebras,     
    Communications in Algebra, 48 (2020), 10, 4355--4390. 

\bibitem{klp20}
Kaygorodov I., Lopes S., P\'{a}ez-Guill\'{a}n P.,  
    Non-associative central extensions of null-filiform associative algebras, 
    Journal of Algebra, 560 (2020), 1190--1210.
    
\bibitem{kpv19}
Kaygorodov I., P\'{a}ez-Guill\'{a}n  P., Voronin V.,  
The algebraic and geometric classification of nilpotent bicommutative algebras,
  Algebras and Representation Theory, 23 (2020), 6, 2331--2347.



\bibitem{kppv}
Kaygorodov I.,  Popov Yu., Pozhidaev A., Volkov Yu., 
    Degenerations of Zinbiel and nilpotent Leibniz algebras, 
    Linear and Multilinear Algebra,   66 (2018), 4, 704--716.




\bibitem{krs19}
Kaygorodov I.,  Rakhimov I.,  Said Husain Sh. K., 
    The algebraic  classification of nilpotent  associative commutative algebras,
    Journal of Algebra and its Applications, 19 (2020), 11, 2050220.
  



\bibitem{loday}
Loday J.-L., 
Cup-product for Leibniz cohomology and dual Leibniz algebras, 
Mathematica Scandinavica, 77 (1995), 2, 189--196.


\bibitem{mukh}
Mukherjee G.,  Saha R., 
Cup-product for equivariant Leibniz cohomology and Zinbiel algebras, 
Algebra Colloquium, 26 (2019), 2, 271--284.


\bibitem{anau}
Naurazbekova A., 
On the structure of free dual Leibniz algebras, 
Eurasian Mathematical Journal, 10 (2019), 3,  40--47.

\bibitem{ualbay}
Naurazbekova A., Umirbaev U., 
Identities of dual Leibniz algebras, 
TWMS J. Pure Appl. Math., 1 (2010),  1, 86--91.


\bibitem{saha}
Saha R.,
    Cup-product in Hom-Leibniz cohomology and Hom-Zinbiel algebras,
    Communications in Algebra, 48 (2020), 10, 4224--4234. 


\bibitem{ss78}
Skjelbred T., Sund T., 
Sur la classification des algebres de Lie nilpotentes, 
C. R. Acad. Sci. Paris Ser. A-B, 286 (1978), 5, A241--A242.

 
\bibitem{yau}
Yau D., 
Deformation of dual Leibniz algebra morphisms, 
Communications in Algebra, 35 (2007),  4, 1369--1378. 


\bibitem{zusmanovich}
Zusmanovich P., 
Central extensions of current algebras, 
Transactions of the American Mathematical Society,  334 (1992),  1, 143--152. 


\end{thebibliography}
\end{document}